\documentclass[reqno, 11pt]{amsart}
%\input psfig.sty 

% !TeX spellcheck = en_GB

\setlength{\voffset}{-1,5cm}
\setlength{\hoffset}{-.5cm}
\setlength{\oddsidemargin}{1cm}
\setlength{\evensidemargin}{1cm}
\setlength{\textwidth}{160mm}

\parindent=20pt

\setlength{\textheight}{220mm}
\setlength{\footskip}{35pt}
\setlength{\headsep}{40pt}
\setlength{\headheight}{0pt}

\frenchspacing
\usepackage{amsmath}
\usepackage{amsthm}
\usepackage{amssymb}
\usepackage{dsfont}
\usepackage{graphicx}
\usepackage{caption}
\usepackage{subcaption}
\usepackage{color}
\usepackage[english]{babel}
\usepackage{layout}
\usepackage{float}

\theoremstyle{plain}
\begingroup
\newtheorem{theorem}{Theorem}[section]
\newtheorem{lemma}[theorem]{Lemma}
\newtheorem{proposition}[theorem]{Proposition}
\newtheorem{corollary}[theorem]{Corollary}
\endgroup

\theoremstyle{definition}
\begingroup

\newtheorem{remark}[theorem]{Remark}

\endgroup

\theoremstyle{remark}

\mathsurround=1pt
\mathchardef\emptyset="001F

\numberwithin{equation}{section}

 % "letter-numbered" theorems

\newcommand{\e}{\varepsilon}

\newcommand{\dist}{{\rm{dist}}}

\font\tenmsb=msbm10
\font\sevenmsb=msbm7
\font\fivemsb=msbm5
\newfam\msbfam
\textfont\msbfam=\tenmsb
\scriptfont\msbfam=\sevenmsb
\scriptscriptfont\msbfam=\fivemsb

\def\R{\mathbb{R}}

\def\w{\omega}

\def\a{\alpha}

%%%%%%%%%%%%%%%%%%%%%%%%%%%%%%%%%%%%%

%%%%%%%%%%%%%%%%% End Macros  %%%%%%%%%%%%%%%%%

\begin{document}

\author{Marco Cicalese}
\address[Marco Cicalese]{Zentrum Mathematik - M7, Technische Universit\"at M\"unchen, Boltzmannstrasse 3, 85747 Garching, Germany}
\email{cicalese@ma.tum.de}

\author{Matthias Ruf}
\address[Matthias Ruf]{Zentrum Mathematik - M7, Technische Universit\"at M\"unchen, Boltzmannstrasse 3, 85747 Garching, Germany}
\email{mruf@ma.tum.de}

\author{Francesco Solombrino}
\address[Francesco Solombrino]{Zentrum Mathematik - M7, Technische Universit\"at M\"unchen, Boltzmannstrasse 3, 85747 Garching, Germany}
\email{francesco.solombrino@ma.tum.de}

\title{On global and local minimizers of prestrained thin elastic rods}

\begin{abstract}
We study the stable configurations of a thin three-dimensional weakly prestrained rod subject to a terminal load as the thickness of the section vanishes. By $\Gamma$-convergence we derive a one-dimensional limit theory and show that isolated local minimizers of the limit model can be approached by local minimizers of the three-dimensional model. In the case of isotropic materials and for two-layers prestrained three-dimensional models the limit energy  further simplifies to that of a Kirchhoff rod-model of an intrinsically curved beam.  In this case we study the limit theory and investigate global and/or local stability of straight and helical configurations. Through some simple simulations we finally compare our results with real experiments. 
\end{abstract}

\maketitle

\tableofcontents
\section{Introduction}

Subject to proper boundary conditions or body forces, elastic materials undergo deformations strongly depending on their shapes. For thin three-dimensional bodies, low-energy deformations depend on the shape to such an extent that elastic theories for lower dimensional bodies like rods, plates or shells have been developed (see for instance \cite{Ciarlet, Love}). The rigorous ansatz-free variational derivation of such theories from three-dimensional nonlinear elasticity is a very well established research field. In few words the problem one aims to solve is the derivation of the elastic energy of a thin object by taking a suitable limit of the elastic energy of the three-dimensional body as its thickness vanishes. Fundamental issues, like the convergence of global minimizers and of critical points in the limit process have been discussed in the literature and a hierarchy of theories depending on the scaling of the energies (or the forces) in terms of the thickness has been derived since the pioneering papers by Le Dret and Raoult \cite{LR1} and by Friesecke, James and M\"uller \cite{FJM, FJM1} by many authors \cite{ABP, MM1, MM2, MS, Scardia1, Scardia2} under different modelling assumptions. \\

More recently the problem above has gained increasing attention in the case of prestrained bodies. A number of results have appeared in the case of $3$-d to $2$-d dimension reduction in \cite{BMS, FHMP, LMP1, LMP2, S} and many interesting questions have been raised. 
On one hand the above problem has been left undiscussed in the case of $3$-d to $1$-d dimension reduction (see \cite{ADS, ADSK} for a similar problem in the theory of nematic elastomers where the dimension reduction is performed in two subsequent steps $3$-d to $2$-d and $2$-d to $1$-d), on the other hand recent experiments in \cite{Plos} suggest to consider it from a rigorous mathematical point of view. 
In few words in \cite{Plos} the authors take two long strips of elastomer of the same initial width, but unequal length. The short strip is stretched uniaxially to be equal in
length to the longer one. The initial heights are chosen so that after stretching the bi-strip system has a  rectangular cross section. The two strips are then glued together side-by-side along their length. The bi-strips appear flat and the initially shorter strip is under a uniaxial prestrain. As a last step of the experiment, the external force needed to stretch the ends of the bi-strip is gradually released so that the initially flat bi-strip starts to bend and twist out of plane. It may evolve towards either a helical or hemihelical shape (more complex structure in which helices with different chiralities seem to periodically alternate), depending on the cross-sectional aspect ratio. In particular, a big enough aspect ratio favors the formation of a helix, whereas a small aspect ratio favors that of a hemihelix. 

The analysis in \cite{Plos} is simplified first assuming that the system is one-dimensional, so that a Kirchhoff-rod approximation is used, and then analyzing stability of configurations close to the straight rod by matching asymptotics in a restricted class of competitors. On one hand the results appear to be mathematically unsatisfying, on the other hand a rigorous derivation of the complete observed behavior seems to be quite challenging. In this paper we aim at partially bridging this gap by first providing the 1-d limit theory by Gamma-convergence and then proving approximation of isolated local minima of the 1-d energy by local minima of the 3-d energy. In our opinion the second aspect constitutes the real novelty of this paper together with some second order criteria for strict local minimality of 1-d configurations. Second order criteria for Kirchhoff-rod energies already appeared in the literature \cite{M, MPG, MR} but lacking any rigorous relation to the 3-d problem (see below for other differences between the two approaches). Furthermore we make use of our criteria to test the stability of the 1-d straight and helical configurations in terms of critical forces. By means of numerical experiments we make some rough comparison between our analysis and the experimental results in \cite{Plos}.\\
 
We now detail our setting. We consider a prestrained thin $3$-dimensional stripe subject to terminal loads. Given a small parameter $h>0$, a stripe of thickness $h$, mid-fiber $(0,L)$ and cross section $S\subset\R^2$ is denoted by $\Omega_h:=(0,L)\times hS$, with $S$ being a symmetric, smooth enough open set of unit area. To model the prestrain we introduce a measurable matrix-valued field $\overline {A}_h:\Omega_h\to \mathbb{M}^{3\times 3}$ (note that here we do not assume any further regularity on $A_{h}$ as for instance in \cite{LMP1, LMP2} as we are interested in discontinuous prestrain to compare our results with the experiments in \cite{Plos}). Denoting by $\Gamma_h\subset\{0,L\}\times hS$ a portion of the boundary of the beam with positive two-dimensional measure, we consider a force field $\overline{f}_h:\Gamma_h\to\R^3$ acting on the beam. Further denoting by $W: \mathbb{M}^{3\times 3}\to [0,+\infty]$ the strain energy density we introduce the total energy of the system stored by a deformation $u:\Omega_h\to \R^3$ as
\begin{equation*}
\overline{E}_h(u)=\int_{\Omega_h}W(\nabla u(x)\overline {A}_h(x))\,\mathrm{d}x-\int_{\Gamma_h}\langle\overline{f}_h(x),u(x)\rangle\,\mathrm{d}\mathcal{H}^{2}.
\end{equation*}
As usual in the analysis of thin elastic objects, we perform a change of variables to rewrite the energy on a fixed domain. We set $\Omega=\Omega_1,\,\Gamma=\Gamma_1$ and define the new variable $v:\Omega\to\R^3$, the rescaled prestrain $A_h:\Omega\to\mathbb{M}^{3\times 3}$ and force $f_h:\Gamma\to\R^3$ as
\begin{equation*}
v(x)=u(x_1,hx_2,hx_3),\; A_h(x)=\overline {A}_h(x_1,hx_2,hx_3),\; f_h(x)=\overline{f}_h(x_1,hx_2,hx_3).
\end{equation*}
In terms of the rescaled gradient $\nabla_hv=\partial_1 v\otimes e_1+\frac{1}{h}(\partial_2 v\otimes e_2+\partial_3\otimes e_3)$, the energy can be rewritten as $\overline{E}_h(u)=h^{2}E_h(u)$, where
\begin{equation*}
E_h(u)=
%I_h(v)-<f_h,v>:=
\int_{\Omega}W(\nabla_hv(x) A_h(x))\,\mathrm{d}x-\int_{\Gamma}\langle f_h(x),u(x)\rangle\,\mathrm{d}\mathcal{H}^2.
\end{equation*}
We will be interested in the case where 
\begin{equation*}
\frac{f_h}{h^2}\rightharpoonup f \quad\text{in }L^2(\Gamma)
\end{equation*}
suggesting that a meaningful scaling for $E_{h}$ is $E_{h}/h^{2}$. Energies $E_{h}$ of order $h^{2}$ correspond to bending flexures and torsions and lead to a rod theory. To prevent the energy functional to be unbounded from below due to the invariance under translations we will further assume $v(0,x_2,x_3)=R_{0}(0,hx_{2},hx_{3})$ for some $R_{0}\in SO(3)$. Under standard frame indifference, non-degeneracy and regularity assumptions on $W$ (see the next section for the precise set of assumptions) and assuming the rescaled prestrain to be a small perturbation of the identity in the sense of \eqref{weakprestrain}, we consider the $\Gamma$-limit as $h\to 0$ of $E_{h}/h^{2}$ under several boundary conditions comprising clamped and weak clamped beams (at both ends the beam is free to rotate on a plane orthogonal to a fixed direction as in \cite{Plos} or \cite{tendril}). We notice that such kind of boundary conditions may involve some additional technical difficulties (see for instance Section \ref{convisolated}) and, to the best of our knowledge, their role in the variational analysis of stability via $\Gamma$-convergence has not been studied. The quadratic scaling has been first studied by Mora and M\"uller in \cite{MM1, MM2} in the case $A_{h}$ is the identity matrix (see also \cite{Scardia1, Scardia2}). Under the assumptions above, for such energies one can prove a $\Gamma$-convergence result 
and characterize the limit energy density following the same steps as in \cite{MM1}. Using the notation $R(x_1)=(\partial_1 v(x_1)|d_2(x_1)|d_3(x_1))$, Theorem \ref{1dlimit} shows that the limit energy $E_0:W^{1,2}(\Omega,\R^3)\times (L^2(\Omega,\R^3))^2\to [0,+\infty]$ is finite on the space 
\begin{align*}
\mathcal{A}:=\Bigl\{(v,d_2,d_3)\in W^{2,2}(\Omega,\R^3)\times (W^{1,2}(\Omega,\R^3))^2\text{ depend only on }&x_1,\,v(0)=0,\\  
 &(\partial_1v|d_2|d_3)\in SO(3) \text{ a.e.}\Bigr\}.
\end{align*}
where it has the form
\begin{equation}\label{intro-deflimit}
E_0(R):=
\frac{1}{2}\int_0^LQ_2\left(x_1,R^T(x_1)R^{\prime}(x_1)\right)\,\mathrm{d}x_1-\int_{\Gamma}\langle f(x),v(x)\rangle\,\mathrm{d}\mathcal{H}^2 
\end{equation}
with $Q_{2}(x_{1},A)$ being a quadratic function of the entries of $A$ which is obtained by a minimization problem involving the quadratic form of linear elasticity (see Proposition \ref{ondensity}).\\
The formula for $Q_{2}$ can be simplified when the energy density $W$ is isotropic. Having in mind the experiments in \cite{Plos}, we assume additional symmetry of $S$, covering the case of a rectangular cross section as in the experimental set-up. Furthermore we consider a prestrain matrix which describes locally an incompressible deformation ($\det \overline A_{h}=1$) and has a two-layer structure of the form
\begin{equation}\label{intro-twolayer}
\overline A_h(x):=\begin{cases}
{\rm{diag}}(1+h\chi,\frac{1}{\sqrt{1+h\chi}},\frac{1}{\sqrt{1+h\chi}}) &\mbox{if $x_3>0$,}\\
I &\mbox{if $x_3<0$,}
\end{cases}
\end{equation}
where $\chi>0$ is the effective strength of the stretching. Under this assumptions, in Proposition \ref{curvaturelimit} we prove that  
there exist positive coefficients $c_{12}, c_{13}, c_{23}$ such that, up to an additive constant, the density $Q_2$ in \eqref{intro-deflimit} is given by
\begin{equation}\label{intro-abstractdensity}
Q_2(A)=c_{12}a_{12}^2+c_{13}(a_{13}-k)^2+c_{23}a_{23}^2,
\end{equation}
where
\begin{equation}\label{intro-curvature-prestrain}
k=\chi \frac{\int_{S^{+}}x_3\,\mathrm{d}\mathcal{H}^2}{\int_Sx_3^2\,\mathrm{d}\mathcal{H}^2}
\end{equation}
is the intrinsic curvature. This implies in particular that, without external forces, a uniformly curved beam is a global minimizer (and actually, up to rotations, the unique global minimizer) of the limit functional. As a result, by the general theory of $\Gamma$-convergence, for $h$ small enough, global minimizers of the energy $E_{h}$ with zero external forces are close to a curved beam (see Proposition \ref{curvaturelimit} and subsequent remarks).\\ 

The analysis of the convergence of global minimizers has been complemented in literature with results on the convergence of critical points of the $3$-d functional to critical points of the $1$-d limit energy. For models without prestrain we may mention the papers \cite{MM2}, where more regular energy densities are taken into account, or \cite{MS}, for a stronger scaling. Since instead we are also interested in validating the usage of stable states of the one-dimensional limit as approximations of stable states of the three-dimensional energy, we further analyse the problem from the reverse point of view. We namely show that, for a given (strict) local minimizer of the 1-d functional in the $W^{1,2}$-topology, it exists a sequence of local minimizers of the 3-d one converging to it. The choice of the topology, as we also discuss in Section \ref{convisolated}, complies well with simple criteria for showing local minimality, which we also provide.

Going into detail, we first prove that, thanks to the quadratic structure of the limit functional $E_{0}$, if $R$ is a strict local minimizer with respect to the $W^{1,2}$-topology, then it is also a strict local minimizer with respect to the $L^{2}$-topology where it is possible to apply the results in \cite{KS} (see also \cite{BrLoc}) about approximation of strict local minima by $\Gamma$-convergence. However the advantage of working in $W^{1,2}$ (instead of directly considering $L^2$) is that it is possible to characterize local minima of $E_{0}$ by means of its second variation. To this end in Proposition \ref{variations} we compute first and second variations of the functional $E_{0}$ and eventually prove in Theorem \ref{mainlocal} that if the second variation of $E_{0}$ at a critical point $R$ is a positive-definite quadratic form, then $R$ is a strict local minimizer in $L^{2}$. We remark that we do not use Euler angles to rewrite $E_{0}$ in contrast to \cite{M,MPG,MR}. On one hand this makes our problem mathematically more complicated since the domain of our energy functional is not a linear space, on the other we gain in generality since we have to pose no a-priori restrictions to the configurations in order to avoid polar singularities. It is worth noticing that the proof of our local minimality criterion is simplified by the possibility of extending the limit energy functional $E_0$ in a neighborhood of $SO(3)$ to a functional $\tilde E_{0}$ twice continuously Fr\'{e}chet-differentiable in $W^{1,2}$. Such a strong property is a consequence of two facts: the quadratic dependence of $\tilde E_{0}(M)$ on $M'$ and its smooth dependence on $M$. The first is due to the quadratic structure of the limit energy while the second to the dimensionality of the problem: being $\tilde E_{0}$ defined on one-dimensional $W^{1,2}$ functions, Sobolev embeddings give $L^\infty$-compactness for $M$ which turns out to provide a smooth dependence of $\tilde E_{0}$ on $M$ (see Lemma \ref{frechetdiff}).\\ 

In the last section of the paper we investigate the local stability of straight and helical configurations under terminal loads. A similar problem, with the already discussed restrictions and in absence of prestrain, appeared in \cite{M, MPG, MR}. In Theorem \ref{idstable} we compute a critical value (explicitly depending on the boundary conditions) $f^{crit}$ of the terminal load $f$ such that, for $f>f^{crit}$ the straight configuration is a  $L^{2}$ local minimizer, while for $f<f^{crit}$ it is not. This result also enable us to prove that the straight configuration is the unique global minimizer of $E_{0}$ for sufficiently large loads $f\geq f_{g}>f^{crit}$. A stability analysis, based on a conjugate point method, is performed in Section \ref{stabhel} for helical solutions. The results of this section are also exploited to provide numerical evidence of some of the experimentally observed behavior of the physical model in \cite{Plos}. At the critical force, under some condition on the parameters of the problem, helical solutions arbitrarily close at the origin to the straight configuration emerge as a branch of local minima of the energy $E_{0}$ with respect to their own boundary conditions. If instead the same boundary conditions are kept, both in the clamped-clamped and in the weak-clamped case, Theorem \ref{bifurcation} shows that for $f=f^{crit}$ there exist a branch of critical points bifurcating from the identity. 

The last part of the paper is devoted to simple numerical experiments in which we show that two factors can influence the stability of helical solutions, namely the aspect ratio of the rectangular cross section and the intrinsic curvature of the rod through the prestrain parameter $\chi$ in \eqref{intro-twolayer}.

\section{Notation and preliminaries on the model}
We denote by $\{e_{1},e_{2},e_{3}\}$ the standard basis in $\R^{3}$, by $\mathbb{M}^{3\times 3}$ the set of all real-valued $3\times3$ matrices and by $I$ the identity matrix. Given a matrix $M$ we denote by $M^{T}$ its adjoint matrix (this convention will be also used for row and column vectors). The entries of $M$ will be denoted by $m_{ij}$. We further set ${\rm sym}(M)=\frac{1}{2}(M+M^T)$. We let $SO(3)$ be the set of all rotations, while $\mathbb{M}^{3\times 3}_{sym}$ and $\mathbb{M}^{3\times 3}_{skew}$ denote the symmetric and skew-symmetric matrices, respectively. Given $A\in\mathbb{M}^{3\times 3}_{skew}$ we define $\omega_{A}$ as the unique vector such that $Av=\omega_{A}\times v$ for all $v\in\R^3$. \\
All euclidean spaces will be endowed with the canonical euclidean norm. 
Given two vectors $v,w\in \R^3$ we denote by $\langle v,w\rangle$ the scalar product and by $v\otimes w$ the dyadic product. Moreover, we define ${\rm diag}(v)\in\mathbb{M}^{3\times 3}$ as the diagonal matrix with entries ${\rm{diag}}(v)_{ij}=v_i\delta_{ij}$. All over the paper $C$ denotes a generic constant that may change from line to line.
The derivative of one-dimensional absolutely continuous functions will be denoted by $^{\prime}$. 

\subsection{Mathematical modeling of prestrained elastic materials}
We will consider a prestrained thin $3$-dimensional stripe. Given a small parameter $h>0$, a stripe of thickness $h$, mid-fiber $(0,L)$ and cross section $S\subset\R^2$ is denoted by $\Omega_h:=(0,L)\times hS$. On $S$ we will assume that it is a bounded open connected set having unitary area and Lipschitz boundary. We moreover assume that $S$ satisfies the following symmetry properties:  

\begin{equation}\label{sym}
\int_Sx_2x_3\,\mathrm{d}x_2\mathrm{d}x_3=\int_Sx_2\,\mathrm{d}x_2\mathrm{d}x_3=\int_Sx_3\,\mathrm{d}x_2\mathrm{d}x_3=0.
\end{equation}

The prestrain is defined as a measurable matrix-valued field $\overline {A}_h:\Omega_h\to \mathbb{M}^{3\times 3}$.
We require the orientation preserving condition
\begin{equation}\label{invertible}
\det(\overline {A}_h(x))>0\quad \text{a.e. in }\Omega_h.
\end{equation}
We consider a hyperelastic material and assume a multiplicative decomposition for the strain (see \cite{LMP1}). Denoting by $W: \mathbb{M}^{3\times 3}\to [0,+\infty]$ the strain energy density, the stored energy of a deformation $u:\Omega_h\to \R^3$ is expressed by 
\begin{equation*}
E_h(u)=\int_{\Omega_h}W(\nabla u(x) \overline {A}_h(x))\,\mathrm{d}x.
\end{equation*}
Throughout the paper we make the following standard assumptions on the density $W$:
\begin{itemize}
\item[(i)] $W(RF)=W(F)\quad\forall R\in SO(3)\quad$ (frame indifference),
\item[(ii)] $W(F)\geq c\,\dist^2(F,SO(3))$ and $W(I)=0\quad$ (non-degeneracy),
\item[(iii)] $W$ is $C^2$ in a neighborhood $U$ of $SO(3)$.
\end{itemize}
In addition to the stored energy we want to consider an external boundary force of the type described below. Given a portion $\Gamma_h\subset\{0,L\}\times hS$ with $\mathcal{H}^2(\Gamma_h)>0$ and a force field $\overline{f}_h:\Gamma_h\to\R^3$ we define the total energy as
\begin{equation*}
\overline{E}_h(u)=\int_{\Omega_h}W(\nabla u(x)\overline {A}_h(x))\,\mathrm{d}x-\int_{\Gamma_h}\langle\overline{f}_h(x),u(x)\rangle\,\mathrm{d}\mathcal{H}^{2}.
\end{equation*}
As it is customary when dealing with the variational analysis of thin objects, we perform a change of variables to rewrite the energy on a fixed domain. Setting $\Omega=\Omega_1,\,\Gamma=\Gamma_1$, we define the new variable $v:\Omega\to\R^3$ and the rescaled prestrain $A_h:\Omega\to\mathbb{M}^{3\times 3}$ and force $f_h:\Gamma\to\R^3$ as
\begin{equation*}
v(x)=u(x_1,hx_2,hx_3),\; A_h(x)=\overline {A}_h(x_1,hx_2,hx_3),\; f_h(x)=\overline{f}_h(x_1,hx_2,hx_3).
\end{equation*}
Introducing the rescaled gradient $\nabla_hv=\partial_1 v\otimes e_1+\frac{1}{h}(\partial_2 v\otimes e_2+\partial_3v\otimes e_3)$, the energy takes the form $\overline{E}_h(u)=h^{2}E_h(u)$, where
\begin{equation}\label{defenergy}
E_h(u)=
%I_h(v)-<f_h,v>:=
\int_{\Omega}W(\nabla_hv(x) A_h(x))\,\mathrm{d}x-\int_{\Gamma}\langle f_h(x),u(x)\rangle\,\mathrm{d}\mathcal{H}^2.
\end{equation}
We will be interested in the case where 
\begin{equation}\label{forcescale}
\frac{f_h}{h^2}\rightharpoonup f \quad\text{in }L^2(\Gamma)
\end{equation}
suggesting that a meaningful scaling for $E_{h}$ is $E_{h}/h^{2}$.

As a matter of fact, for a generic force term $E_{h}/h^{2}$ might negatively diverge since it does not control translations. To rule this out we supplement the problem with the following boundary condition allowing at one end only rotations: 
\begin{equation}\label{weakbc}
v(0,x_2,x_3)=R_0\begin{pmatrix}
0 \\ hx_2\\hx_3
\end{pmatrix}\quad\text{for some }R_0\in SO(3).
\end{equation}
Stronger boundary conditions of Dirichlet type can be also considered and will be discussed in Section \ref{sect-boundary}, namely
\begin{itemize}
\item[]{\bf Clamped}: 
\begin{equation}\label{clamp}
v(0,x_2,x_3)=R_{0}\begin{pmatrix} 0\\ hx_2\\hx_3
\end{pmatrix}
\quad\hbox{ for a fixed } R_{0}\in SO(3)
\end{equation} 
\item[]{\bf Clamped-Clamped}: in addition to \eqref{clamp} it also holds 
\begin{equation}\label{clamp-clamp}
v(L,x_2,x_3)-\int_Sv(L,x_2,x_3)\,\mathrm{d}\mathcal{H}^2=R_{L}\begin{pmatrix} 0\\ hx_2\\hx_3
\end{pmatrix}
\quad \text{for a fixed }R_{L}\in SO(3).
\end{equation}
\end{itemize}
We will see in Corollary \ref{convbc} that the above constraints will imply Dirichlet boundary conditions for the limit strains.\\
A weaker constraint, which we call {\bf weak clamping} (at one or both ends) has been also considered in literature (see for instance \cite{Plos}). In this case one leaves the ends free to rotate in the plane orthogonal to a fixed direction. Assuming for simplicity this direction to be $e_{1}$ and considering the more general case of a weak clamping at both ends, it amounts to requiring 

\begin{align}\label{rigid}
\nonumber&v(0,x_2,x_3)=M_{0}\begin{pmatrix} 0\\ hx_2\\hx_3
\end{pmatrix}
\hbox{ for some } M_{0}\in SO(3) \text{ satisfying } M_{0}e_{1}=e_{1}\\
&v(L,x_2,x_3)-\int_Sv(L,x_2,x_3)\,\mathrm{d}\mathcal{H}^2=M_{L}\begin{pmatrix} 0\\ hx_2\\hx_3
\end{pmatrix}
\hbox{ for some } M_{L}\in SO(3) \text{ satisfying } M_{L}e_{1}=e_{1}.
\end{align}
The analysis of the energy $E_{h}$ will entail bending and torsion effects as a result of the $h^2$ scaling. Accordingly, in order to obtain a proper limiting theory as the thickness $h$ tends to zero, we assume throughout this paper that the rescaled prestrain is a (not necessarily smooth) perturbation of the identity in the following sense:
\begin{equation}\label{weakprestrain}
\|A_h-I\|_{\infty}\leq C_0\,h,\quad \frac{1}{h}(A_h-I)\to \overline{A}\quad\text{a.e.}
\end{equation}
for some $\overline{A}\in L^{\infty}(\Omega,\mathbb{M}^{3\times 3})$. Note that these assumptions already imply (\ref{invertible}) if $h>0$ is small enough.

Now we can introduce the precise variational framework for our model. We consider admissible deformations of the class
\begin{equation}\label{admissible}
{\mathcal A}_{SO(3)}(\Omega):=\{v\in W^{1,2}(\Omega,\R^3):\;v \text{ fulfills }(\ref{weakbc}) \text{ in the sense of traces.}\}
\end{equation}
and define (with a slight abuse of notation) the energies $E_{h}:W^{1,2}(\Omega,\R^3)\to(-\infty,+\infty]$ as
\begin{equation}\label{totenergy}
E_{h}(v):=
\begin{cases}
\int_{\Omega}W(\nabla_hv(x) A_h(x))\,\mathrm{d}x-\int_{\Gamma}\langle f_h(x),v(x)\rangle\,\mathrm{d}\mathcal{H}^2 &\mbox{if $v\in {\mathcal A}_{SO(3)}(\Omega)$,}\\
+\infty &\mbox{otherwise.}
\end{cases}
\end{equation}

%If not stated otherwise we always assume that
% Note that the energy may be infinite also for functions $v\in {\mathcal A}_{SO(3)}(\Omega)$. Later on we will also consider more restrictive boundary conditions than (\ref{weakbc}). Of course we could drop the boundary conditions if $\int_{\Gamma}f_h=0$.
\section{Convergence of global minimizers}
In this Section we study the global minimizers of our model problem analyzing the $\Gamma$-convergence of the sequence of energies $E_{h}$ as $h\to 0$. The analysis will be performed for general strain energy densities first and then specialized to the isotropic case. 

\subsection{$\Gamma$-convergence of general prestrained energies}

We start recalling Proposition 4.1 in \cite{MM2}, which is based on the geometric rigidity estimate in \cite{FJM} and show strong compactness for sequences with finite energy of order $h^2$. 

%and that Dirichlet boundary conditions already imply a boundary condition for the gradients in the limit. The last property will be important for other boundary conditions than (\ref{weakbc}).
%While the original version contains only the statement for clamped boundary conditions on one side, the generalization to both sides is straightforward so that we omit the proof.
\begin{proposition}\label{finercompactness}
	Let $v_h\in W^{1,2}(\Omega,\R^3)$ be such that
	\begin{equation*}
	\int_{\Omega}\mathrm{dist}^2(\nabla_h v_h,SO(3))\,\mathrm{d}x\leq Ch^2.
	\end{equation*}
	Then there exists an associated sequence $R_h\in C^{\infty}((0,L),SO(3))$ such that
	\begin{enumerate}
		\item[(i)] $\|\nabla_hv_h-R_h\|_{L^2(\Omega)}\leq Ch$,
		\item[(ii)] $\|\partial_1R_h\|_{L^2(0,L)}\leq C$,
	\end{enumerate}
	If in addition $v_h$ fulfills $v_h(0,x_2,x_3)=(0,hx_2,hx_3)$, then 
	\begin{enumerate}
		\item[(iii)] $|R_h(0)-I|\leq C\sqrt{h}$
	\end{enumerate}
\end{proposition}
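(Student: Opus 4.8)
The plan is to localise the geometric rigidity estimate of \cite{FJM} on a partition of the rod into cells of length comparable with $h$, following the scheme of Friesecke, James and M\"uller. The key preliminary remark is that the rescaled gradient is an honest gradient once the scaling is undone: setting $u_h(y):=v_h(y_1,y_2/h,y_3/h)$ on the physical domain $\Omega_h=(0,L)\times hS$, one has $\nabla u_h(y)=\nabla_h v_h(x)$ for $y=(x_1,hx_2,hx_3)$, and $\mathrm{d}y=h^2\,\mathrm{d}x$.

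First I would partition $(0,L)$ into the $\lfloor L/h\rfloor$ intervals $I_a=(ah,(a+1)h)$ and consider the cells $\tilde Q_a:=I_a\times hS$, each being a translate of the dilation $h\cdot\big((0,1)\times S\big)$. Since the constant in the geometric rigidity estimate is invariant under translations and dilations, \cite{FJM} applied on $\tilde Q_a$ yields $R_a\in SO(3)$ with $\int_{\tilde Q_a}|\nabla u_h-R_a|^2\,\mathrm{d}y\le C\int_{\tilde Q_a}\mathrm{dist}^2(\nabla u_h,SO(3))\,\mathrm{d}y$, the constant $C$ being uniform in $a$ and $h$. Undoing the scaling and summing in $a$ gives, for the piecewise constant field $\bar R_h:=\sum_a R_a\mathds 1_{I_a}$, the estimate $\|\nabla_h v_h-\bar R_h\|_{L^2(\Omega)}^2\le C\int_\Omega \mathrm{dist}^2(\nabla_h v_h,SO(3))\,\mathrm{d}x\le Ch^2$. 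To control the oscillation of $\bar R_h$ I would run rigidity once more on each doubled cell $(ah,(a+2)h)\times hS$ and compare the resulting rotation with both $R_a$ and $R_{a+1}$; as $|\tilde Q_a|=h^3$ and the $R_a$ are constant, this produces $|R_{a+1}-R_a|^2\le \tfrac{C}{h}\int_{(ah,(a+2)h)\times S}\mathrm{dist}^2(\nabla_h v_h,SO(3))\,\mathrm{d}x$. Summing the squared increments (each slice counted at most twice) gives $\sum_a|R_{a+1}-R_a|^2\le Ch$ and $\sum_a h^{-1}|R_{a+1}-R_a|^2\le C$. The smooth field $R_h$ is then obtained by interpolating $\bar R_h$ affinely, mollifying at scale $h$ and projecting pointwise onto $SO(3)$ by the nearest-point projection; this projection is well defined and smooth because consecutive rotations are $O(\sqrt h)$-close, so the mollified field never leaves a fixed tubular neighbourhood of $SO(3)$. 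The discrete bounds transfer into $\|\bar R_h-R_h\|_{L^2}\le Ch$ and $\|\partial_1 R_h\|_{L^2(0,L)}^2\le C\sum_a h^{-1}|R_{a+1}-R_a|^2\le C$, which yield (i) and (ii).

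For (iii) I would use the boundary datum together with the cross-section symmetry \eqref{sym}. The Poincar\'e inequality accompanying rigidity on the first cell provides $b_0\in\R^3$ with $\|u_h-(R_0\,y+b_0)\|_{L^2(\tilde Q_0)}^2\le Ch^2\int_{\tilde Q_0}\mathrm{dist}^2\,\mathrm{d}y\le Ch^6$, where $R_0$ is the rotation of $\tilde Q_0$. The hypothesis $v_h(0,x_2,x_3)=(0,hx_2,hx_3)$ says precisely that $u_h(0,y_2,y_3)=(0,y_2,y_3)$ on the end face $\{0\}\times hS$, so the scaled trace inequality on the slab of width $h$, whose remainder only involves $\|\nabla u_h-R_0\|_{L^2(\tilde Q_0)}^2\le Ch^4$, gives $\int_{hS}\big|(I-R_0)(0,y_2,y_3)^T-b_0\big|^2\,\mathrm{d}y_2\mathrm{d}y_3\le Ch^5$. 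Writing the integrand as $|y_2 M_2+y_3 M_3-b_0|^2$ with $M_\alpha=(I-R_0)e_\alpha$, the first-moment and mixed-moment conditions of \eqref{sym} (rescaled to $hS$) annihilate $b_0$ and the cross term, leaving $h^4\min\big(\int_S x_2^2\,\mathrm{d}x,\int_S x_3^2\,\mathrm{d}x\big)\,(|M_2|^2+|M_3|^2)\le Ch^5$. Thus the second and third columns of $R_0$ are $O(\sqrt h)$-close to $e_2,e_3$, and since $R_0\in SO(3)$ the first column is forced close to $e_1$, whence $|R_0-I|\le C\sqrt h$; the construction of $R_h$ then transfers this to $R_h(0)$.

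The main obstacle is less any single estimate than the bookkeeping required to pass from the piecewise constant rotations to a genuinely $C^\infty$, $SO(3)$-valued field while keeping both (i) and (ii): one must guarantee that the nearest-point projection stays in its smooth range, which is exactly secured by the $O(\sqrt h)$-closeness of adjacent $R_a$. Part (iii) is comparatively self-contained once \eqref{sym} is invoked, the one delicate point being to apply the trace inequality to $u_h$ itself, which lies in $W^{1,2}$, rather than to its (uncontrolled) second derivatives.
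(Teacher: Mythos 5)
The paper itself does not prove this proposition: it recalls it as Proposition 4.1 of \cite{MM2}, whose proof rests on the geometric rigidity estimate of \cite{FJM}. Your argument --- partition of the rod into cells of length $h$, rigidity on single and doubled cells with scale-invariant constant, summation of the squared increments of the piecewise-constant rotations, interpolation/mollification/nearest-point projection onto $SO(3)$, and the scaled trace inequality combined with the symmetry conditions \eqref{sym} for the boundary estimate (iii) --- is precisely the standard Friesecke--James--M\"uller/Mora--M\"uller construction underlying that cited result, so it is correct and follows essentially the same route as the proof the paper points to.
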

\begin{remark}\label{rem-clamped}
If a clamped boundary condition \eqref{clamp} holds, statement (iii) in the previous proposition reads
	\begin{enumerate}
		\item[(iii')] $|R_h(0)-R_{0}|\leq C\sqrt{h}$.
	\end{enumerate}
This follows immediately by frame indifference if we consider the new sequence $\tilde{v_h}=R_0^Tv_h$ and apply the above proposition.
\end{remark}
Using Proposition \ref{finercompactness} we deduce the analogous compactness property for our model. In the statement we consider the space of admissible limit deformations defined as
\begin{align*}
\mathcal{A}:=\Bigl\{(v,d_2,d_3)\in W^{2,2}(\Omega,\R^3)\times (W^{1,2}(\Omega,\R^3))^2\text{ depend only on }&x_1,\,v(0)=0,\\  
 &(\partial_1v|d_2|d_3)\in SO(3) \text{ a.e.}\Bigr\}.
\end{align*}

\begin{proposition}\label{compact}
Let $v_h\in {\mathcal A}_{SO(3)}(\Omega)$ be such that 
\begin{equation*}
E_h(v_h)\leq C\,h^2,
\end{equation*}
then (up to subsequences) $v_h\to v$ strongly in $W^{1,2}(\Omega)$, $\nabla_hv_h\to (\partial_1 v | d_2 | d_3)$ strongly in $L^2(\Omega)$ for some 
$(v,d_{2},d_{3})\in \mathcal{A}$.
\end{proposition}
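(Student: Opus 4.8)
The plan is to deduce the stated compactness from Proposition~\ref{finercompactness} together with the energy bound and the structural assumptions on $W$ and $A_h$. First I would use the non-degeneracy assumption (ii) on $W$ and the weak-prestrain bound \eqref{weakprestrain} to convert the energy bound $E_h(v_h)\leq Ch^2$ into the distance estimate $\int_\Omega \mathrm{dist}^2(\nabla_h v_h,SO(3))\,\mathrm{d}x\leq Ch^2$. This requires controlling two things: the forcing term $\int_\Gamma\langle f_h,v_h\rangle\,\mathrm{d}\mathcal{H}^2$ and the effect of multiplying by $A_h$. For the prestrain, since $A_h=I+O(h)$ uniformly, a matrix $F$ with $\nabla_h v_h = F$ satisfies $\mathrm{dist}(F,SO(3))\leq \mathrm{dist}(FA_h,SO(3))+C\|A_h-I\|_\infty|F|$, so after absorbing the $Ch|F|$ term (using a Young-type inequality and the coercivity) one recovers the bound on $\mathrm{dist}^2(\nabla_h v_h,SO(3))$. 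For the forcing term I would use the weak convergence \eqref{forcescale}, the trace theorem, and a Poincar\'e-type inequality to bound it by $Ch^2$ plus a small multiple of the energy, closing the estimate by absorption.

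Once the distance estimate holds, I would invoke Proposition~\ref{finercompactness} (in the form of Remark~\ref{rem-clamped} for the boundary value $R_0$) to produce the approximating rotations $R_h\in C^\infty((0,L),SO(3))$ with $\|\nabla_h v_h-R_h\|_{L^2}\leq Ch$ and $\|\partial_1 R_h\|_{L^2}\leq C$. From (ii) the sequence $R_h$ is bounded in $W^{1,2}(0,L)$, so up to a subsequence $R_h\rightharpoonup R$ weakly in $W^{1,2}$ and strongly in $L^2$ (and in $L^\infty$ by the one-dimensional Sobolev embedding); since $SO(3)$ is closed, the limit $R$ takes values in $SO(3)$ a.e. Writing $R=(d_1|d_2|d_3)$ defines the candidate limit frame, with $d_2,d_3\in W^{1,2}(0,L)$ depending only on $x_1$.

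Next I would identify the limit of $\nabla_h v_h$ and the structure of $v$. The estimate (i) gives $\nabla_h v_h\to R$ strongly in $L^2(\Omega)$ once we know $R_h\to R$ strongly in $L^2$, because $\|\nabla_h v_h-R\|_{L^2}\leq \|\nabla_h v_h-R_h\|_{L^2}+\|R_h-R\|_{L^2}\to 0$. In particular $\partial_1 v_h\to d_1$ and $\tfrac1h\partial_\alpha v_h\to d_\alpha$ in $L^2$ for $\alpha=2,3$; the latter forces $\partial_\alpha v_h\to 0$, so the limit $v$ depends only on $x_1$, and $\partial_1 v=d_1$ together with $R\in SO(3)$ yields $(\partial_1 v|d_2|d_3)\in SO(3)$ a.e. Since $\partial_1 v=d_1\in W^{1,2}$, we get $v\in W^{2,2}$. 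The boundary condition \eqref{weakbc} passes to the limit as $v(0)=0$ after averaging over the cross section. Finally, the strong $W^{1,2}$ convergence $v_h\to v$ follows because $\partial_1 v_h\to d_1$ strongly in $L^2$, $\partial_\alpha v_h\to 0$ strongly in $L^2$, and the values are controlled via Poincar\'e from the boundary condition at $x_1=0$.

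The main obstacle I expect is the careful treatment of the forcing term in the first step: because $f_h$ only converges \emph{weakly} and acts on a boundary trace, one must show it cannot make the energy too negative, which needs an a priori bound on $v_h$ that is itself only available through the coercivity one is trying to establish. The clean way around this is a bootstrap: use coercivity of the stored-energy part to bound $\|v_h\|_{W^{1,2}}$ in terms of $\|\nabla_h v_h - R_h\|_{L^2}$ and the (bounded) rotations, then use the trace and weak convergence of $f_h/h^2$ to absorb the linear forcing term into the quadratic elastic term, so that the net energy bound still yields the desired $Ch^2$ distance estimate. A secondary technical point is verifying that the weak-clamping or clamped-clamped variants of the boundary data pass to the limit correctly, but under \eqref{weakbc} the argument via Remark~\ref{rem-clamped} and cross-sectional averaging is routine.
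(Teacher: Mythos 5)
Your proposal is correct and follows essentially the same route as the paper: derive the uniform estimate $\int_\Omega\dist^2(\nabla_h v_h,SO(3))\,\mathrm{d}x\le Ch^2$ from the energy bound by controlling the force term via Poincar\'e's inequality and absorbing both it and the prestrain perturbation into the coercive elastic term, then apply Proposition \ref{finercompactness} and identify the limit $(v,d_2,d_3)\in\mathcal{A}$ exactly as you describe. The ``bootstrap'' you worry about at the end is unnecessary: the force term is bounded by a constant times $\|\nabla v_h\|_{L^2}+h$, i.e.\ it is linear in $\|\nabla v_h\|_{L^2}$, while the coercivity lower bound is quadratic, so the absorption closes directly (without any reference to the rotations $R_h$), which is precisely how the paper argues.
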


\begin{proof}
We first show that 
\begin{equation}\label{distest}
\sup_{h>0}\frac{1}{h^2}\int_{\Omega}\dist^2(\nabla_h v_h,SO(3))\,\mathrm{d}x< +\infty.
\end{equation}
Exploiting Poincar\'e's inequality and (\ref{forcescale}) we get
\begin{equation}\label{forcefinite}
\left|\frac{1}{h^2}\int_{\Gamma}\langle f_{h},v_h\rangle\,\mathrm{d}\mathcal{H}^2\right|\leq \|h^{-2}f_{h}\|_{L^2(\Omega)}\|v_h\|_{L^2(\Omega)}\leq C(\|\nabla v_h\|_{L^2(\Omega)}+h).
\end{equation}
Since $|\nabla_{h_j}v_j|^2\geq|\nabla v_j|^2$ one can use the elementary inequality $|A-B|^2\geq\frac{1}{2}|A|^2-|B|^2$ and (\ref{weakprestrain}) to show that, for $h$ small enough, 
\begin{equation*}
\int_{\Omega}\dist^2(\nabla_hv_h(x)A_h(x),SO(3))\,\mathrm{d}x\geq \frac{1}{C}\|\nabla v_h\|_{L^2}^2-C,
\end{equation*}
hence the energy bound and (\ref{forcefinite}) imply $\sup_h\|\nabla v_h\|_{L^2(\Omega)}<+\infty$. Since, by the boundary condition \eqref{weakbc}, $v_{h}(0,x_{2},x_{3})$ vanishes with $h$, we deduce that $v_{h}$ is weakly compact in $W^{1,2}(\Omega)$ and that the limit $v$ satisfies 
\begin{equation}\label{vorigin}
v(0,x_{2},x_{3})=0 \text{ a.e. }(x_{2},x_{3})\in S. 
\end{equation}
From the assumption (ii) on $W$ and (\ref{weakprestrain}) we also have that
\begin{equation*}
\int_{\Omega}\dist^2(\nabla_h v_h(x),SO(3))\leq \int_{\Omega}|\nabla_h v_h(x)|^2|I-A_h(x)|^2+\dist^2(\nabla_h v_h(x)A_h(x),SO(3))\,\mathrm{d}x\leq C\,h^2,
\end{equation*}
so that (\ref{distest}) holds and we can apply Proposition \ref{finercompactness}. Up to subsequences the family of rotations $R_h\in C^{\infty}((0,L),SO(3))$ converges weakly in $W^{1,2}((0,L))$ and uniformly on $[0,L]$ to some $R\in W^{1,2}((0,L),SO(3))$. By property $(i)$ in Proposition \ref{finercompactness}, $\nabla_hv_h\to R$ strongly in $L^{2}(\Omega)$. Since $R$ is depending only on $x_{1}$ the limit functions depend only on $x_{1}$. From this fact, the rest of the statement follows. In particular $v(0)=0$ follows from \eqref{vorigin}.
\end{proof}

We will now derive a one-dimensional limit theory. 
Given a matrix $M\in\mathbb{M}^{3\times 3}$ we let $Q_3$ be the quadratic form defined by
\begin{equation*}
Q_3(M):=\frac{\partial^2 W}{\partial F^2}(I)[M,M].
\end{equation*}
Further we set $Q_2:(0,L)\times\mathbb{M}^{3\times 3}_{skew}\to [0,+\infty)$ as
\begin{equation}\label{Q2}
Q_2(x_1,A)=\inf_{\substack{\a\in W^{1,2}(S,\R^3)\\ g\in\R^3}}\int_SQ_3
\left(\left(\begin{array}{c|c|c}
A(x_2e_2+x_3e_3)+g &\partial_{2}\a &\partial_{3}\a
\end{array}\right)+\overline{A}(x)\right)\,\mathrm{d}x_2\mathrm{d}x_3.
\end{equation}
In the next proposition we will collect some useful properties of $Q_{2}$.
\begin{proposition}\label{ondensity}
Let $Q_{2}$ be as in \eqref{Q2}. Then the following holds true:
\begin{itemize}
\item[(i)] $Q_2(x_1,A)$ is given by the equivalent minimum problem
\begin{equation}\label{Q2equiv}
\min_{\substack{\a\in W^{1,2}(S,\R^3)}}\int_SQ_3
\left(\left(\begin{array}{c|c|c}
A(x_2e_2+x_3e_3)+g_{{\rm min}} &\partial_2\a &\partial_3\a
\end{array}\right)+\overline{A}(x)\right)\,\mathrm{d}x_2\mathrm{d}x_3,
\end{equation}
where $g_{{\rm min}}\in L^{\infty}((0,L),\R^3)$ is given by
\begin{equation*}
g_{{\rm min}}(x_{1})=-\left(\int_S\overline{a}_{11}(x_{1},x_{2},x_{3})\,\mathrm{d}x_2\mathrm{d}x_3\right)e_1.
\end{equation*}
\item[(ii)] The problem \eqref{Q2equiv} has a minimizer of the form
$\a_{{\rm min}}=\a^{0}+\hat\a$, where $\a^{0}$ minimizes \eqref{Q2}
for $A=0$ and $\hat\a$ solves
\begin{equation}\label{Q2hom}
\min_{\substack{\a\in W^{1,2}(S,\R^3)}}\int_SQ_3
\left(\left(\begin{array}{c|c|c}
A(x_2e_2+x_3e_3) &\partial_2\a &\partial_3\a
\end{array}\right)\right)\,\mathrm{d}x_2\mathrm{d}x_3.
\end{equation}
\item[(iii)] $Q_2(x_1,A)$ is a quadratic function of the matrix entries of $A$, that is
\begin{equation}\label{decomposition}
	Q_2(x_1,A)=\hat{Q}_2(A)+K(x_1):A+q(x_1)
	\end{equation}
for a coercive quadratic form $\hat{Q}_2$, a matrix-valued function $K\in L^{\infty}((0,L),\mathbb{M}^{3\times 3})$ and a scalar-valued function $q\in L^{\infty}((0,L),\R)$. 
In particular $\hat{Q}_{2}(A)$ only depends on the solution $\hat \a$ of \eqref{Q2hom}.
\end{itemize}
\end{proposition}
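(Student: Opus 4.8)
The plan is to read \eqref{Q2} as a convex quadratic minimisation and to extract everything from two elementary facts about $Q_3$. First, differentiating the frame indifference (i) twice at $I$ gives $Q_3(M)=Q_3({\rm sym}(M))$, so $Q_3$ is generated by a symmetric bilinear form $\mathcal B(M,N):=\frac{\partial^2 W}{\partial F^2}(I)[M,N]=\mathcal B({\rm sym}M,{\rm sym}N)$; secondly, assumption (ii) yields $Q_3(M)\ge c\,|{\rm sym}(M)|^2$, i.e. $\mathcal B$ is positive definite on $\mathbb{M}^{3\times 3}_{sym}$. The single tool I would use throughout is the mean/fluctuation splitting over $S$: writing $\bar M:=\int_S M$ and $M':=M-\bar M$, bilinearity together with $\int_S{\rm sym}M'=0$ and $|S|=1$ kills the cross term and gives $\int_S Q_3(M)=Q_3(\bar M)+\int_S Q_3(M')$.

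The genuinely substantial step is part (i), and it is exactly here that the symmetry \eqref{sym} enters. Denote by $M=M[\alpha,g]$ the competitor in \eqref{Q2}. Its mean $\bar M$ has first column $g+\int_S\overline a_{\cdot1}$, precisely because the bending/torsion term $A(x_2e_2+x_3e_3)$ is mean-free over $S$ by \eqref{sym}; the second and third columns are $\int_S\partial_j\alpha+\int_S\overline a_{\cdot j}$, and the constants $\int_S\partial_2\alpha,\int_S\partial_3\alpha$ can be prescribed arbitrarily in $\R^3$ by adding to $\alpha$ a field linear in $(x_2,x_3)$, which, having constant gradient, leaves $M'$ untouched. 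Thus $g$ and the linear part of $\alpha$ realise \emph{any} prescribed symmetric mean ${\rm sym}\bar M$, so positive definiteness of $\mathcal B$ forces the optimal choice ${\rm sym}\bar M=0$ while the fluctuation term is unaffected. The equation $({\rm sym}\bar M)_{11}=0$ reads $g_1+\int_S\overline a_{11}=0$ and fixes $g_1$; since $g_2,g_3$ enter ${\rm sym}\bar M$ only through the $(1,2),(1,3)$ entries, where they can be traded against $\partial_2\alpha_1,\partial_3\alpha_1$, one may canonically take $g_2=g_3=0$, obtaining $g_{\rm min}=-(\int_S\overline a_{11})e_1$. Fixing $g=g_{\rm min}$ therefore costs nothing and yields \eqref{Q2equiv}; existence of a minimising $\alpha$ comes from the direct method, coercivity on the mean-zero-gradient class being supplied by the two-dimensional Korn inequality on the bounded Lipschitz set $S$ applied to ${\rm sym}M'$.

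Part (ii) is the superposition principle for the convex quadratic functional. Writing $N_0[\alpha]:=(g_{\rm min}\,|\,\partial_2\alpha\,|\,\partial_3\alpha)+\overline A$ for the $A=0$ problem and $N_1[\alpha]:=(A(x_2e_2+x_3e_3)\,|\,\partial_2\alpha\,|\,\partial_3\alpha)$ for the homogeneous problem \eqref{Q2hom}, one has $N[\alpha^0+\hat\alpha]=N_0[\alpha^0]+N_1[\hat\alpha]$, so I would simply add the two Euler--Lagrange identities: for every test $\phi$, $\int_S\mathcal B\big(N_0[\alpha^0],(0|\partial_2\phi|\partial_3\phi)\big)=0$ and $\int_S\mathcal B\big(N_1[\hat\alpha],(0|\partial_2\phi|\partial_3\phi)\big)=0$ sum to the Euler--Lagrange identity for \eqref{Q2equiv} evaluated at $\alpha^0+\hat\alpha$. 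By convexity, stationarity suffices for minimality, which proves the claimed splitting of $\alpha_{\rm min}$.

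For part (iii) I would insert $\alpha_{\rm min}=\alpha^0+\hat\alpha$ into \eqref{Q2equiv} and expand by bilinearity, obtaining $Q_2(x_1,A)=\int_S Q_3(N_0[\alpha^0])+2\int_S\mathcal B\big(N_0[\alpha^0],N_1[\hat\alpha]\big)+\int_S Q_3(N_1[\hat\alpha])$. The first term is free of $A$ and is the scalar $q(x_1)$; since \eqref{Q2hom} is linear in $A$, its solution $\hat\alpha$ may be chosen to depend linearly on $A$, so the middle term is linear in $A$, of the form $K(x_1):A$, and the last term is a quadratic form $\hat Q_2(A)$ depending only on $\hat\alpha$. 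The bounds $K,q\in L^\infty$ follow from $\overline A\in L^\infty$ and uniform corrector estimates. The only other non-formal point is coercivity of $\hat Q_2$: if $\hat Q_2(A)=0$ then ${\rm sym}\,N_1[\hat\alpha]\equiv0$ on $S$, the $(1,1)$ entry gives $a_{12}x_2+a_{13}x_3\equiv0$, hence $a_{12}=a_{13}=0$; the $(1,2),(1,3)$ entries then force $\partial_2\hat\alpha_1=-a_{23}x_3$ and $\partial_3\hat\alpha_1=a_{23}x_2$, and equality of mixed derivatives yields $a_{23}=0$, so $\hat Q_2(A)=0\Rightarrow A=0$, which by homogeneity upgrades to $\hat Q_2(A)\ge c\,|A|^2$.
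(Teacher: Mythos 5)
Your proof is correct and takes essentially the same route as the paper: your mean/fluctuation splitting with vanishing cross term is precisely the paper's $M_1+M_2$ decomposition used to identify $g_{\rm min}$ in (i), and your Euler--Lagrange superposition for (ii) and quadratic expansion $q+K:A+\hat Q_2$ for (iii) mirror the paper's argument step for step. The only differences are marginal: you actually spell out the coercivity of $\hat Q_2$ (which the paper dismisses as a direct check), while you gloss over the measurability in $x_1$ of $K$ and $q$, which the paper obtains from Lemma \ref{measurable} and Fubini's theorem but which can also be patched by noting that $x_1\mapsto Q_2(x_1,A)$ is measurable as an infimum over a countable dense subset of $W^{1,2}(S,\R^3)$.
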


\begin{proof}
(i) Given any pair $(g,\a)\in\R^3\times W^{1,2}(S,\R^3)$, for almost every $x_1\in (0,L)$ we may define 
\begin{align*}
\tilde{\a}(x_2,x_3)=\a(x_2,x_3)-x_2p-x_3r
\end{align*}
where $p,r\in\R^{3}$ are defined as
\begin{align*}
&p=p(x_1):=\int_S\partial_2\a\,\mathrm{d}x_2\mathrm{d}x_3+\int_S\left(
\overline{a}_{21}+\overline{a}_{12}, \overline{a}_{22}, \displaystyle\frac{\overline{a}_{23}+\overline{a}_{32}}{2}\right)^T\mathrm{d}x_2\mathrm{d}x_3,
\\
&r=r(x_1):=\int_S\partial_3\a\,\mathrm{d}x_2\mathrm{d}x_3+\int_S\left(
\overline{a}_{31}+\overline{a}_{13},\displaystyle \frac{\overline{a}_{23}+\overline{a}_{32}}{2}, \overline{a}_{33}\right)^T\mathrm{d}x_2\mathrm{d}x_3.
\end{align*}
Then we can write 
\begin{align*}
\left(\begin{array}{c|c|c}
A(x_2e_2+x_3e_3)+g &\partial_2\a &\partial_3\a
\end{array}\right)=&\left(\begin{array}{c|c|c}g-g_{{\rm min}} & p & r\end{array}\right)\\
&+\left(\begin{array}{c|c|c}
A(x_2e_2+x_3e_3)+g_{{\rm min}} &\partial_2\tilde{\a} &\partial_3\tilde{\a}
\end{array}\right)
\\
=&:M_1+M_2.
\end{align*}
Note that $M_1$ depends only on $x_1$ and $M_2$ is constructed in such a way that, by (\ref{sym}), the symmetric part of $M_2+\overline{A}$ has zero average on $S$. Thus, expanding the quadratic form $Q_3(M)$ (which depends only on the symmetric part of $M$) we deduce that
\begin{align*}
&\int_S Q_3\left(\left(\begin{array}{c|c|c}
A(x_2e_2+x_3e_3)+g &\partial_2\a &\partial_3\a
\end{array}\right)+\overline{A}(x)\right)\,\mathrm{d}x_2\mathrm{d}x_3=\int_SQ_3\left(M_1+M_2+\overline{A}(x)\right)\,\mathrm{d}x_2\mathrm{d}x_3\\
&\geq 2\int_S D^2W(I)[{\rm sym}(M_1),{\rm sym}(M_2+\overline{A}(x))]\,\mathrm{d}x_2\mathrm{d}x_3+\int_SQ_3(M_2+\overline{A}(x))\,\mathrm{d}x_2\mathrm{d}x_3\\
& = \int_SQ_3\left(\left(\begin{array}{c|c|c}
A(x_2e_3+x_3e_3)+g_{{\rm min}} &\partial_2\tilde{\a} &\partial_3\tilde{\a}
\end{array}\right)+\overline{A}(x)\right)\,\mathrm{d}x_2\mathrm{d}x_3.
\end{align*}

(ii) As the problem defining $Q_2(x_1,A)$ is convex, we can characterize the minimizers by the Euler-Lagrange equation. Those read as
\begin{align*}
\begin{cases}
\sum_{j=2}^3\partial_j\partial_{m_{ij}}Q_3\left(\left(\begin{array}{c|c|c}A(x_2e_2+x_3e_3)+g_{\rm min} &\partial_2\a &\partial_3\a\end{array}\right)+\overline{A}(x)\right)=0&\mbox{in $S$,}\\
\sum_{j=2}^3\partial_{m_{ij}}Q_3\left(\left(\begin{array}{c|c|c}A(x_2e_2+x_3e_3)+g_{\rm min} &\partial_2\a &\partial_3\a\end{array}\right)+\overline{A}(x)\right)=0 &\mbox{on $\partial S$,}
\end{cases} \quad i=1,\dots,3,
\end{align*}
where we denote by $\partial_{m_{ij}}$ the partial derivatives of the map $M\mapsto Q_3(M)$ with respect to the matrix entries. 
As $\partial_{m_{ij}}Q_3(\cdot)$ is linear for all $i,j$, it follows that $\a_{{\rm min}}=\a^0+\hat{\a}$ solves the Euler-Lagrange equation, where $\a^0$ is the corresponding minimizer for $A=0$ and $\hat{\a}$ is the minimizer for $\overline{A}=g=0$. 
\\
(iii) Note that $\hat{\a}$ depends linearly on $A$ (see also Remark 3.4 in \cite{MM1}) and $\a^0$ is independent of $A$. Introducing the linear map $A\mapsto M(A)(x_2,x_3)=\left(\begin{array}{c|c|c}A(x_2e_2+x_3e_3)&\partial_2\hat{\a}&\partial_3\hat{\a}\end{array}\right)$, the structure of $Q_2(x_1,A)$ then follows by (ii) on setting
\begin{align*}
&\hat{Q}_2(A)=\int_SQ_3\left(M(A)(x_2,x_3)\right)\,\mathrm{d}x_2\mathrm{d}x_3,\\
&K(x_1):A=2\int_S D^2W(I)\left[M(A)(x_2,x_3),\left(\begin{array}{c|c|c}g_{\rm min}&\partial_2\a^0 &\partial_3\a^0\end{array}\right)+\overline{A}(x)\right]\,\mathrm{d}x_2\mathrm{d}x_3,\\
&q(x_1)=\int_SQ_3\left(\left(\begin{array}{c|c|c}g_{\rm min}&\partial_2\a^0 &\partial_3\a^0\end{array}\right)+\overline{A}(x)\right)\,\mathrm{d}x_2\mathrm{d}x_3.
\end{align*} 
The coercivity of $\hat{Q}_2(A)$ can be checked directly, so we omit the details. Measurability of $K$ and $q$ follows from Lemma \ref{measurable} and Fubini's theorem, while the integrability assertions can be deduced from (i), the boundedness of $\overline{A}$ and (\ref{control}) as the latter implies $\|\nabla\a^0\|_{L^2}\leq C$.
\end{proof}

\begin{remark}\label{infprob}
Given a function $A\in L^2((0,L),\mathbb{M}^{3\times 3}_{skew})$, one can choose a function $\a\in L^2(\Omega,\R^3)$ with $\partial_k\a\in L^2(\Omega,\R^3)$ ($k=2,3$) such that $\alpha(x_{1},\cdot)$ solves \eqref{Q2equiv} with $A=A(x_{1},\cdot)$ for $a.e.\, x_{1}\in(0,L)$. This fact is proved in the appendix (see Lemma \ref{measurable}) since in particular the measurability of $\alpha$ in the product space $\Omega=(0,L)\times S$ requires some care.
\end{remark}

Here and henceforth we will use the notation $R(x_1)=(\partial_1 v(x_1)|d_2(x_1)|d_3(x_1))$. 
We define the limit energy $E_0:W^{1,2}(\Omega,\R^3)\times (L^2(\Omega,\R^3))^2\to [0,+\infty]$ as
\begin{equation}\label{deflimit}
E_0(v,d_2,d_3):=
\begin{cases}
\frac{1}{2}\int_0^LQ_2\left(x_1,R^T(x_1)R^{\prime}(x_1)\right)\,\mathrm{d}x_1-\int_{\Gamma}\langle f(x),v(x)\rangle\,\mathrm{d}\mathcal{H}^2 &\mbox{if $(v,d_2,d_3)\in\mathcal{A}$,}
\\
+\infty &\mbox{otherwise.}
\end{cases}
\end{equation}
Note that the definition above is well-posed since, by the separability of $W^{1,2}(S,\R^3)$, the function $Q_2$ is measurable and, since $R\in SO(3)$ almost everywhere, the matrix $R^TR^{\prime}$ is skew-symmetric. 
We further notice that the limit functional does depend only on $R$. In fact, since for admissible functions $v(x_1)=\int_0^{x_1}R(x_1)e_1\,\mathrm{d}x_1$, one can rewrite the force term as
\begin{equation}\label{forcestructure}
\int_{\Gamma}\langle f(x),v(x)\rangle\,\mathrm{d}\mathcal{H}^2=\left\langle\int_{\Gamma}f(L,x_2,x_3)\,\mathrm{d}\mathcal{H}^2,\int_0^LR(x_1)e_1\,\mathrm{d}x_1\right\rangle.
\end{equation}
We now can state the main theorem of this section.

\begin{theorem}\label{1dlimit}
Let $h_j\to 0$ and assume that (\ref{forcescale}) and (\ref{weakprestrain}) hold. Then we have the following $\Gamma$-convergence result:
\begin{itemize}
\item[(i)]For every sequence $v_j\to v$ in $W^{1,2}(\Omega,\R^3)$ and $\frac{1}{h_j}(\partial_2 v_j , \partial_3 v_j)\to (d_2,d_3)$ in $L^2(\Omega)$, it holds that
\begin{equation*}
E_0(v,d_2,d_3)\leq \liminf_j \frac{1}{h_j^2}E_{h_j}(v_j).
\end{equation*}
\item[(ii)] For every $v\in W^{1,2}(\Omega,\R^3)$, $d_2,d_3\in L^2(\Omega,\R^3)$ there exists a sequence $v_j\in W^{1,2}(\Omega,\R^3)$ such that $v_j\to v$ in $W^{1,2}(\Omega)$ and $\frac{1}{h_j}(\partial_2 v_j,\partial_3 v_j)\to (d_2,d_3)$ in $L^2(\Omega)$ fulfilling
\begin{equation*}
\lim_j\frac{1}{h_j^2}E_{h_j}(v_j)= E_0(v,d_2,d_3).
\end{equation*}
\end{itemize}
\end{theorem}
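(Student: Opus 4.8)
The plan is to follow the by now classical scheme of Mora--M\"uller (\cite{MM1,MM2}), adapted to the presence of the prestrain $A_{h}$ and of the terminal load, treating the two inequalities separately. For the lower bound (i), I would first dispose of the force term: since $f_{h_j}/h_j^2\rightharpoonup f$ in $L^2(\Gamma)$ by \eqref{forcescale} and $v_j\to v$ in $W^{1,2}(\Omega)$ forces $v_j\to v$ strongly in $L^2(\Gamma)$ by continuity of the trace, the weak--strong pairing gives $\frac{1}{h_j^2}\int_\Gamma\langle f_{h_j},v_j\rangle\,\mathrm{d}\mathcal{H}^2\to\int_\Gamma\langle f,v\rangle\,\mathrm{d}\mathcal{H}^2$, so it suffices to handle the bulk term. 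Assuming, as we may, that $\liminf_j h_j^{-2}E_{h_j}(v_j)$ is a finite limit along a subsequence, assumption (ii) on $W$ together with \eqref{weakprestrain} yields the bound \eqref{distest}, and Proposition \ref{finercompactness} then provides rotations $R_{h_j}$ with $\|\nabla_{h_j}v_j-R_{h_j}\|_{L^2}\le Ch_j$ converging, exactly as in Proposition \ref{compact}, to $R=(\partial_1 v|d_2|d_3)$ with $(v,d_2,d_3)\in\mathcal{A}$.

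I would then introduce the scaled strain $G_j:=h_j^{-1}(R_{h_j}^T\nabla_{h_j}v_j-I)$, which is bounded in $L^2$ and hence, up to a subsequence, satisfies $G_j\rightharpoonup G$. Writing $A_{h_j}=I+h_j B_{h_j}$ with $B_{h_j}\to\overline{A}$ a.e.\ and $\|B_{h_j}\|_\infty\le C_0$, frame indifference gives $W(\nabla_{h_j}v_j A_{h_j})=W(I+h_j(G_j+B_{h_j})+h_j^2 G_jB_{h_j})$, so the relevant combined strain is $G_j+B_{h_j}\rightharpoonup G+\overline{A}$. A Taylor expansion of $W$ around $I$ (recall $W(I)=0$ and $DW(I)=0$) produces the quadratic form $\tfrac12 Q_3$; since $W$ is merely $C^2$ near $SO(3)$, this must be carried out on the good set where $h_j|G_j|$ is small (its complement has vanishing measure by Chebyshev, using the $L^2$-bound on $G_j$) and the remaining contribution discarded by nonnegativity of $W$. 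Convexity of $M\mapsto Q_3(\mathrm{sym}(M))$ and weak lower semicontinuity then yield
\[
\liminf_j\frac{1}{h_j^2}\int_\Omega W(\nabla_{h_j}v_j A_{h_j})\dx\ge\frac12\int_\Omega Q_3\bigl(G+\overline{A}\bigr)\dx.
\]
Finally I would identify the structure of $G$: its transverse columns are of the form $\partial_2\alpha,\partial_3\alpha$ for some $\alpha\in L^2((0,L),W^{1,2}(S))$, while the first column equals $A(x_2e_2+x_3e_3)+g$ with $A=R^TR'$, using that $R$ depends only on $x_1$ and $d_k=Re_k$, $d_k'=RAe_k$. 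Minimizing the integrand pointwise in $x_1$ over $\alpha$ and $g$ reproduces $Q_2(x_1,R^TR')$ by \eqref{Q2}, which gives the claimed lower bound.

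For the upper bound (ii), by density of smooth frames and lower semicontinuity of the $\Gamma$-$\limsup$ together with the continuity of $E_0$ along such approximations, it suffices to treat the case where $R=(\partial_1 v|d_2|d_3)$ is smooth and recover the general case by a diagonal argument. For smooth data I set $A=R^TR'$, take $g_{\rm min}$ as in Proposition \ref{ondensity}(i) and an optimal corrector $\alpha=\alpha_{\rm min}$ of \eqref{Q2equiv}, and define
\[
v_j(x)=v(x_1)+h_j\phi(x_1)+h_j\bigl(x_2 d_2(x_1)+x_3 d_3(x_1)\bigr)+h_j^2 R(x_1)\alpha(x_1,x_2,x_3),
\]
where $\phi'=Rg_{\rm min}$ and $\phi(0)=0$. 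A direct computation of $\nabla_{h_j}v_j$, using $R^Td_k'=Ae_k$, shows that $h_j^{-1}(R^T\nabla_{h_j}v_j-I)$ converges to the competitor $(A(x_2e_2+x_3e_3)+g_{\rm min}\,|\,\partial_2\alpha\,|\,\partial_3\alpha)$ realizing $Q_2$; plugging into $W$, the same expansion as above together with dominated convergence gives $h_j^{-2}\int_\Omega W\to\frac12\int_0^L Q_2(x_1,R^TR')\,\mathrm{d}x_1$, while $v_j\to v$ and $h_j^{-1}\partial_k v_j\to d_k$ handle the force term, and choosing $\phi(0)=0$, $\alpha(0,\cdot)=0$ ensures $v_j\in\mathcal{A}_{SO(3)}(\Omega)$, i.e.\ \eqref{weakbc} holds.

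I expect the main obstacles to be, on the lower-bound side, the good-set/bad-set truncation forced by the merely $C^2$ regularity of $W$ near $SO(3)$ and the correct identification of the weak limit $G$ with the $Q_2$-competitor structure; and, on the upper-bound side, the measurability and regularity of the optimal corrector $\alpha$, which is only $L^2$ in $x_1$ (cf.\ Remark \ref{infprob}), necessitating the smoothing-plus-diagonalization step, together with the compatibility of the construction with the boundary condition \eqref{weakbc}.
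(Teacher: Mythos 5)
Your lower bound follows the paper's proof essentially step by step: the force term is removed by weak--strong pairing, compactness comes from Proposition \ref{finercompactness} via Proposition \ref{compact}, the scaled strain $G_j=h_j^{-1}(R_{h_j}^T\nabla_{h_j}v_j-I)$ is introduced, frame indifference reduces the integrand to $W(I+h_j(G_j+B_{h_j})+h_j^2G_jB_{h_j})$ with $G_j+B_{h_j}\rightharpoonup G+\overline{A}$, the truncation/linearization argument (which the paper outsources to \cite{FJM}) yields the quadratic bound, and the identification of the first column of $G$ as in \eqref{Gcol1} together with the Poincar\'e argument for the transverse columns makes $(g,\alpha_{x_1})$ an admissible competitor in \eqref{Q2}, giving $Q_2$ after pointwise minimization. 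This part is fine.

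The upper bound, as written, has a genuine gap. Your recovery sequence contains the term $h_j^2R(x_1)\alpha(x_1,x_2,x_3)$ with $\alpha=\alpha_{\rm min}$ the \emph{optimal} corrector of \eqref{Q2equiv}. By Remark \ref{infprob} (Lemma \ref{measurable}), $\alpha_{\rm min}$ is merely $L^2$ in $x_1$, with only $\partial_2\alpha_{\rm min},\partial_3\alpha_{\rm min}\in L^2(\Omega)$; it has no distributional $x_1$-derivative. Consequently $v_j\notin W^{1,2}(\Omega)$, the ``direct computation of $\nabla_{h_j}v_j$'' cannot be performed, and the sequence is not even admissible for $E_{h_j}$. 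Your preliminary reduction to smooth frames does not cure this: the roughness of $\alpha_{\rm min}$ in $x_1$ is inherited from $\overline{A}$, which is only $L^{\infty}$, so smoothing $R$ leaves the corrector exactly as irregular as before. Two further manifestations of the same problem: ``choosing $\alpha(0,\cdot)=0$'' is not a choice available for the optimal corrector (its trace at $x_1=0$ need not be defined), and for a $\limsup$ inequality one cannot discard bad sets by nonnegativity as in the lower bound, so the Taylor expansion of $W$ requires uniform ($L^\infty$) smallness of the perturbation, which $\partial_k\alpha_{\rm min}\in L^2$ does not provide. The paper resolves all of this by regularizing the corrector itself: $g_{\rm min}$ is approximated by continuous $g_n$, $\alpha$ is mollified (using the extension of Remark \ref{ext}) and multiplied by a cutoff $\theta_n(x_1)$ to restore \eqref{weakbc}, $R$ is approximated by smooth $R_n$, the recovery sequence \eqref{recoveryplan} is built from these regularized objects, and only then does one diagonalize in $n$ and $h$. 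So the ``smoothing-plus-diagonalization'' you flag as an obstacle must be applied to $\alpha$ and $g_{\rm min}$, not (only) to the frame $R$ --- that is where the actual difficulty sits, and your argument as structured does not address it. (A minor point in your favour: your transport $\phi'=Rg_{\rm min}$ is the correct one, since it makes the first column of $h_j^{-1}(R^T\nabla_{h_j}v_j-I)$ converge to $A(x_2e_2+x_3e_3)+g_{\rm min}$ as required.)
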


\begin{proof}
The proof follows the lines of Theorem 3.1 in \cite{MM1}. We will therefore only sketch those arguments of the proof which do not require substantial modifications. Since by our assumptions the force term in $E_{h_{j}}$ is continuous with respect to the chosen convergence, we can assume that $f_h=f=0$.\\
{\bf Lower bound (i)}: We can assume that 
\begin{equation*}
\liminf_j\frac{1}{h_j^2}E_{h_j}(v_j)=\lim_j\frac{1}{h_j^2}E_{h_j}(v_j)=C<+\infty.
\end{equation*}
%, we have to provide the lower bound only for the functional $\tilde E_{h}:{\mathcal A}_{SO(3)}(\Omega)\to[0,+\infty]$ defined as
%\begin{equation*}
%\tilde E_{h}(v):=\int_{\Omega}W(\nabla_hv(x) A_h(x))\,\mathrm{d}x.
%\end{equation*}
Let $R_{j}=R_{h_{j}}$ with $R_{h_{j}}$ the sequence of smooth rotations constructed in Proposition \ref{compact}. We define the sequence $G_j:\Omega\to \mathbb{M}^{3\times 3}$ as
\begin{equation*}
G_j(x):=\frac{R_j^T(x_1)\nabla_{h_j} v_j(x)-I}{h_j}.
\end{equation*} 
Due to the bound (i) in Proposition \ref{finercompactness}, up to subsequences (not relabeled) we have $G_j\rightharpoonup G$ in $L^2(\Omega)$. Using frame-indifference we rewrite the relevant part of the energy as
\begin{equation}\label{relevantpart}
E_{h_j}(v_j)=\int_{\Omega}W\left(I+h_j\left(R_j^T\nabla_{h_j}v_j\frac{(A_{h_j}-I)}{h_j}+G_j\right)\right)\,\mathrm{d}x.
\end{equation}
Since, by $(i)$ in Proposition \ref{finercompactness}, $R^T_j\nabla_{h_j}v_j\to I$ in $L^2(\Omega)$, we deduce from (\ref{weakprestrain}) that 
\begin{equation*}
R_j^T\nabla_{h_j}v_j\frac{(A_{h_j}-I)}{h_j}+G_j\rightharpoonup \overline{A}+G\quad\text{in }L^2(\Omega).
\end{equation*}
The compactness above together with standard linearization arguments (see the proof of Proposition 6.1 (i) in \cite{FJM}) imply that 
\begin{equation*}
\liminf_j\frac{1}{h_j^2}E_{h_j}(v_j)\geq \frac{1}{2}\int_{\Omega}Q_3(G(x)+\overline{A}(x))\,\mathrm{d}x.
\end{equation*}
It remains to identify the matrix $G$ in terms of $R=(\partial_1 v,d_2,d_3)$. Reasoning as in the proof of Theorem 3.1 (i) in \cite{MM1} one infers that 
\begin{equation}\label{Gcol1}
G(x)e_1=G(x_1,0,0)e_1+R^T(x_1)R^{\prime}(x_1)(x_2e_2+x_3e_3).
\end{equation}
To obtain the remaining columns we argue as follows. Let us define the functions
\begin{equation*}
\alpha_j(x)=\frac{R_j^T(x_1)\frac{1}{h_j}v_j(x)-x_2e_2-x_3e_3}{h_j},
\end{equation*}
so that $\partial_k\alpha_j(x)=G_j(x)e_k$ for $k=2,3$. Defining the partially averaged function $\overline{\alpha}_j(x_1)=\int_S\alpha_j(x)\,\mathrm{d}x_2\mathrm{d}x_3$ we infer from the Poincar\'e inequality that, for almost every $x_1\in (0,L)$,
\begin{equation*}
\int_S|\alpha_j(x)-\overline{\alpha}_j(x_1)|^2\,\mathrm{d}x_2\mathrm{d}x_3\leq C\int_S|\partial_2\alpha_j|^2+|\partial_3\alpha_j|^2\,\mathrm{d}x_2\mathrm{d}x_3.
\end{equation*}
Integrating the above inequality over $(0,L)$ yields
\begin{equation*}
\int_{\Omega}|\alpha_j(x)-\overline{\a}_j(x)|^2\,\mathrm{d}x\leq C\int_{\Omega}|\partial_2\alpha_j|^2+|\partial_3\alpha_j|^2\,\mathrm{d}x\leq C,
\end{equation*} 
where we used the $L^2$-bound on $G_j$. Passing to a subsequence we may suppose that $\alpha_j-\overline{\a}_j\rightharpoonup\a$ in $L^2(\Omega)$ for some $\alpha\in L^2(\Omega,\R^3)$. Note that in the sense of distributions we have $\partial_k\alpha=Ge_k$. Moreover, for almost every $x_1\in (0,L)$ we have that the map $\alpha_{x_{1}}:(x_2,x_3)\to\alpha(x_1,x_2,x_3)$ belongs to $W^{1,2}(S,\R^3)$. Therefore, for almost every $x_{1}\in (0,L)$ the pair $(G(x_{1},0,0)e_{1},\alpha_{x_{1}})\in\R^{3}\times W^{1,2}(S,\R^3)$ is admissible in the minimum problem \eqref{Q2} for $A=R^{T}(x_{1})R'(x_{1})$. By this, integrating with respect to $x_{1}$ and using \eqref{Gcol1}, we get
\begin{equation*}
\frac{1}{2}\int_{\Omega}Q_3(G(x)+\overline{A}(x))\,\mathrm{d}x\geq\frac{1}{2}\int_0^LQ_2\left(x_1,R^T(x_1)R^{\prime}(x_1)\right)\,\mathrm{d}x_1,
\end{equation*}
thus concluding the proof of the lower bound.\\

{\bf Upper bound (ii)}: For any $(v,d_2,d_3)\in\mathcal{A}$, by Proposition \ref{ondensity} (i) and Remark \ref{infprob} we can choose $\a\in L^2(\Omega,\R^3)$ with $\partial_k\a\in L^2(\Omega,\R^3)$ for $k=2,3$ such that (abusing notation)
\begin{equation*}
E_0(R)=\frac{1}{2}\int_{\Omega}Q_3
\left(\left(\begin{array}{c|c|c}
R^T(x_1)R^{\prime}(x_1)(x_2e_2+x_3e_3)+g_{\rm{min}}(x_1) &\partial_2\a &\partial_3\a
\end{array}\right)+\overline{A}(x)\right)\,\mathrm{d}x.
\end{equation*}
We will approximate $R,\a$ and $g$ in a similar way as done in the proof of Theorem 3.1 (ii) in \cite{MM1}. First we define $\gamma_n(x_1)=\int_0^{x_1}g_n(s)\,\mathrm{d}s$, where $g_n\in C([0,L],\R^3)$ is a sequence of functions converging to $g_{\rm min}$ in $L^2((0,L))$. In order to approximate $\a$ we consider the standard approximation $\a_n$ obtained by convolution, where we make use of Remark \ref{ext} to have convergence up to the boundary. Then it holds that $\a_n\to\a$ and $\partial_k\a_n\to \partial_k\a$ in $L^2(\Omega)$ for $k=2,3$. We can furthermore keep the boundary condition (\ref{weakbc}) replacing $\a_{n}$ by $\tilde{\a}_n(x)=\a_n(x)\theta_n(x_1)$, where $\theta_n\in C^{\infty}([0,1]),[0,1])$ is a suitable cut-off function.
% such that
%\begin{equation*}
%\theta_j(x_1)=
%\begin{cases}
%0 &\mbox{if $x_1\leq\frac{1}{2j}$ or $x_1\geq L-\frac{1}{2j}$,}\\
%1 &\mbox{if $x_1\in (\frac{1}{j},L-\frac{1}{j})$.}
%\end{cases}
%\end{equation*}
This still preserves the convergence of $\tilde \alpha_{n}$ and of its partial derivatives $\partial_k\tilde{\a}_n$ in $L^2(\Omega)$ to $\alpha$ and $\partial_{k}\alpha$ for $k=2,3$, respectively. Note that since $W^{1,2}((0,L))$ is compactly embedded into $C([0,L])$ we can construct by projection onto $SO(3)$ also a sequence of smooth functions $R_n\in C^{\infty}([0,L],SO(3))$ such that $R_n\to R$ in $W^{1,2}(0,L)$ and uniformly on $[0,L]$. Setting
\begin{equation*}
y_n(x_1):=\int_0^{x_1}R_n(s)e_1\,\mathrm{d}s,\quad\quad (d_k)_n(x_1)=R_n(x_1)e_k,\quad k=2,3,
\end{equation*}
then for fixed $n$ we have $y_n,(d_k)_n\in C^{\infty}([0,L],\R^3)$ for $k=2,3$. Moreover, up to a subsequence (not relabeled) we may suppose by continuity that
\begin{equation*}
\frac{1}{2}\int_{\Omega}Q_3
\left(\left(\begin{array}{c|c|c}
R_{n}^T(x_1)R^{\prime}_{n}(x_1)(x_2e_2+x_3e_3)+g_{n}(x_1) &\partial_2\tilde{\a}_{n} &\partial_3\tilde{\a}_{n}
\end{array}\right)+\overline{A}(x)\right)\,\mathrm{d}x\leq E_0(R)+\frac{1}{n}.
\end{equation*} 
We set
\begin{equation}\label{recoveryplan}
v^n_h(x):=y_n(x_1)+hx_2(d_2)_n(x_1)+hx_3(d_3)_n(x_1)+h\gamma_n(x_1)+h^2R_n(x_1)\tilde{\a}_n(x).
\end{equation}
The regularity of $v^n_{h}$ allows a Taylor expansion of the energy density around the identity. By (\ref{weakprestrain}) and the dominated convergence theorem one obtains (see \cite{MM1} for more details)
\begin{equation*}
\lim_{h\to 0}\frac{1}{h^2}E_{h}(v^n_h)=\frac{1}{2}\int_{\Omega}Q_3\left(\left(\begin{array}{c|c|c} R_n^T(x_1)R_n^{\prime}(x_1)(x_2e_2+x_3e_3)+g_n(x_1) &\partial_2\tilde{\a}_n&\partial_3\tilde{\a}_n\end{array}\right)+\overline{A}(x)\right)\,\mathrm{d}x.
\end{equation*}
Given any sequence $h_j\to 0$, there exists a subsequence $h_{j_n}$ such that, if we define the sequence $v^n_{h}$ as in (\ref{recoveryplan}) with $h\leq h_{j_n}$, it fulfills (\ref{weakbc}) and by the previous reasoning it holds that $\frac{1}{h^2}E_{h}(v^n_h)\leq E_0(R)+\frac{2}{n}$. Finally we set $v_j:=v_{h_j}^{n}$ if $j_n\leq j<j_{n+1}$. Then $\limsup_j\frac{1}{h_j^2}E_{h_j}(v_j)\leq E_0(v,d_2,d_3)$ and $v_j\to v$ in $W^{1,2}(\Omega)$ as well as $1/{h_j}(\partial_2 v_j,\partial_3 v_j)\to (d_2,d_3)$ in $L^2(\Omega)$.
\end{proof}
%\begin{align*}
%\nabla_{h}v_h(x)=&R(x_1)+h\left(\begin{array}{c|c|c}x_2\partial_1 d_2(x_1)+x_3\partial_1 d_3(x_1)&\partial_2\beta(x)&\partial_3\beta(x)
%\end{array}
%\right)\\
%&+h\left(\begin{array}{c|c|c}\partial_1 \gamma(x_1)+h\partial_1\beta(x)&0&0
%\end{array}
%\right),
%\end{align*}
%where $R=(\partial_1 v|d_2|d_3)$. Hence we have
%\begin{align*}
%G_h(x)&:=\frac{R^T(x_1)\nabla_{h}v_j(x)-I}{h}
%\\
%&=R^T(x_1)((x_2\partial_1 d_2(x_1)+x_3\partial_1 d_3(x_1)|\partial_2\beta(x)|\partial_3\beta(x))+(\partial_1 \gamma(x_1)+h\partial_1\beta(x)|0|0))
%\end{align*}
%The higher regularity assumptions guarantee that $G_h$ is uniformly bounded on $\Omega$. For $h$ small enough, we can therefore expand $W$ around the identity and use \eqref{weakprestrain} to conclude that for almost every $x\in\Omega$
%\begin{align*}
%\frac{1}{h^2}W(\nabla_{h}v_h(x)A_{h}(x))&=\frac{1}{h^2}W\left(I+h\left(R^T(x_1)\nabla_{h}v_h(x)\frac{(A_{h}(x)-I)}{h}+G_h\right)\right)
%\\
%&\overset{h\to 0}{\to} \frac{1}{2}Q_3\left(\begin{pmatrix} R^T(x_1)R^{\prime}(x_1)\begin{pmatrix} 0\\x_2\\x_3\end{pmatrix}+\partial_1 \gamma(x_1) &\partial_2\beta(x)&\partial_3\beta(x)\end{pmatrix}+\overline{A}(x)\right)
%\end{align*} 
%Close to the identity we have $W(F)\leq C|F-I|^2$ so that we may use dominated convergence to infer that
\begin{remark}\label{bulkforces}
The $\Gamma$-convergence result still holds if we consider bulk forces of the type $\int_{\Omega}\langle f_h(x),v(x)\rangle\,\mathrm{d}x$ instead of forces acting on the end. The proof is analogous.
\end{remark}
	
%\begin{remark}\label{uniform}
%By choosing an appropriate subsequence $h_{m_j}$ when defining the recovery sequence we can construct it in such a way that $\nabla_{h_j}v_j$ converges even uniformly on $\Omega$ to $(\partial_1v,d_2,d_3)$.
%\end{remark}

\subsection{The isotropic case: intrinsically curved rods}
In this section we treat the case of isotropic materials showing that the asymptotic formula \eqref{Q2} can be further simplified to a Kirchhoff rod-model with intrinsic curvature if we assume for the prestrain matrix $\overline A_{h}$ a special structure arising from a two-layer model (see \eqref{twolayer}).  We begin with some general consideration about isotropic materials. Through all this section we assume that the isotropy assumption
\begin{enumerate}
	\item[(iv)] $W(FR)=W(F)\quad\forall R\in SO(3)$
\end{enumerate}
is satisfied. It is well-known that in this case the quadratic form $Q_{3}$ in \eqref{Q2} simplifies to 
\begin{equation}\label{simplestructure}
Q_3(M)=2\mu|{\rm sym}(M)|^2+\lambda \text{tr}(M)^2,
\end{equation}
for two positive constants $\mu,\lambda$ that are known as Lam\'e constants.

For the sake of simplicity, we assume that the prestrain describes locally an incompressible deformation, that means $\text{det}(\overline A_h)=1$. This implies that
\begin{equation}\label{tracezero}
\text{tr}(\overline{A}(x))=0\quad\text{a.e. in }\Omega.
\end{equation} 
We recall that, due to Proposition \ref{ondensity} (ii), the problem \eqref{Q2equiv} has a minimizer of the form
$\a_{{\rm min}}=\a^{0}+\hat\a$, where $\a^{0}$ minimizes \eqref{Q2}
for $A=0$ and $\hat\a$ is a solution of \eqref{Q2hom}. Since the coercive quadratic form $\hat{Q}_{2}$ in the decomposition \eqref{decomposition} only depends on $\hat \a$, the direct computation in \cite[Remark 3.5]{MM1} shows that
\begin{equation}\label{quadraticpart}
\hat{Q}_2(A)=\mu\frac{3\lambda+2\mu}{\lambda+\mu}\left(a_{12}^2\int_Sx_2^2+a_{13}^2\int_Sx_3^2\right)+\mu \tau_{\scriptscriptstyle{S}} a_{23}^2,
\end{equation}
where the constant $\tau_{\scriptscriptstyle{S}}$ is the so-called torsional rigidity. Introducing the torsion function $\varphi$ of $S$, defined as solution of
\begin{equation}\label{torsionpde}
\begin{cases}
\Delta\varphi=0 &\mbox{in $S$,}\\
\partial_{\nu}\varphi=-\langle(x_3,-x_2)^{T},\nu\rangle&\mbox{on $\partial S$,}
\end{cases}
\end{equation}
the torsional rigidity is given by
\begin{equation}\label{torsionconstant}
\tau_{\scriptscriptstyle{S}}:=\int_S(x_2^2+x_3^2+x_3\partial_2\varphi -x_2\partial_3\varphi )=\int_S [(x_3+\partial_2\varphi)^2+(\partial_3\varphi-x_2)^2]\,,
\end{equation}
where the equivalence between the two definitions follows from \eqref{torsionpde} and integration by parts.

On the other hand, using the trace-free condition \eqref{tracezero}, a direct computation shows that $\alpha^{0}$ solves the following 
system of Euler-Lagrange equations:
\begin{eqnarray}\label{Euler}
\begin{cases}
-\Delta \a_1=\text{div}(\overline{a}_{12}+\overline{a}_{21},\overline{a}_{13}+\overline{a}_{31}) &\mbox{in $S$,}\\ 
\text{div}((2\mu+\lambda)\partial_2\a_2+\lambda\partial_3\a_3,\mu\partial_2\a_3+\mu\partial_3\a_2)=\text{div}(-2\mu\overline{a}_{22}-\lambda g_1,-\overline{a}_{23}-\overline{a}_{32})&\mbox{in $S$,}\\
\text{div}(\mu\partial_2\a_3+\mu\partial_3\a_2,\lambda\partial_2\a_2+(2\mu+\lambda)\partial_3\a_3)=\text{div}(-\overline{a}_{23}-\overline{a}_{32},-2\mu \overline{a}_{33}-\lambda g_1)&\mbox{in $S$,}\\
\partial_{\nu}\a_1=-\langle(\overline{{a}}_{12}+\overline{a}_{21},\overline{a}_{13}+\overline{a}_{31})^{T},\nu\rangle &\mbox{on $\partial S$,}\\
\langle((2\mu+\lambda)\partial_2\a_2+\lambda\partial_3\a_3,\mu\partial_2\a_3+\mu\partial_3\a_2)^{T},\nu\rangle=\langle(-2\mu \overline{a}_{22}-\lambda g_1,-\overline{a}_{23}-\overline{a}_{32})^{T},\nu\rangle&\mbox{on $\partial S$,}\\
\langle(\mu\partial_2\a_3+\mu\partial_3\a_2,\lambda\partial_2\a_2+(2\mu+\lambda)\partial_3\a_3)^{T},\nu\rangle=\langle(-\overline{a}_{23}-\overline{a}_{32},-2\mu \overline{a}_{33}-\lambda g_1)^{T},\nu\rangle&\mbox{on $\partial S$.}
\end{cases}
\end{eqnarray}

We now focus on the particular case when the prestrain has a special two-layer structure. This assumption is motivated by the experiments in \cite{Plos}. There the authors stretch a shorter thin piece of rubber with rectangular cross section until it has the same length as a second one and then they glue them along one side. To describe the effects when an additional force is applied at one end, they use a Kirchhoff model with intrinsic curvature. Here we derive this model from three-dimensional elasticity. The result we present does only depend on the structure of the limit function $\overline{A}$, so that more general hypotheses on the prestrain are possible. For the sake of simplicity we suppose the prestrain $\overline A_{h}$ to be of the form
\begin{equation}\label{twolayer}
\overline A_h(x):=\begin{cases}
{\rm{diag}}(1+h\chi,\frac{1}{\sqrt{1+h\chi}},\frac{1}{\sqrt{1+h\chi}}) &\mbox{if $x_3>0$,}\\
I &\mbox{if $x_3<0$,}
\end{cases}
\end{equation}
where $\chi>0$ describes the effective strength of the stretching. If $\overline A_h$ is of the form (\ref{twolayer}), then $A_h$ fulfills (\ref{weakprestrain}) with
\begin{equation}\label{prestrainexample}
\overline{A}(x):=\begin{cases}
{\rm{diag}}(\chi,-\frac{\chi}{2},-\frac{\chi}{2}) &\mbox{if $x_3>0$,}\\
0 &\mbox{if $x_3<0$}
\end{cases}
\end{equation}
and in particular (\ref{tracezero}) holds. We set 
\begin{equation*}
S^{+}:=S\cap\{x_3>0\}
\end{equation*}
and, in addition to (\ref{sym}), we further assume the following symmetry condition:
\begin{equation}\label{upperlayer}
\int_{S^{+}}x_2\,\mathrm{d}\mathcal{H}^2=0.
\end{equation} 

\begin{proposition}\label{curvaturelimit}
Assume that (\ref{prestrainexample}) and (\ref{upperlayer}) hold and that $W$ is isotropic. Then, there exist positive coefficients $c_{12}, c_{13}, c_{23}$ such that, up to an additive constant, the density $Q_2$ in \eqref{Q2} is given by
\begin{equation}\label{abstractdensity}
Q_2(A)=c_{12}a_{12}^2+c_{13}(a_{13}-k)^2+c_{23}a_{23}^2,
\end{equation}
where
\begin{equation}\label{curvature-prestrain}
k=\chi \frac{\int_{S^{+}}x_3\,\mathrm{d}\mathcal{H}^2}{\int_Sx_3^2\,\mathrm{d}\mathcal{H}^2}
\end{equation}
is the intrinsic curvature.
\end{proposition}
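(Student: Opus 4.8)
The plan is to start from the decomposition of Proposition \ref{ondensity}(iii), $Q_2(x_1,A)=\hat Q_2(A)+K(x_1):A+q(x_1)$, and to exploit that for the two-layer limit field \eqref{prestrainexample} the matrix $\overline A$ does not depend on $x_1$; hence $K$ and $q$ are constant and it suffices to compute them once. The quadratic part is already explicit: since $W$ is isotropic, \eqref{simplestructure} holds and the computation recalled in \eqref{quadraticpart} gives $\hat Q_2(A)=c_{12}a_{12}^2+c_{13}a_{13}^2+c_{23}a_{23}^2$ with $c_{12}=\mu\frac{3\lambda+2\mu}{\lambda+\mu}\int_S x_2^2$, $c_{13}=\mu\frac{3\lambda+2\mu}{\lambda+\mu}\int_S x_3^2$ and $c_{23}=\mu\tau_{S}$, all strictly positive (the last by \eqref{torsionconstant}). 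It therefore remains only to identify the linear term $K:A$; once this is done, completing the square in $a_{13}$ will produce \eqref{abstractdensity}.

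For the linear term I would avoid solving the corrector system \eqref{Euler} explicitly. Writing $B(\cdot,\cdot)=D^2W(I)[\cdot,\cdot]$ for the symmetric bilinear form polarizing $Q_3$ and $\sigma(M)=2\mu\,{\rm sym}(M)+\lambda\,{\rm tr}(M)I$, Proposition \ref{ondensity}(iii) expresses, for each elementary skew matrix $A^{(kl)}=e_k\otimes e_l-e_l\otimes e_k$, the coefficient of $a_{kl}$ as $2\int_S B(M^{(kl)},M_0)$, where $M^{(kl)}=(A^{(kl)}(x_2e_2+x_3e_3)\,|\,\partial_2\hat\alpha\,|\,\partial_3\hat\alpha)$ is the homogeneous (prestrain-free) minimizer of \eqref{Q2hom} and $M_0=(g_{\rm min}|\partial_2\alpha^0|\partial_3\alpha^0)+\overline A$ is the corrector at $A=0$. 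The key reduction is reciprocity: by symmetry of $B$ and the Euler--Lagrange equation of $M^{(kl)}$, namely $\int_S B((0|\partial_2\psi|\partial_3\psi),M^{(kl)})=0$ for all $\psi\in W^{1,2}(S,\R^3)$, applied to $\psi=\alpha^0$, the entire $\alpha^0$-dependent part of $M_0$ drops out. Hence the coefficient reduces to $2\int_S\langle D,\sigma(M^{(kl)})\rangle$, where $D:={\rm diag}\bigl(\chi\mathbf 1_{S^+}-\chi\mathcal H^2(S^+),-\tfrac{\chi}{2}\mathbf 1_{S^+},-\tfrac{\chi}{2}\mathbf 1_{S^+}\bigr)$ collects only the explicit pieces $(g_{\rm min}|0|0)+\overline A$. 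This replaces the unknown prestrain corrector by the classical stress fields of the homogeneous problems.

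Now I would insert the Saint--Venant structure of these stresses — the same fields underlying \eqref{quadraticpart}. For the two bending generators one has the uniaxial stresses $\sigma(M^{(12)})=\mu\frac{3\lambda+2\mu}{\lambda+\mu}\,x_2\,e_1\otimes e_1$ and $\sigma(M^{(13)})=\mu\frac{3\lambda+2\mu}{\lambda+\mu}\,x_3\,e_1\otimes e_1$, while for the torsion generator $\sigma(M^{(23)})$ carries only the shear components $\sigma_{12},\sigma_{13}$. Pairing with the diagonal $D$ then gives: the torsion coefficient vanishes since $\langle D,\sigma(M^{(23)})\rangle=0$ (diagonal against pure shear); the $a_{12}$-coefficient equals $2\mu\frac{3\lambda+2\mu}{\lambda+\mu}\chi\bigl(\int_{S^+}x_2-\mathcal H^2(S^+)\int_S x_2\bigr)$, which vanishes precisely by the symmetry assumptions \eqref{sym} and \eqref{upperlayer}; and the $a_{13}$-coefficient equals $2\mu\frac{3\lambda+2\mu}{\lambda+\mu}\chi\int_{S^+}x_3$, using $\int_S x_3=0$. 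Thus $K:A$ collapses to a single term proportional to $a_{13}$, whose coefficient is exactly $2c_{13}\,k$ with $k=\chi\,\frac{\int_{S^+}x_3}{\int_S x_3^2}$ as in \eqref{curvature-prestrain}.

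Combining with $\hat Q_2$ and completing the square in $a_{13}$ then yields \eqref{abstractdensity} up to the additive constant $q-c_{13}k^2$, the sign of $k$ being fixed by the chosen orientation of the moving frame $R$. The main obstacle is precisely the middle step: a priori the coupling coefficient involves the prestrain corrector $\alpha^0$, whose determination from \eqref{Euler} is not explicit for general $S$ and $S^+$. The reciprocity identity together with the uniaxiality of the bending stresses is what circumvents this, and it simultaneously explains the role of the extra hypothesis \eqref{upperlayer}: it is exactly the condition that annihilates the spurious lateral ($a_{12}$) coupling, so that the prestrain induces a pure intrinsic curvature in the $a_{13}$ direction.
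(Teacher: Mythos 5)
Your proof is correct, and while it follows the paper's high-level strategy --- the decomposition $Q_2(x_1,A)=\hat Q_2(A)+K:A+q$ of Proposition \ref{ondensity}(iii), constancy of $K$ and $q$ since $\overline A$ does not depend on $x_1$, the formula \eqref{quadraticpart} for $\hat Q_2$, and a final completion of the square --- it handles the crucial step, the computation of $K:A$, by a genuinely different mechanism. The paper works with the explicit correctors: it first argues that $\alpha^0_1=0$ may be chosen (because $\overline A$ is diagonal, so the data in the first equation and boundary condition of \eqref{Euler} vanish), inserts the Mora--M\"uller solution \eqref{explicitsol} into \eqref{linearpart}, and checks that the terms involving $\partial_2\alpha^0_2+\partial_3\alpha^0_3$ cancel pointwise between the $\mu$-part and the $\lambda$-part of the integrand, so that only explicit quantities survive. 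Your reciprocity step --- testing the Euler--Lagrange equation of the homogeneous corrector $\hat\alpha$ of \eqref{Q2hom} with $\psi=\alpha^0$, giving $\int_S D^2W(I)\left[M^{(kl)},\left(0|\partial_2\alpha^0|\partial_3\alpha^0\right)\right]\mathrm{d}x_2\mathrm{d}x_3=0$ --- removes all of $\alpha^0$ (first component included) in one stroke, with no structural information on the prestrain corrector and no need for the choice $\alpha^0_1=0$; it would survive, for instance, a prestrain with off-diagonal entries. In exchange you invoke the Saint--Venant stress structure, $\sigma(M^{(12)})=\mu\frac{3\lambda+2\mu}{\lambda+\mu}x_2\,e_1\otimes e_1$, $\sigma(M^{(13)})=\mu\frac{3\lambda+2\mu}{\lambda+\mu}x_3\,e_1\otimes e_1$, and $\sigma(M^{(23)})$ pure shear; these claims are correct and follow from \eqref{explicitsol} together with the torsion function, i.e.\ exactly the classical input already underlying \eqref{quadraticpart}, so no new computation is hidden there. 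Your coefficients agree with the paper's: $K:A=2\chi\mu\frac{3\lambda+2\mu}{\lambda+\mu}\,a_{13}\int_{S^{+}}x_3\,\mathrm{d}\mathcal{H}^2=2c_{13}k\,a_{13}$, the $a_{12}$-coupling being annihilated precisely by \eqref{sym} and \eqref{upperlayer}. One shared cosmetic point: with this sign of the linear term, completing the square literally yields $c_{13}(a_{13}+k)^2$ rather than $c_{13}(a_{13}-k)^2$; as you note, this is a matter of orientation of the frame (it amounts to the relabeling $a_{13}\mapsto-a_{13}$), and since the paper's own computation produces the same sign, it is a shared convention issue rather than a gap in your argument.
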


\begin{proof}
From \eqref{Q2} and (\ref{prestrainexample}) we deduce that limit density does not depend on $x_1$. Referring to \eqref{decomposition}, we notice that
$\hat Q_{2}$ is given by \eqref{quadraticpart}, $q$ affects the energy only by an additive constant, while the linear term is given by
$K:A$ with $K$ independent of $x_{1}$ because of the previous considerations. Using the notation from the proof of Proposition \ref{ondensity}, (\ref{simplestructure}) and (\ref{tracezero}) yield
\begin{align}\label{linearpart}
K:A=&4\mu\int_S{\rm sym}\left(M(A)(x_2,x_3)\right):{\rm sym}\left(\left(\begin{array}{c|c|c}g_{\rm min}&\partial_2\a^0&\partial_3\a^0\end{array}\right)+\overline{A}(x)\right)\,\mathrm{d}\mathcal{H}^2\nonumber\\
&+2\lambda\int_S{\rm tr}(M(A)(x_2,x_3))\,{\rm tr}\left(\left(\begin{array}{c|c|c}g_{\rm min}&\partial_2\a^0&\partial_3\a^0\end{array}\right)\right)\,\mathrm{d}\mathcal{H}^2.
\end{align}
First notice that by \cite[Remark 3.5]{MM1} the second and third components of $\hat{\a}$ are given by
\begin{equation}\label{explicitsol}
\begin{pmatrix}
\hat{\a}_2(x_2,x_3)\\ \hat{\a}_3(x_2,x_3)\end{pmatrix}=-\frac{1}{4}\frac{\lambda}{\lambda+\mu}\begin{pmatrix}a_{12}x^2_2-a_{12}x_3^2+2a_{13}x_2x_3\\
-a_{13}x_2^2+a_{13}x_3^2+2a_{12}x_2x_3
\end{pmatrix}.
\end{equation}
It follows that ${\rm sym}(M(A)(\cdot))_{23}={\rm sym}(M(A)(\cdot))_{32}=0$. Moreover, the solution $\a^{0}$ to the system \eqref{Euler} can be taken with first component $\a^{0}_1=0$ since the non-diagonal terms in $\overline{A}$ are zero, and so are then the right-hand sides in the equation and the boundary condition for $\a^{0}_1$. By Proposition \ref{ondensity} and \eqref{prestrainexample} we further have $g_{{\rm min}}=-\chi {\mathcal L}^{2}(S^{+})e_1$. From all this facts we conclude that in the matrix scalar-product in (\ref{linearpart}) only the diagonal entries give a contribution.  Thus, plugging in (\ref{explicitsol}), the expression in (\ref{linearpart}) simplifies as
\begin{align*}
K:A=&4\mu\int_S (a_{12}x_2+a_{13}x_3)(-\chi {\mathcal L}^{2}(S^{+})+\overline{a}_{11}(x))\,\mathrm{d}\mathcal{H}^2\\
&-\int_S\frac{2\mu\lambda}{\lambda+\mu}(a_{12}x_2+a_{13}x_3)(\partial_2\a^0_2+\partial_3\a^0_3+\overline{a}_{22}(x)+\overline{a}_{33}(x))\,\mathrm{d}\mathcal{H}^2 \\
&+\int_S\frac{2\lambda\mu}{\lambda+\mu}(a_{12}x_2+a_{13}x_3)(\partial_2\a^0_2+\partial_3\a^0_3-\chi {\mathcal L}^{2}(S^{+}))\,\mathrm{d}\mathcal{H}^2
\\
=&2\chi\mu \frac{3\lambda+2\mu}{\lambda+\mu}\int_{S^+}(a_{12}x_2+a_{13}x_3)\,\mathrm{d}\mathcal{H}^2=2\chi\mu \frac{3\lambda+2\mu}{\lambda+\mu}\int_{S^{+}}a_{13}x_3,
\end{align*}
where we used (\ref{sym}), \eqref{prestrainexample} and (\ref{upperlayer}). Note that the explicit expression of $\a^0_2$ and $\a^0_3$ is not needed in the previous calculation, since the corresponding term cancels. The general formula (\ref{abstractdensity}), as well as the formula for $k$, now follows from the above computation and (\ref{quadraticpart}) by completing the squares.
\end{proof}

\begin{remark}
The coefficients of the density (\ref{abstractdensity}) can also be easily obtained from (\ref{quadraticpart}). We will use their explicit form in the final section.
\end{remark}

\begin{remark}\label{curvmin}
The function $Q_2$ in (\ref{abstractdensity}) is minimized pointwise by skew-symmetric matrices of the form
\begin{equation*}
A_{{\rm min}}=k(e_1\otimes e_3-e_3\otimes e_1).
\end{equation*}
Defining the function $(v,d_2,d_3)\in\mathcal{A}$ implicitly via
\begin{equation}\label{curvemin}
R(x_1):=\begin{pmatrix} \cos(kx_1) & 0 &\sin(kx_1)\\ 0 & 1& 0\\-\sin(kx_1) & 0 & \cos(kx_1)\end{pmatrix},
\end{equation}
we have 
\begin{equation*}
R^T(x_1)R^{\prime}(x_1)=A_{{\rm min}}.
\end{equation*}
Hence, with the boundary condition (\ref{weakbc}) or (\ref{clamp}) we have a curved beam as global minimizer. On the other hand, if $(v,d_2,d_3)\in\mathcal{A}$ is a minimizer, then $R^{\prime}=RA_{{\rm min}}$ has a unique solution up to a constant rotation prescribing the initial value $R(0)$. We infer that up to rotation, the curved beam constructed above is the unique minimizer when there are no external forces. By the general theory of $\Gamma$-convergence, it follows that for $h$ small enough, minimizers of the energy $E_h$ are close to a curved beam in the chosen topology. 
\end{remark}

\subsection{Different kind of boundary conditions}\label{sect-boundary}
In this paragraph we shortly discuss the effect of the clamped and weak-clamped boundary conditions \eqref{clamp}-\eqref{rigid} on the limit model. While the limit formula for the energy remains the same as in \eqref{deflimit}, its domain takes instead into account some Dirichlet-type boundary conditions for the strain as the next lemma shows.

\begin{lemma}\label{boundaryeffects}
Let $v_h,v$ and $R=(\partial_1v,d_2,d_3)$ be as in Proposition \ref{compact}.
\begin{itemize}
\item[(i)] If $v_h$ fulfills $(\ref{clamp})$, then $R(0)=R_{0}$,
\item[(ii)] If $v_h$ fulfills $(\ref{clamp})$ and \eqref{clamp-clamp}, then $R(0)=R_{0}$ and $R(L)=R_{L}$,
\item[(iii)] If $v_{h}$ fulfills $(\ref{rigid})$, then $R(0)e_{1}=R(L)e_{1}=e_{1}$.
\end{itemize}
\end{lemma}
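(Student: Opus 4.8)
The plan is to read each limit boundary condition off the behaviour at the two endpoints of the associated rotations $R_h\in C^\infty((0,L),SO(3))$ furnished by Proposition \ref{compact}. Recall that these satisfy $R_h\to R$ uniformly on $[0,L]$ (and weakly in $W^{1,2}(0,L)$), so $R_h(0)\to R(0)$ and $R_h(L)\to R(L)$; it therefore suffices to identify $\lim_h R_h(0)$ and $\lim_h R_h(L)$. The only tool beyond this convergence is the endpoint estimate (iii) of Proposition \ref{finercompactness}, whose proof is \emph{local} near the constrained slice and uses only the pinning of the in-plane derivatives $\tfrac1h\partial_2 v_h,\tfrac1h\partial_3 v_h$ on that slice together with the orientation of $SO(3)$ (note that the normal derivative $\partial_1 v_h$ is already left free by the model condition \eqref{weakbc}). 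I will thus extract from each boundary condition the pinned in-plane derivatives on the relevant slice and feed them into this local estimate.

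For (i) there is nothing new: under \eqref{clamp}, Remark \ref{rem-clamped} gives $|R_h(0)-R_0|\le C\sqrt h$, and $h\to0$ with $R_h(0)\to R(0)$ yields $R(0)=R_0$. In (ii) it remains to treat the end $x_1=L$. Differentiating \eqref{clamp-clamp} in the in-plane variables annihilates the $x$-independent average term, so $\tfrac1h\partial_2 v_h(L,\cdot)=R_Le_2$ and $\tfrac1h\partial_3 v_h(L,\cdot)=R_Le_3$ on $\{L\}\times S$. Normalizing by frame indifference as in Remark \ref{rem-clamped} (replacing $v_h$ by $R_L^Tv_h$) turns these into $e_2,e_3$, whereupon the local estimate underlying Proposition \ref{finercompactness}(iii) applies verbatim at the slice $\{L\}\times S$ and yields $|R_h(L)-R_L|\le C\sqrt h$. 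Letting $h\to0$ with $R_h(L)\to R(L)$ gives $R(L)=R_L$, which together with (i) proves (ii).

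Statement (iii) follows in the same manner, retaining only what \eqref{rigid} provides. At $x_1=0$ one has $\tfrac1h\partial_k v_h(0,\cdot)=M_0^he_k$ and at $x_1=L$ (again discarding the average) $\tfrac1h\partial_k v_h(L,\cdot)=M_L^he_k$ for $k=2,3$, where $M_0^h,M_L^h\in SO(3)$ satisfy $M_0^he_1=M_L^he_1=e_1$. Normalizing each end by $(M_0^h)^T$, resp.\ $(M_L^h)^T$, and invoking the local estimate as above gives $|R_h(0)-M_0^h|\le C\sqrt h$ and $|R_h(L)-M_L^h|\le C\sqrt h$. Applying these to $e_1$ and using $M_0^he_1=M_L^he_1=e_1$ yields $|R_h(0)e_1-e_1|\le C\sqrt h$ and $|R_h(L)e_1-e_1|\le C\sqrt h$; passing to the limit gives $R(0)e_1=R(L)e_1=e_1$. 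Only the first column is controlled, as the weaker constraint dictates; the fact that $M_0^h,M_L^h$ may depend on $h$ is harmless, since we use only the relations $M_0^he_1=M_L^he_1=e_1$.

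The crux is precisely the adaptation of the endpoint estimate to $x_1=L$ and to the weakly clamped case. The $L^2(\Omega)$-convergence $\tfrac1h\partial_k v_h\to d_k$ alone cannot pin the endpoint values of $R$, since these limits possess no trace in the $x_1$-direction (no bound on $\partial_1\tfrac1h\partial_k v_h$ is at hand); it is the quantitative rigidity estimate localized at the constrained slice that supplies the missing normal information. I rely on the fact that this estimate uses only the in-plane boundary derivatives, so that after the frame-indifference normalization it transfers from $x_1=0$ to $x_1=L$ without invoking the reflection $x_1\mapsto L-x_1$, which is orientation-reversing and would demand a transverse symmetry of $S$ not assumed here.
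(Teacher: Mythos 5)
Your proof reaches the correct conclusions, but by a genuinely different route than the paper. The paper never re-opens the proof of Proposition \ref{finercompactness}(iii): to treat the end $x_1=L$ it performs the \emph{orientation-preserving} change of variables $x\mapsto Fx+Le_1$ with $F=\mathrm{diag}(-1,-1,1)\in SO(3)$, defines $m^L_h(x)=FM_{L,h}^T\bigl(v_h(Fx+Le_1)-\int_S v(L,x_2,x_3)\,\mathrm{d}\mathcal{H}^2\bigr)$ on the transformed domain, checks that the new cross-section still has unit area and satisfies \eqref{sym}, and then applies the \emph{statement} of Proposition \ref{finercompactness}(iii) as a black box at $x_1=0$, recovering $R(x_1)=F^TM_L\hat R(L-x_1)F$ after extracting limits $M_{0,h}\to M_0$, $M_{L,h}\to M_L$ by compactness of $SO(3)$. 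You instead claim that the proof of the endpoint estimate (geometric rigidity on a boundary slab, a trace/Poincar\'e estimate that quotients out constants, the moment conditions \eqref{sym}, and the fact that columns $2,3$ of a rotation determine column $1$) is local and endpoint-symmetric, hence applies verbatim at the slice $\{L\}\times S$. This is true, and it buys you two things: no change of variables, and no need to pass to limits of the boundary rotations --- you only use $M_0^he_1=M_L^he_1=e_1$ for each $h$, whereas the paper must first extract $M_0,M_L$. The trade-off is that your argument rests on an assertion about the \emph{interior} of the cited proof (Proposition 4.1 of \cite{MM2}) rather than on its statement; to be fully rigorous you would have to reproduce that local argument at $x_1=L$ (including the remark that any two families $R_h$ satisfying (i)--(ii) of Proposition \ref{finercompactness} agree up to $O(\sqrt h)$ uniformly, so the endpoint estimates are independent of the family chosen), which is exactly the step the paper's change of variables is designed to bypass.

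One side remark of yours is off: you assert that the reflection route ``would demand a transverse symmetry of $S$ not assumed here.'' The paper's map is not the orientation-reversing reflection $\mathrm{diag}(-1,1,1)$ but the rotation $\mathrm{diag}(-1,-1,1)$, and the transformed cross-section $\{(-x_2,x_3):(x_2,x_3)\in S\}$ need not coincide with $S$: it automatically has unit area, Lipschitz boundary, and satisfies \eqref{sym} (these are vanishing odd-moment conditions, preserved under $x_2\mapsto -x_2$), which is all the hypotheses require. So no symmetry of $S$ is needed by the paper's argument either; your stated reason for avoiding it is not a real obstruction, although your own argument stands without it.
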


\begin{proof}
Claim (i) follows immediately from Proposition \ref{finercompactness} and Remark \ref{rem-clamped}.

We now prove (iii) as the argument for (ii) turns out to be a special case. We start with the boundary condition at $x_1=0$. Let $M_{0,h}$ be the corresponding rotations in (\ref{rigid}). Defining $m^0_h=M^T_{0,h}v_h$, it follows from frame indifference that $E_h(m^0_h)\leq Ch^2$ and $m^0_h$ fulfills 
\begin{equation*}
m^0_h(0,x_2,x_3)=(0,hx_2,hx_3)^T.
\end{equation*}
Applying Proposition \ref{compact} and Proposition \ref{finercompactness} we infer that (up to subsequences) $\nabla_hm^0_h$ converges to a function $\tilde R\in W^{1,2}((0,L),SO(3))$ with $\tilde R(0)=I$. On the other hand, by compactness of $v_h$, we have that the sequence of matrices $M_{0,h}$ converges to $M_0\in SO(3)$ with $M_0e_1=e_1$,  due to \eqref{rigid}.
By construction it holds: $R(x_1)=M_0 \tilde R(x_1)$, so that  we deduce $R(0)e_1=M_0\tilde R(0)e_1=e_1$.

To prove that $R(L)e_{1}=e_{1}$, we set $F=\text{diag}(-1,-1,1)\in SO(3)$ and $\Omega^{\prime}=F\Omega+Le_1$. Note that the cross section of $\Omega^{\prime}$ still has unitary area and fulfills (\ref{sym}). We define a new sequence $m^L_h:\Omega^{\prime}\to\R^3$ 
\begin{equation*}
m^L_h(x_1,x_2,x_3)= FM_{L,h}^T\left(v_h(Fx+Le_1)-\int_Sv(L,x_2,x_3)\,\mathrm{d}\mathcal{H}^2\right).
\end{equation*}
Then we have $m_h^L(0,x_2,x_3)=(0,hx_2,hx_3)^T$ and arguing as before we deduce that (up to subsequences) $\nabla_hm^L_h$ converges to a function  $\hat R\in W^{1,2}((0,L),SO(3))$ with $\hat R(0)=I$. By construction it holds now $R(x_1)=F^T M_L \hat R (L-x_1)F$, with $M_L$ the limit of the rotations $M_{L,h}$. It therefore holds $M_L e_1=e_1$, which implies the conclusion by a direct computation.
\end{proof}
As a corollary of Theorem \ref{1dlimit} we have the following convergence result about prescribed boundary conditions.
\begin{corollary}\label{convbc}
Let $h_j\to 0$ and assume that (\ref{weakprestrain}) and (\ref{forcescale}) hold. It holds that	
\begin{itemize}
	\item[(i)] under the boundary conditions (\ref{clamp}), the $\Gamma$-limit of $E_{h}/h^{2}$ is finite only for $(v,d_2,d_3)\in\mathcal{A}$ with $R(0)=R_{0}$,
	\item[(ii)] under the boundary conditions (\ref{clamp}) and (\ref{clamp-clamp}), the $\Gamma$-limit of $E_{h}/h^{2}$ is finite only for $(v,d_2,d_3)\in\mathcal{A}$ with $R(0)=R_{0}$ and $R(L)=R_{L}$.
	\item[(iii)] under the boundary conditions (\ref{rigid}), the $\Gamma$-limit of $E_{h}/h^{2}$ is finite only for $(v,d_2,d_3)\in\mathcal{A}$ with $R(0)e_{1}=R(L)e_{1}=e_{1}$. 
\end{itemize}
In all cases the limit energy is given by $E_0(v,d_2,d_3)$ on the corresponding domain. 	
\end{corollary}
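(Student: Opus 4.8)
The plan is to combine the $\Gamma$-convergence of Theorem \ref{1dlimit} with the boundary-effect analysis of Lemma \ref{boundaryeffects}. For each prescribed set of conditions I regard $E_h/h^2$ as defined on the deformations of $\mathcal{A}_{SO(3)}(\Omega)$ that additionally satisfy \eqref{clamp}, \eqref{clamp}--\eqref{clamp-clamp} or \eqref{rigid} (and equal to $+\infty$ elsewhere), and I prove separately a liminf inequality and the existence of a recovery sequence.

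\emph{Liminf inequality and finiteness of the domain.} Since the constrained functionals dominate the unconstrained $E_h/h^2$, Theorem \ref{1dlimit}(i) immediately gives that the $\Gamma$-liminf dominates $E_0$; in particular the constrained $\Gamma$-limit is $+\infty$ off $\mathcal{A}$. It remains to rule out the elements of $\mathcal{A}$ violating the extra Dirichlet conditions. If $v_j\to v$ with $\tfrac{1}{h_j}(\partial_2 v_j,\partial_3 v_j)\to(d_2,d_3)$ realizes a finite $\Gamma$-liminf, then along the realizing subsequence $v_j$ satisfies the prescribed boundary condition and $\sup_j E_{h_j}(v_j)/h_j^2<+\infty$; hence Proposition \ref{compact} applies and Lemma \ref{boundaryeffects} forces $R(0)=R_0$ in case (i), $R(0)=R_0$ and $R(L)=R_L$ in case (ii), and $R(0)e_1=R(L)e_1=e_1$ in case (iii). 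Thus no target violating these relations can have finite $\Gamma$-liminf, and on the admissible domain the bound by $E_0$ holds.

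\emph{Recovery sequence.} Fix $(v,d_2,d_3)$ in the relevant constrained domain and start from the recovery sequence of Theorem \ref{1dlimit}(ii), built from smooth rotations $R_n\to R$ in $W^{1,2}(0,L)$ and uniformly; I choose the cut-off $\theta_n$ in \eqref{recoveryplan} to vanish near \emph{both} endpoints, so that $v^n_h(0,\cdot)=R_n(0)(0,hx_2,hx_3)^T$ and, using the symmetry \eqref{sym}, $v^n_h(L,\cdot)-\int_S v^n_h(L,\cdot)=R_n(L)(0,hx_2,hx_3)^T$. The frame $R_n$ must then be corrected so that these traces match the prescribed data exactly. In case (i) only the left end is pinned and $R_n(0)\to R_0$, so I replace $R_n$ by $Q_nR_n$ with the constant $Q_n:=R_0R_n(0)^{-1}\to I$; this enforces $R_n(0)=R_0$ and leaves $R_n^TR_n'$, hence the whole energy density, unchanged. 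In cases (ii) and (iii) both endpoints are constrained and a single constant rotation no longer suffices: I use a boundary-layer correction $Q_n\in W^{1,2}([0,L],SO(3))$ equal to $I$ on $[\delta_n,L-\delta_n]$ and equal at $x_1=0,L$ to the rotations carrying $R_n(0),R_n(L)$ onto the prescribed frames (in case (iii), onto any rotations sending $R_n(0)e_1,R_n(L)e_1$ to $e_1$, which exist and tend to $I$ since $R_n(0)e_1,R_n(L)e_1\to e_1$). Replacing $R_n$ by $Q_nR_n$ yields the exact boundary conditions, while the modified curvature
\[
(Q_nR_n)^T(Q_nR_n)'=R_n^TR_n'+R_n^T\bigl(Q_n^TQ_n'\bigr)R_n
\]
differs from $R_n^TR_n'$ by a term whose $L^2$-norm is controlled by $\|Q_n'\|_{L^2}$. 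Choosing $\delta_n\to 0$ slowly enough that $\|Q_n'\|_{L^2}^2\lesssim\bigl(|Q_n(0)-I|^2+|Q_n(L)-I|^2\bigr)/\delta_n\to 0$, both the $W^{1,2}$-convergence $Q_nR_n\to R$ and the energy convergence are preserved, and the resulting sequence recovers $E_0(v,d_2,d_3)$.

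\emph{Main obstacle.} The delicate point is precisely the two-endpoint cases (ii) and (iii): pinning both ends forces the boundary-layer construction, and one must calibrate the layer width $\delta_n$ against the endpoint defects $|R_n(0)-R_0|$, $|R_n(L)-R_L|$ (respectively the $e_1$-defects) so that the correction's derivative is negligible in $L^2$ while the correction itself stays uniformly small. Everything else reduces to a continuity-of-the-quadratic-functional perturbation of the recovery sequence already produced in Theorem \ref{1dlimit}(ii), the force term being continuous under the chosen convergence by \eqref{forcestructure}.
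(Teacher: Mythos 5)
Your proof is correct, and its overall architecture coincides with the paper's: the liminf inequality and the finiteness restriction follow from Theorem \ref{1dlimit}, Proposition \ref{compact} and Lemma \ref{boundaryeffects} exactly as in the paper, and the limsup is obtained by modifying the recovery sequence of Theorem \ref{1dlimit} near $\{0,L\}\times S$ (including the same cut-off trick for $\tilde\alpha_n$ at both ends). Where you diverge is the mechanism for enforcing the endpoint frames. The paper does this at the approximation stage: it invokes the density of smooth $SO(3)$-valued maps with \emph{prescribed trace}, choosing $R_n$ equal to $R_0$ on $(0,\delta_n)$ and to $R_L$ on $(L-\delta_n,L)$ (and, in case (iii), with $(d_2)_n,(d_3)_n$ lying in $e_1^{\perp}$ at the endpoints), so that no perturbation of the energy ever has to be estimated — continuity of the quadratic functional along $R_n\to R$ in $W^{1,2}$ does all the work. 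You instead keep a generic approximating frame and post-multiply by an explicit rotation field $Q_n$: constant in case (i) (where indeed $(Q_nR_n)^T(Q_nR_n)'=R_n^TR_n'$, so nothing needs estimating), and a boundary layer in cases (ii) and (iii), which forces you to verify the quantitative calibration $\|Q_n'\|_{L^2}^2\lesssim(|Q_n(0)-I|^2+|Q_n(L)-I|^2)/\delta_n\to 0$. Both routes are sound; the paper's is shorter because the perturbation estimate is absorbed into a known density statement, while yours is more self-contained — your geodesic-interpolation layer is, in effect, a proof of that density statement for $SO(3)$-valued maps, and it makes the case (iii) construction completely explicit. One cosmetic point: the recovery construction uses a Taylor expansion that needs the frame to be smooth (or at least Lipschitz) for fixed $n$, so you should state that $Q_n$ is chosen smooth (e.g.\ a smoothly reparametrized geodesic interpolation); this costs nothing and closes the only loose end in your argument.
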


\begin{proof}
{\bf Lower bound}: In all the considered cases the lower bound inequality is a consequence of Theorem \ref{1dlimit} and of Lemma \ref{boundaryeffects}.

{\bf Upper bound} In all cases it suffices to modify the recovery sequence given by Theorem \ref{1dlimit} close to $\{0,L\}\times S$. We demonstrate it for (ii) and (iii). As the construction will be local the argument for (i) is contained in (ii).
 
(ii): Let $(v,d_2,d_3)\in\mathcal{A}$ such that $R(0)=R_0$ and $R(L)=R_L$.  By well-known properties of Sobolev functions with prescribed trace, there exists a sequence $R_n  \in C^\infty((0,L), SO(3))$ converging to $R$ in $W^{1,2}$ which additionally satisfies $R_n=R_0$ in $(0,\delta_n)$ as well as $R_n=R_L$ in $(L-\delta_n,L)$ for some $\delta_n>0$. We now construct the recovery sequence $v_h$ as in the proof of Theorem \ref{1dlimit}, that is
\begin{equation}\label{recoveryj}
v^n_h(x)=y_n(x_1)+hx_2(d_{2})_n(x_1)+hx_3(d_3)_n(x_1)+h\gamma_n(x_1)+h^2R_n(x_1)\tilde{\alpha}_n(x)\,.
\end{equation} 
By (\ref{sym}), this sequence fulfills the boundary condition (\ref{clamp}) and (\ref{clamp-clamp}) if 
\begin{equation*}
\begin{split}&(y_n,(d_2)_n,(d_3)_n)(0)=(0,R_0e_2,R_0e_3),\quad\tilde{\alpha}_n(0,x_2,x_3)=0,\quad\gamma_j(0)=0,\\
&((d_2)_n,(d_3)_n)(L)=(R_Le_2,R_Le_3),\quad\tilde{\alpha}_n(L,x_2,x_3)=0.
\end{split}
\end{equation*} 
One can directly check that all the above equalities, up to the last one, hold now by construction. For the condition $\tilde{\alpha}_n(L,x_2,x_3)=0$ it suffices to modify the sequence $\tilde \alpha_n$ in a neighborhood of $L$ by multiplication with a cutoff only depending on $x_1$. As in the proof of Theorem \ref{1dlimit} this does not affect neither the convergence of $\tilde \alpha_n$ nor of the derivatives in the $x_2$ and $x_3$ directions.

(iii): In order to fulfill the boundary condition (\ref{rigid}) for a recovery sequence of the type (\ref{recoveryj}), we can ensure that $\{(d_2)_n(0),(d_3)_n(0),(d_2)_n(L),(d_3)_n(L)\}\in e_1^{\perp}$ again by choosing a suitable approximation $R_n$ of a Sobolev function with prescribed trace. 

In both cases the modification of the recovery sequence does not change the argument for the energy estimates, so that we conclude.

\end{proof}

\section{Approximation of isolated local minimizers}\label{convisolated}
Having identified the $\Gamma$-limit of the three-dimensional model in the chosen scaling regime, we can apply the well-known results on convergence of (almost) global minimizers. However, in practice also local minimizers are of interest. In this subsection we prove that under some lower semicontinuity assumptions for the sequence of energies $E_{h}$ any strict local minimizer $R$  of the limit energy $E_0$ with respect to the $W^{1,2}$-topology can be obtained as limit of a sequence of local minimizers $v_h$ of the three-dimensional energy $E_h$. Local minimality of the deformations $v_h$ has in this case to be understood with respect to the $L^2$-topology on the scaled gradients $\nabla_h v$, which, as seen in Proposition \ref{compact}, arises naturally on the sublevel sets of $\frac1{h^2} E_h$.

To prove this result, we will mainly rely on the general theory on local minimizers and $\Gamma$-convergence (see \cite[Chapter 5]{BrLoc}). At least at a first glance, this would however need a weaker topology and therefore a stronger notion of minimality, quite hard to be checked in practice. The key observation to overcome this difficulty is that the $L^2$-convergence together with the quadratic structure of the energy $E_0$ imply $W^{1,2}$-convergence in the limit variable $R$ (see Proposition \ref{weak-strong} below). The advantage of this approach is that we can provide simple criteria based on the second variation of the limit functional $E_0$ to check local minimality of $R$ exactly in the $W^{1,2}$-topology, thus completely bridging the gap between the $3$-dimensional and the one-dimensional energy.

We start by fixing some  notation in order to deal also with the additional constraints in Corollary \ref{convbc}. We will see the limit energy $E_0$ as defined on $W^{1,2}((0,L), SO(3))$, since it depends only on $R$. For $\mathcal A_{SO(3)}$ as in \eqref{admissible} we will denote with $\mathcal{A}_h\subseteq \mathcal{A}_{SO(3)}$ the domain of the energy $E_h$. This notation accounts both for the case where only \eqref{weakbc} is assumed and then $\mathcal{A}_h=A_{SO(3)}$, as well as for additionally considering the boundary conditions \eqref{clamp}, \eqref{clamp}-\eqref{clamp-clamp}, and \eqref{rigid}, respectively. Accordingly, the domain $\hat{\mathcal A}$ of the limit energy $E_0$ will be either simply given by $W^{1,2}((0,L), SO(3))$, or will also incorporate the Dirichlet-type boundary conditions in (i), (ii), or (iii) in Corollary \ref{convbc}, respectively.

The next proposition is of high importance since it guarantees that strict local minimizers of $E_0$ in the strong topology are also strict local minimizers in a weaker topology.

\begin{proposition}\label{weak-strong}
	Let $E_0$ be defined as in \eqref{deflimit} and let $R$ be a strict local minimizer of $E_0$ in $\hat{\mathcal A}$ with respect to the strong topology of $W^{1,2}((0,L), SO(3))$. Then $R$ is also a strict local minimizer of $E_0$ in $\hat{\mathcal A}$ with respect to the $L^2$-topology.
\end{proposition}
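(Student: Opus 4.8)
The plan is to argue by contradiction, exploiting the coercive quadratic structure of $E_0$ encoded in the decomposition \eqref{decomposition} of Proposition \ref{ondensity} to upgrade $L^2$-convergence of a competing sequence to $W^{1,2}$-convergence. Suppose $R$ were \emph{not} a strict $L^2$-local minimizer in $\hat{\mathcal A}$. Then there would exist $R_n\in\hat{\mathcal A}$ with $R_n\neq R$, $R_n\to R$ in $L^2((0,L))$ and $E_0(R_n)\leq E_0(R)$. First I would derive a uniform $W^{1,2}$-bound on $R_n$. Writing $A_n:=R_n^TR_n'$, which is skew-symmetric, the pointwise identity $|R_n'|=|R_nA_n|=|A_n|$ (valid since $R_n\in SO(3)$ a.e.) reduces the task to bounding $\|A_n\|_{L^2}$. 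Using \eqref{decomposition}, the coercivity $\hat{Q}_2(A)\geq c|A|^2$, the boundedness of the force term (which, by \eqref{forcestructure}, is a continuous linear functional of $\int_0^LR_ne_1$ and hence bounded since $|R_n|\equiv\sqrt{3}$), and the estimate $|\int_0^LK:A_n|\leq\|K\|_{L^2}\|A_n\|_{L^2}$, the inequality $E_0(R_n)\leq E_0(R)$ yields an absorption inequality of the form $\tfrac c2\|A_n\|_{L^2}^2\leq C+C\|A_n\|_{L^2}$, whence $\sup_n\|R_n'\|_{L^2}<+\infty$.

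Consequently $R_n\rightharpoonup R$ weakly in $W^{1,2}$ (the weak limit is forced to coincide with the $L^2$-limit $R$), and by the compact embedding $W^{1,2}((0,L))\hookrightarrow C([0,L])$ also $R_n\to R$ uniformly. The next step is to identify the limit of the quadratic part of the energy. Since $R_n^T\to R^T$ uniformly while $R_n'\rightharpoonup R'$ weakly in $L^2$, the products converge weakly: $A_n=R_n^TR_n'\rightharpoonup R^TR'=:A$ in $L^2$. The force term converges by \eqref{forcestructure} and the $L^2$-convergence of $R_n$, so $E_0(R_n)\leq E_0(R)$ gives $\limsup_n\int_0^LQ_2(x_1,A_n)\,\mathrm{d}x_1\leq\int_0^LQ_2(x_1,A)\,\mathrm{d}x_1$. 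On the other hand, the linear and constant parts in \eqref{decomposition} pass to the limit (the former because $A_n\rightharpoonup A$ and $K\in L^\infty\subset L^2$), while $A\mapsto\int_0^L\hat{Q}_2(A)\,\mathrm{d}x_1$ is convex and thus weakly lower semicontinuous; hence $\liminf_n\int_0^LQ_2(x_1,A_n)\,\mathrm{d}x_1\geq\int_0^LQ_2(x_1,A)\,\mathrm{d}x_1$. Combining the two inequalities forces $\int_0^L\hat{Q}_2(A_n)\,\mathrm{d}x_1\to\int_0^L\hat{Q}_2(A)\,\mathrm{d}x_1$.

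The crux is then to convert this convergence of energies into strong convergence. Expanding $\int_0^L\hat{Q}_2(A_n-A)\,\mathrm{d}x_1$ via the symmetric bilinear form associated with $\hat{Q}_2$ and using both $A_n\rightharpoonup A$ and the just-obtained convergence of the quadratic energies shows $\int_0^L\hat{Q}_2(A_n-A)\,\mathrm{d}x_1\to 0$; coercivity then yields $A_n\to A$ strongly in $L^2$. Finally, from $R_n'=R_nA_n$, the uniform convergence $R_n\to R$ and the strong convergence $A_n\to A$ give $R_n'\to RA=R'$ in $L^2$, so that $R_n\to R$ strongly in $W^{1,2}$. For $n$ large, $R_n$ therefore lies in the $W^{1,2}$-neighborhood on which $R$ is the strict minimizer, and $R_n\neq R$ forces $E_0(R_n)>E_0(R)$, contradicting $E_0(R_n)\leq E_0(R)$. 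I expect the main obstacle to be precisely this weak-to-strong upgrade of the bending variable: the energy depends on $R'$ only through the nonlinear combination $R^TR'$, so one must carefully use the $C^0$-compactness afforded by the one-dimensional Sobolev embedding to justify $A_n\rightharpoonup A$ before the coercivity of $\hat{Q}_2$ can be brought to bear.
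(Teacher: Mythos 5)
Your proposal is correct and takes essentially the same route as the paper: argue by contradiction, use the decomposition of Proposition \ref{ondensity} (iii) and coercivity of $\hat{Q}_2$ to get a uniform $W^{1,2}$ bound, pass to weak convergence of $A_n=R_n^TR_n'$, combine $E_0(R_n)\leq E_0(R)$ with lower semicontinuity to obtain convergence of the quadratic energies, and then upgrade to strong $L^2$ convergence of $A_n$ via coercivity. The only (immaterial) difference is the final step: the paper recovers $R_n'\to R'$ from the norm identity $|R_n'|=|A_n|$ together with uniform convexity of $L^2$, while you use the product formula $R_n'=R_nA_n$ and the uniform convergence of $R_n$ — both are valid.
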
 

\begin{proof}
	We argue by contradiction. Assume that there is a sequence $R_n\in W^{1,2}((0,L),SO(3))$ such that $R_n\to R$ in $L^2((0,L))$ and $E_0(R_n)\leq E_0(R)$. It is enough to show that $R_n\to R$ strongly in $W^{1,2}((0,L))$. 
	By taking advantage of Proposition \ref{ondensity} (iii) we have
	\begin{equation*}
	Q_2(x_1,A)=\hat{Q}_2(A)+K(x_1):A+q(x_1),
	\end{equation*}
	where $\hat{Q}_{2}$ is a coercive quadratic form and $K,q\in L^{\infty}((0,L))$. By this, the definition of $E_0$ and the equality $|R^TR^{\prime}|=|R^{\prime}|$, we obtain that $R_n\rightharpoonup R$ in $W^{1,2}((0,L))$. From the assumption $E_0(R_n)\leq E_0(R)$ and lower semicontinuity we get to
	\begin{equation*}
	\lim_nE_0(R_n)=E_0(R)\,.
	\end{equation*}
	Since the force term converges we deduce that also 
	\begin{equation}\label{energylimit}
	\lim_n\int_0^LQ_2(x_1,R^T_n(x_1)R^{\prime}_n(x_1))\,\mathrm{d}x_1=\int_0^LQ_2(x_1,R^T(x_1)R^{\prime}(x_1))\,\mathrm{d}x_1.
	\end{equation}
	 Applying $L^2$-weak convergence of $R_n^TR_n^{\prime}$ we further infer from (\ref{energylimit}) that
	\begin{equation*}
	\lim_n\int_0^L\hat{Q}_2(R^T_n(x_1)R^{\prime}_n(x_1))\,\mathrm{d}x_1=\int_0^L\hat{Q}_2(R^T(x_1)R^{\prime}(x_1))\,\mathrm{d}x_1.
	\end{equation*}
	By the properties of $\hat{Q}_{2}$ the convergence above implies that $R_n^TR^{\prime}_n\to R^TR^{\prime}$ also strongly in $L^2((0,L))$. Again using the equalities $|R_n^TR_n^{\prime}|=|R_n^{\prime}|$ and $|R^TR^{\prime}|=|R^{\prime}|$ and uniform convexity, it follows that $R^{\prime}_n\to R^{\prime}$ strongly in $L^2((0,L))$. This proves the claim.
\end{proof}

From the previous proposition we deduce the convergence to isolated local minimizers.
\begin{theorem}\label{conv-loc}
Assume that the functional $E_{h}$ defined in (\ref{defenergy}) is lower semicontinuous with respect to weak convergence in $W^{1,2}(\Omega,\R^3)$. Moreover let $E_0$ be defined as in \eqref{deflimit} and $R=(\partial_1 v|d_2|d_3)\in W^{1,2}((0,L),SO(3))$ be a strict local minimizer of $E_0$ in $\hat{\mathcal A}$ with respect to the strong $W^{1,2}$-topology. Then there exists a sequence $v_h$ of local minimizers of $E_h$ in $\mathcal A_h$ such that $v_h\to v$ strongly in $W^{1,2}(\Omega,\R^3)$ and $\nabla_h v_h\to R$ strongly in $L^2(\Omega,\mathbb{M}^{3\times 3})$.
\end{theorem}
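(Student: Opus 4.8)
The plan is to realize the desired $v_h$ as minimizers of $\frac1{h^2}E_h$ over a fixed $L^2$-ball of scaled gradients centered at $R$, and to show, via $\Gamma$-convergence together with the strict minimality of $R$, that these constrained minimizers are eventually interior to the ball and hence genuine local minimizers. This is exactly the scheme underlying the abstract results of \cite{KS} (see also \cite{BrLoc}), which I would adapt to the present $h$-dependent topology on the scaled gradients.

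First I would invoke Proposition \ref{weak-strong} to pass from strict $W^{1,2}$-local minimality to strict $L^2$-local minimality: there is $\bar\delta>0$ such that $E_0(\tilde R)>E_0(R)$ for every $\tilde R\in\hat{\mathcal A}$ with $0<\|\tilde R-R\|_{L^2}\le\bar\delta$. This step is the conceptual heart of the argument, since the $\Gamma$-convergence and the compactness of Proposition \ref{compact} only live in the $L^2$-topology of the scaled gradients, whereas strict minimality is assumed in the stronger $W^{1,2}$-topology. Fixing $\delta\in(0,\bar\delta)$, for each $h$ I would then consider the constrained problem of minimizing $\frac1{h^2}E_h(w)$ over $C_h^\delta:=\{w\in\mathcal A_h:\ \|\nabla_hw-R\|_{L^2(\Omega)}\le\delta\}$.

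Existence of a constrained minimizer $v_h$ would follow by the direct method: the constraint forces $\|\nabla_hw\|_{L^2}\le\delta+\|R\|_{L^2}$, hence, using $|\nabla w|\le|\nabla_h w|$ together with \eqref{weakbc} and Poincar\'e's inequality, uniform $W^{1,2}$-bounds on minimizing sequences; the set $C_h^\delta$ is convex and weakly closed in $W^{1,2}$ (both the trace constraints defining $\mathcal A_h$ and the $L^2$-ball constraint being weakly closed), and $E_h$ is weakly lower semicontinuous by hypothesis. The recovery sequence $\bar v_h$ of Theorem \ref{1dlimit}(ii) for the target $R$ belongs to $C_h^\delta$ for $h$ small and gives $\limsup_h m_h\le E_0(R)$, where $m_h:=\frac1{h^2}E_h(v_h)$ denotes the constrained minimum; in particular $E_h(v_h)\le Ch^2$, so that Proposition \ref{compact} is applicable.

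Finally I would identify the limit and rule out the constraint being active. By Proposition \ref{compact}, up to subsequences $v_h\to v_*$ strongly in $W^{1,2}$ and $\nabla_hv_h\to R_*$ strongly in $L^2$ for some $R_*\in\hat{\mathcal A}$, the boundary conditions passing to the limit via Lemma \ref{boundaryeffects}. Strong $L^2$-convergence and the constraint yield $\|R_*-R\|_{L^2}\le\delta\le\bar\delta$, while the $\Gamma$-liminf inequality of Theorem \ref{1dlimit}(i) gives $E_0(R_*)\le\liminf_h m_h\le E_0(R)$. The strict $L^2$-minimality established in the first step then forces $R_*=R$ (whence $v_*=v$), so that the whole sequence converges: $v_h\to v$ strongly in $W^{1,2}$ and $\nabla_hv_h\to R$ strongly in $L^2$. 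Consequently $\|\nabla_hv_h-R\|_{L^2}\to0<\delta$, so $v_h$ is interior to $C_h^\delta$ for $h$ small; any competitor sufficiently $L^2$-close to $\nabla_hv_h$ therefore still lies in $C_h^\delta$, and $v_h$ is a local minimizer of $E_h$ in $\mathcal A_h$ with respect to the $L^2$-topology on the scaled gradients. I expect the main obstacle to be precisely this interior-point step: it succeeds only because strictness, transferred to the $L^2$-topology by Proposition \ref{weak-strong}, prevents the constrained minimizers from drifting to $\partial C_h^\delta$, a phenomenon that would occur for a merely non-strict local minimizer.
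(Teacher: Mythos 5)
Your proposal is correct and follows essentially the same route as the paper's proof: both implement the Kohn--Sternberg/Braides scheme of minimizing $E_h$ over a closed $L^2$-ball of scaled gradients around $R$, use Proposition \ref{weak-strong} to transfer strict minimality to the $L^2$-topology, apply Proposition \ref{compact} and the $\Gamma$-liminf/recovery-sequence comparison to force the constrained minimizers' limit to coincide with $R$, and conclude that they are eventually interior points, hence genuine local minimizers. The only differences are cosmetic (you argue directly where the paper argues by contradiction, and you spell out the coercivity/weak-closedness details that the paper states summarily), so no gap to report.
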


\begin{proof}
The proof follows in the footsteps of \cite[Theorem 5.1]{BrLoc}.
By Proposition \ref{weak-strong} we deduce from the assumptions that $R$ is a strict local minimizer in the $L^2$-topology. Hence there exists $\delta>0$ such that $E_0(R')>E_0(R)$ for all $\|R'-R\|_{L^2}\leq\delta$.  We now set 
$$
U^h_{\delta}:=\{v\in \mathcal A_h\;:\|\nabla_hv-R\|_{L^{2}}<\delta\}\,.
$$
Note that, by compactness of $SO(3)$, for fixed $h$ the set $\mathcal A_h$ is closed with respect to weak convergence in $W^{1,2}(\Omega,\R^3)$. It also follows that the weak and strong closure of $U^h_{\delta}$ coincide by convexity. Since the functionals $E_{h}$ are lower semicontinuous and coercive with respect to weak convergence by our assumptions, there exists a sequence $v_h$ such that $v_h$ is a minimizer of $E_{h}$ on $\overline{U^h_{\delta}}$. By Proposition \ref{compact}, or Lemma \ref{boundaryeffects}, the scaled gradients $\nabla_h v_h$ converge to some $R'\in\hat{\mathcal{A}}$ strongly in $L^2$. It follows that $\|R'-R\|_{L^2}\leq\delta$. 

Assume now by contradiction that $R'\neq R$. We infer that	
\begin{equation*}
E_0(R)<E_0(R')\leq\liminf_{h}\frac{1}{h^2}E_h(v_h)\leq\limsup_{h}\frac{1}{h^2}\inf_{v\in U^h_{\delta}}E_h(v)\le\limsup_{h}\frac{1}{h^2} E_h(\hat v_h)\leq E_0(R),
\end{equation*}
where $\hat v_h$ is a recovery sequence for $R$ in the sense of Theorem \ref{1dlimit}. Thus we obtain a contradiction and $R'=R$. It follows that $\nabla_h v_h\in U^h_{\delta}$ for $h$ small enough, which is an open set of $\mathcal{A}_h$ in the $W^{1,2}(\Omega,\R^3)$-topology. Hence $v_h$ is indeed a local minimizer. The convergence of $v_h \to v$ follows from the convergence of $\nabla_h v_h$ to $R$ and the condition \eqref{weakbc} which rules out translations.
\end{proof}

%\begin{remark}\label{onlsc}
%A complete characterization of the weak lower semicontinuity of the functional $E_h$ under physical growth conditions on $W$, that is $W(F)\to +\infty$ as $\det(F)\to 0$, is still an open problem. Nevertheless a class of potentials fulfilling this assumption is provided by $W(F)=W_1(F)+W_2(\det(F))$, where $W_1,W_2\geq 0$ and $W_1,W_2$ are convex and lower semicontinuous and $W_1$ fulfills a $p\geq 3$ growth condition from below, that means $W_1(F)\geq C|F|^p-\frac{1}{C}$  (see Theorem 4.5 (i) in \cite{BaMu}).
%%We refer to \cite{Dac} for the different notions of convexity and lower semicontinuity results.
%\end{remark}

In the rest of the Section we aim to show that a solution $R$ of the Euler-Lagrange equations for $E_0$ with positive second variation is a strict local minimizer of $E_0$ with respect to the strong $W^{1,2}$-topology. This result cannot be directly deduced from \cite[Theorem 11]{Bedford}, which deals with more general integrands and consequently requires $W^{1,\infty}$-regularity of the solution and proves local minimality only with respect to the $W^{1,\infty}$-topology.

We start by recalling the expressions for the first and second variation of the limit energy. A quick derivation is presented for the reader's convenience, although some of the calculations are already present in \cite{MM2} (for the first variation) and a formal derivation of the second variation has been shown in \cite{chouaieb}. For notational convenience, we introduce the function $q_2:(0,L)\times\R^3\to [0,+\infty)$ defined implicitly by $q_2(x_1,\omega_{A})=Q_2(x_1,A)$ and denote by $\nabla_yq_2(x_1,A)$ the derivative of $q_2$ with respect to the second variable. We also recall that the tangent space to the space of admissible deformations $\hat{\mathcal A}$ at a configuration $R$ is given by $\mathcal T R$, where $\mathcal T$ is a space of test functions depending on $\hat{\mathcal A}$. In the unconstrained case when $\hat{\mathcal A}=W^{1,2}((0,L), SO(3))$, it is well-known that $\mathcal T=W^{1,2}((0,L),\mathbb{M}^{3\times 3}_{skew})$. If the additional constraints in Corollary \ref{convbc} are taken into account, the following characterizations hold: 
\begin{itemize}
 \item[(i)] when $\hat {\mathcal A}=\{R \in W^{1,2}((0,L), SO(3)): R(0)=R_0\}$ for a fixed $R_0 \in SO(3)$, then
 $$
 \mathcal T=\{B \in W^{1,2}((0,L),\mathbb{M}^{3\times 3}_{skew}): B(0)=0\}\,;
 $$
 \item[(ii)] when $\hat {\mathcal A}=\{R \in W^{1,2}((0,L), SO(3)): R(0)=R_0 \mbox{ and }R(L)=R_L\}$ for  fixed $R_0$, $R_L \in SO(3)$, then
 $
 \mathcal T=W^{1,2}_0((0,L),\mathbb{M}^{3\times 3}_{skew})
 $;
 \vspace*{5pt}
 \item[(iii)] when $\hat {\mathcal A}=\{R \in W^{1,2}((0,L), SO(3)): R(0)e_1=R(L)e_1=e_1\}$ , then
 $$
 \mathcal T=\{B \in W^{1,2}((0,L),\mathbb{M}^{3\times 3}_{skew}): B(0)e_1=B(L)e_1=0\}\,.
 $$
\end{itemize}
In all these cases, $\mathcal T$ is a linear subspace of $W^{1,2}((0,L),\mathbb{M}^{3\times 3}_{skew})$.

\begin{proposition}\label{variations}
Let $E_0$ be defined as in \eqref{deflimit} and $R\in W^{1,2}((0,L),SO(3))$ be a local minimizer of $E_0$ in $\hat{\mathcal A}$ with respect to the strong $W^{1,2}$-topology. Then $R$ satisfies the following first-order necessary condition for every $B\in \mathcal T$:
\begin{equation}\label{weakform}
\int_0^L\langle\nabla_y q_2(x_1,\omega_{A}(x_1)),R^T(x_1)\omega_{B^{\prime}}(x_1)\rangle-2\langle f,(\omega_{B}(x_1)\times R(x_1)e_1)\rangle\,\mathrm{d}x_1=0\,,
\end{equation}
where $A=R^T\,R^{\prime}$. 
\\
Furthermore, if $R\in \hat{\mathcal A}$ solves \eqref{weakform}, the second variation of $E_0$ at $R$ is given by
\begin{align}\label{secondvariation}
D^2E_0(R)[BR,BR]=
\frac{1}{2}\int_0^L\left[\langle \mathbf{C}R^T\omega_{B^{\prime}},R^T\omega_{B^{\prime}}\rangle-\langle R\nabla_yq_2(x_1,\omega_{A}),B\omega_{B^{\prime}}\rangle-2\langle f,B^2Re_1\rangle\right]\,\mathrm{d}x_1
\end{align}
for every $B\in \mathcal T$, where $\mathbf{C}:=D^2_yq_2(x_1,v)$ is a constant matrix.
\end{proposition}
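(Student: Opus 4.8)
The plan is to obtain both variations by differentiating $E_0$ along the admissible one-parameter family $\varepsilon\mapsto R_\varepsilon:=e^{\varepsilon B}R$, where $B\in\mathcal T$ is an arbitrary test field. Since $B$ is skew-symmetric valued, $e^{\varepsilon B(x_1)}\in SO(3)$ pointwise, so $R_\varepsilon\in W^{1,2}((0,L),SO(3))$; moreover, because $B$ satisfies the homogeneous version of the constraints defining $\mathcal T$ (for instance $B(0)=0$ in case (i), $B(0)e_1=B(L)e_1=0$ in case (iii)), one checks directly that $R_\varepsilon\in\hat{\mathcal A}$ for all $\varepsilon$, with $\frac{d}{d\varepsilon}\big|_{\varepsilon=0}R_\varepsilon=BR$. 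The one-dimensional embedding $W^{1,2}((0,L))\hookrightarrow L^\infty$ gives $B\in L^\infty$, so $e^{\pm\varepsilon B}$ and its $\varepsilon$-derivatives are bounded uniformly in $x_1$ on compact $\varepsilon$-intervals; together with the quadratic growth of $Q_2$, the boundedness of $\overline A$ and $\|B'\|_{L^2}<\infty$, this justifies differentiating under the integral sign by dominated convergence, whence $\varepsilon\mapsto E_0(R_\varepsilon)$ is of class $C^2$.

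The central computation is the second-order expansion of $A_\varepsilon:=R_\varepsilon^TR_\varepsilon'$. Writing $R_\varepsilon'=(\partial_{x_1}e^{\varepsilon B})R+e^{\varepsilon B}R'$ and $R_\varepsilon^T=R^Te^{-\varepsilon B}$, one finds $A_\varepsilon=A+R^T\big(e^{-\varepsilon B}\,\partial_{x_1}e^{\varepsilon B}\big)R$ with $A=R^TR'$. I expect the main obstacle to lie exactly here: since $B$ and $B'$ need not commute, one has $\partial_{x_1}e^{\varepsilon B}\neq\varepsilon B'e^{\varepsilon B}$, and a careful Taylor expansion of the exponential gives instead
\begin{equation*}
e^{-\varepsilon B}\,\partial_{x_1}e^{\varepsilon B}=\varepsilon B'+\tfrac{\varepsilon^2}{2}[B',B]+O(\varepsilon^3),
\end{equation*}
so that $\frac{d}{d\varepsilon}\big|_{0}A_\varepsilon=R^TB'R$ and $\frac{d^2}{d\varepsilon^2}\big|_{0}A_\varepsilon=R^T[B',B]R$. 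The second-order commutator term is precisely what produces the cross-product contribution in \eqref{secondvariation}, and keeping track of it correctly is where most of the care is needed.

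To descend to axial vectors I would use the identities $\omega_{R^TSR}=R^T\omega_S$ for $R\in SO(3)$, $S\in\mathbb{M}^{3\times3}_{skew}$, together with $\omega_{[B',B]}=\omega_{B'}\times\omega_B$ and $Bw=\omega_B\times w$. Writing $Q_2(x_1,A)=q_2(x_1,\omega_A)$ and $y(\varepsilon):=\omega_{A_\varepsilon}$, the chain rule yields $\dot y(0)=R^T\omega_{B'}$ and $\ddot y(0)=R^T(\omega_{B'}\times\omega_B)$. Differentiating the force term through its structure \eqref{forcestructure}, which is linear in $R$ via $v(x_1)=\int_0^{x_1}Re_1$, gives first and second $\varepsilon$-derivatives equal to $\int_0^L\langle f,\omega_B\times Re_1\rangle$ and $\int_0^L\langle f,B^2Re_1\rangle$. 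Imposing $\frac{d}{d\varepsilon}\big|_0E_0(R_\varepsilon)=0$ and multiplying by two produces \eqref{weakform}. For the second variation I would exploit that $q_2$ is quadratic in its second argument (because $Q_2$ is quadratic in $A$ by Proposition \ref{ondensity}(iii)), so $D^2_yq_2=\mathbf C$ is constant and
\begin{equation*}
\tfrac{d^2}{d\varepsilon^2}\big|_0 q_2(x_1,y(\varepsilon))=\langle\mathbf C\,R^T\omega_{B'},R^T\omega_{B'}\rangle+\langle\nabla_yq_2(x_1,\omega_A),R^T(\omega_{B'}\times\omega_B)\rangle .
\end{equation*}
Rewriting $\langle\nabla_yq_2,R^T(\omega_{B'}\times\omega_B)\rangle=-\langle R\nabla_yq_2,B\omega_{B'}\rangle$ via the orthogonality of $R$ and $B\omega_{B'}=\omega_B\times\omega_{B'}$, and combining with the $B^2$ force term, gives exactly \eqref{secondvariation}.

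Finally I would explain why the hypothesis that $R$ solve \eqref{weakform} is needed for \eqref{secondvariation} to define an intrinsic quadratic form. Two admissible $C^2$ curves through $R$ with the same velocity $BR$ have accelerations differing by a tangent vector $CR$ with $C\in\mathcal T$ (their normal parts, fixed by $BR$ and the geometry of $SO(3)$, coincide), so the corresponding values of $\frac{d^2}{d\varepsilon^2}\big|_0E_0$ differ by $DE_0(R)[CR]$, which vanishes precisely when $R$ satisfies \eqref{weakform}. Hence at a critical point the number $\frac{d^2}{d\varepsilon^2}\big|_0E_0(e^{\varepsilon B}R)$ is independent of the chosen curve and legitimately defines $D^2E_0(R)[BR,BR]$ on $\mathcal T R$.
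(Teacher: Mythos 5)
Your proposal is correct, and it rests on the same basic variational mechanism as the paper's proof (admissible one-parameter families in $\hat{\mathcal A}$, differentiation under the integral, the identity $\omega_{\tilde R^T S\tilde R}=\tilde R^T\omega_S$, and the quadratic structure of $q_2$ coming from Proposition \ref{ondensity}(iii)), but you organize the variations in a genuinely different way. The paper works from the outset with an \emph{arbitrary} smooth family $\gamma_\varepsilon\in\hat{\mathcal A}$ with $\dot\gamma_0=BR$, sets $B_\varepsilon=\dot\gamma_\varepsilon\gamma_\varepsilon^T$, and proves the identity $\gamma_\varepsilon\dot A_\varepsilon\gamma_\varepsilon^T=B_\varepsilon'$, i.e.\ $\omega_{\dot A_\varepsilon}=\gamma_\varepsilon^T\omega_{B_\varepsilon'}$; differentiating once more produces, besides the terms in \eqref{secondvariation}, the extra integral $\int_0^L\langle\nabla_yq_2(x_1,\omega_A),R^T\omega_{\dot B_0'}\rangle-2\langle f,\omega_{\dot B_0}\times Re_1\rangle\,\mathrm{d}x_1$, which is recognized as the first variation in the direction $\dot B_0\in\mathcal T$ and hence vanishes when $R$ solves \eqref{weakform}. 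You instead fix the concrete family $e^{\varepsilon B}R$, for which $B_\varepsilon\equiv B$, so no such extra term appears; the price is twofold. First, you must Taylor-expand $e^{-\varepsilon B}\partial_{x_1}e^{\varepsilon B}=\varepsilon B'+\tfrac{\varepsilon^2}{2}[B',B]+O(\varepsilon^3)$, and your commutator contribution, via $\omega_{[B',B]}=\omega_{B'}\times\omega_B$, is exactly the paper's $-B\omega_{B'}$ term (indeed $-B\omega_{B'}=-\omega_B\times\omega_{B'}=\omega_{B'}\times\omega_B$), so the two computations agree. Second, since you compute along a single curve, you must separately argue that at a solution of \eqref{weakform} the number $\frac{\mathrm{d}^2}{\mathrm{d}\varepsilon^2}\big|_0E_0$ does not depend on the chosen curve; your argument — accelerations of admissible curves with equal velocity differ by a tangent field $CR$ with $C\in\mathcal T$, and the resulting discrepancy is the first variation in direction $CR$, which vanishes by \eqref{weakform} — is sound, and it is precisely the content that the paper obtains automatically inside its general-family calculation. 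In short, the paper's route makes the role of criticality emerge within one computation valid for all variations, while yours isolates a cleaner explicit computation on the exponential family and moves the dependence on criticality into a transparent geometric well-posedness statement; both are complete proofs of the proposition.
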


\begin{proof}
Fix a local minimizer $R\in \hat{\mathcal A}$. Given $B\in \mathcal T$, for $\e$ small there exists a smoothly varying one-parameter family $\gamma_{\e}\in \hat{\mathcal A}$ such that
$\gamma_0=R$ and $\dot{\gamma}_0=BR$, where $\dot{\gamma}$ denotes differentiation with respect to the parameter $\e$. We set $A_{\e}:=\gamma_{\e}^T\gamma^{\prime}_{\e}$ as well as $B_{\e}=\dot{\gamma}_{\e}\gamma_{\e}^T\in\mathcal{T}$, and observe that $A_0=A$ and $B_0=B$. Exchanging the order of differentiation we get 
\begin{equation*}
\gamma_{\e}\dot{A}_{\e}\gamma_{\e}^T=B_{\e}^T\left(\gamma_{\e}(\gamma_{\e}^{\prime})^T\right)^T+\gamma_{\e}\gamma_{\e}^T\dot{\gamma}_{\e}^{\prime}\gamma_{\e}^T
=B_{\e}\gamma_{\e}(\gamma_{\e}^{\prime})^T+\dot{\gamma}_{\e}^{\prime}\gamma_{\e}^T=\dot{\gamma}_{\e}(\gamma_{\e}^{\prime})^T+\dot{\gamma}_{\e}^{\prime}\gamma_{\e}^T=B_{\e}^{\prime},
\end{equation*}
where used that $B_{\e}$ and $\gamma_{\e}\gamma_{\e}^{\prime}$ are skew-symmetric almost everywhere. Using the general equality $\omega_{\tilde{R}^T\tilde{A}\tilde{R}}=\tilde{R}^T\omega_{\tilde{A}}$, valid for all $\tilde{R}\in SO(3)$ and $\tilde{A}\in\mathbb{M}^{3\times 3}_{skew}$, we have
\begin{equation}\label{omegaAeps}
\omega_{\dot{A}_{\e}}=\gamma_{\e}^T\omega_{B_{\e}^{\prime}}\,.
\end{equation}
Due to Proposition \ref{ondensity} (iii) and the estimate (\ref{infbound}) we may indeed differentiate inside the integral in $E_0(\gamma_\e)$ with respect to $\e$. We then obtain
\begin{align*}
&\frac{\mathrm{d}}{\mathrm{d}\e}\frac{1}{2}\int_0^LQ_2(x_1,\gamma_{\e}^T(x_1) \gamma^{\prime}_{\e}(x_1))-2\langle f,\gamma_{\e}(x_1)e_1\rangle\,\mathrm{d}x_1\\
&=\frac{1}{2}\int_0^L\langle\nabla_y q_2(x_1,\omega_{A_{\e}}(x_1)),\omega_{\dot{A}_{\e}}(x_1)\rangle-2\langle f,\dot{\gamma}_{\e}(x_1)e_1\rangle\,\mathrm{d}x_1.
\end{align*}
Evaluating the above expression at $\e=0$ where it must vanish by local minimality and using \eqref{omegaAeps}, we obtain \eqref{weakform}.

For the second part of the statement, first notice that the claim about $\mathbf{C}$ follows from Proposition \ref{ondensity} (iii). Now consider $R\in \hat{\mathcal A}$ (for the moment, not necessarily a minimizer) and let $\gamma_{\e}$ be a one-parameter family as above. Again we may differentiate inside the integral to obtain
\begin{equation}\label{secondder}
\frac{\mathrm{d}^2}{\mathrm{d}\e^2}E_0(\gamma_{\e})= \frac{1}{2}\int_0^L\langle \mathbf{C}\omega_{\dot{A}_{\e}},\omega_{\dot{A}_{\e}}\rangle +\langle \nabla_yq_2(x_1,\omega_{{A}_{\e}}),\omega_{\ddot{A}_{\e}}\rangle-2\langle f,\ddot{\gamma}_{\e}e_1\rangle\,\mathrm{d}x_1.
\end{equation}
By (\ref{omegaAeps}) and a direct computation we have
\begin{equation*}
\omega_{\ddot{A}_{\e}}=\dot{\gamma}_{\e}^T\omega_{B_{\e}^{\prime}}+\gamma_{\e}^T\omega_{\dot{B}_{\e}^{\prime}},\quad\quad\ddot{\gamma}_{\e}=\dot{B}_{\e}\gamma_{\e}-\dot{\gamma}_{\e}\dot{\gamma}_{\e}^T\gamma_{\e},
\end{equation*} 
which implies by definition of $\gamma_{\e}$ that
\begin{align*}
\omega_{\ddot{A}_0}=R^T(\omega_{\dot{B}_0^{\prime}}-B\omega_{B^{\prime}}),\quad \quad \ddot{\gamma}_0=(\dot{B}_0+B^2)R.
\end{align*}
Hence evaluating (\ref{secondder}) at $\e=0$ we infer by the above equalities and \eqref{omegaAeps} that
\begin{align*}
D^2E_0(R)[BR,BR]=&\frac{1}{2}\int_0^L\langle \mathbf{C}R^T\omega_{B^{\prime}},R^T\omega_{B^{\prime}}\rangle-\langle R\nabla_yq_2(x_1,\omega_{A}),B\omega_{B^{\prime}}\rangle-2\langle f,B^2Re_1\rangle\,\mathrm{d}x_1\\
&+\int_0^L\langle\nabla_yq_2(x_1,\omega_{A}),R^T\omega_{\dot{B}_0^{\prime}}\rangle-2\langle f,(\omega_{\dot{B}_0}\times Re_1)\rangle\,\mathrm{d}x_1
\end{align*} 
If $R$ solves now \eqref{weakform}, then the last integral vanishes since $\dot{B}_0\in \mathcal T$. This proves \eqref{secondvariation}.
\end{proof}

\begin{remark}\label{ELbc}
Note that \eqref{weakform} is the weak formulation of the system
\begin{equation}\label{ODE}
\begin{cases}
&\left(R(x_1)\nabla_y q_2(x_1,\omega_{A}(x_1))\right)^{\prime}=2f\times R(x_1)e_1,\\
& R^{\prime}(x_1)=R(x_1)A(x_1),\\
\end{cases}
\end{equation}
endowed with boundary conditions that depend on the choice of $\hat{\mathcal A}$. In the unconstrained case when we only assume \eqref{weakbc}, system \eqref{ODE} has to be coupled with the natural boundary conditions
\begin{align}\label{neumannbc}
\nabla_yq_2(0,\omega_{A}(0))=\nabla_yq_2(L,\omega_{A}(L))=0.
\end{align}
If we consider the clamped-clamped case \eqref{clamp}-\eqref{clamp-clamp}, then \eqref{neumannbc} is simply replaced by the Dirichlet boundary conditions $R(0)=R_0$ and $R(L)=R_L$ for fixed $R_0$ and $R_L \in SO(3)$. \\
In the case of weak clamping \eqref{rigid}, we have $R(0)e_1=R(L)e_1=e_1$ while $\omega_{B}(0)$ and $\omega_{B}(L)$ are parallel to $e_1$ for all test functions $B \in \mathcal T$. We therefore get the mixed boundary conditions
\begin{align}\label{mixedbc}
R(0)e_1=R(L)e_1=e_1\quad \mbox{ and  }\quad\langle\nabla_yq_2(0,\omega_{A}(0)),e_1\rangle=\langle\nabla_yq_2(L,\omega_{A}(L)),e_1\rangle=0\,.
\end{align}
\end{remark}
%which is, in the space $H^{-1}((0,L),\R^3)$, equivalent to the system

%\begin{equation}\label{ODE}
%\begin{cases}
%&\nabla_y q_2(x_1,\omega_{A}(x_1))^{\prime}=2R^T(x_1)f\times e_1-A(x_1)\nabla q_2(\omega_{A}(x_1)),\\
%& R^{\prime}(x_1)=R(x_1)A(x_1), \quad R(0)=R(L)=I.
%\end{cases}
%\end{equation}

The proof of the announced local minimality criterion is in our case considerably simplified, since we can extend $E_0$ in a neighborhood of $SO(3)$ to a functional $\tilde E_0 \in W^{1,2}((0,L),\mathbb{M}^{3\times 3})$ (see \eqref{auxfun} below) that proves to be twice continuously  Fr\'{e}chet-differentiable in $ W^{1,2}$. This is crucially due to a very special structure: $\tilde E_0$ is quadratic in the derivative $M'$, while the $L^\infty$-compactness provided by the Sobolev embedding allows us to consider a general smooth dependence on $M$. Notice that $C^2$ Fr\'{e}chet-differentiability  is a very strong property, which in general fails for integral functionals on $W^{1,2}$, also if the integrand is smooth (see \cite[Example 2.3]{Noll}). It is indeed a known fact in the literature that a quadratic structure is needed (see again  \cite[Introduction]{Noll}, which also refers to \cite{Nem-Sem}). Since however we did not find a precise statement for the case we will consider, we prefer to give a self-contained proof in the lemma below.

\begin{lemma}\label{frechetdiff}
Let $U\subset W^{1,2}((0,L),\mathbb{M}^{m\times n})$ be an open set and $F:U\to\R$ be a functional of the type 
$$
F(M)=\int_0^L \phi(t,M(t),M^{\prime}(t))\,\mathrm{d}t\,,
$$
for a Carath\'{e}odory integrand $\phi(t, V, W)$ which is smooth in the last two variables, is quadratic in $W$ and fulfills
\begin{equation}
\begin{split}
&|\partial_V \phi(t,V,W)|+|\partial_V^2\phi(t,V,W)|\leq C(1+|W|^2),\\
&|\partial_{W} \phi(t,V,W)|+|\partial_{V}\partial_W\phi(t,V,W)|\leq C(1+|W|),\\
&\partial_{W}^2\phi(t,V,W)=f(t,V)\end{split}\label{bounds}
\end{equation}
with $f(t,\cdot)$ bounded and continuous uniformly in $t$. Then $F$ is twice continuously Fr\'{e}chet-differentiable on $U$.
\end{lemma}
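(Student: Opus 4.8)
The plan is to exhibit explicit candidates for the first and second derivatives, verify that they define bounded (multi)linear forms, establish Gâteaux differentiability by a difference-quotient argument with dominated convergence, and then upgrade to twice continuous Fréchet differentiability by proving that the derivative maps are continuous in operator norm. The preliminary observation used throughout is the one-dimensional Sobolev embedding $W^{1,2}((0,L))\hookrightarrow C([0,L])$, which gives $\|M\|_{\infty}\leq c\,\|M\|_{W^{1,2}}$. Thus the ``$V$-slot'' $M(t)$ enjoys uniform $L^\infty$ control, while the ``$W$-slot'' $M'(t)$ is controlled only in $L^2$; the growth hypotheses \eqref{bounds} are calibrated precisely to this dichotomy.

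For the first derivative I would set
$$
L_M[N]=\int_0^L \partial_V\phi(t,M,M'):N+\partial_W\phi(t,M,M'):N'\,\mathrm{d}t,
$$
where $:$ denotes the Euclidean scalar product on $\mathbb{M}^{m\times n}$. Using $|\partial_V\phi|\leq C(1+|M'|^2)\in L^1$ paired with $N\in L^\infty$, and $|\partial_W\phi|\leq C(1+|M'|)\in L^2$ paired with $N'\in L^2$ via Cauchy--Schwarz, one checks that $L_M\in (W^{1,2})^*$ with operator norm bounded locally uniformly in $M$. Gâteaux differentiability, $\tfrac{\mathrm{d}}{\mathrm{d}s}F(M+sN)|_{s=0}=L_M[N]$, follows by differentiating under the integral sign: the $s$-derivative of $s\mapsto\phi(t,M+sN,M'+sN')$ is dominated, uniformly for $|s|\leq 1$, by an $L^1(0,L)$ function built from $\|N\|_\infty$, $(1+|M'|+|N'|)^2$ and $(1+|M'|+|N'|)|N'|$ through \eqref{bounds}, so dominated convergence applies. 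Continuity of $M\mapsto L_M$ into $(W^{1,2})^*$ is then obtained by splitting $\|L_{M_k}-L_M\|$ into its $\partial_V$- and $\partial_W$-contributions: for $M_k\to M$ in $W^{1,2}$ one has $M_k\to M$ in $L^\infty$ and $M_k'\to M'$ in $L^2$, the pointwise differences of $\partial_V\phi$ and $\partial_W\phi$ tend to $0$ a.e.\ with the dominating functions of \eqref{bounds}, and the generalized dominated convergence theorem (in $L^1$, resp.\ $L^2$) yields convergence uniformly over the unit ball of test functions. Since a Gâteaux derivative that is continuous in operator norm is automatically the Fréchet derivative, this gives $F\in C^1$.

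For the second derivative the natural candidate, with all derivatives evaluated at $(t,M,M')$, is
$$
\mathcal B_M[N_1,N_2]=\int_0^L \partial_V^2\phi\,[N_1,N_2]+\partial_V\partial_W\phi\,[N_1,N_2']+\partial_W\partial_V\phi\,[N_1',N_2]+f(t,M)\,[N_1',N_2']\,\mathrm{d}t.
$$
The terms containing at most one derivative slot are treated exactly as above, since $|\partial_V^2\phi|\leq C(1+|M'|^2)$ pairs with the $L^\infty$ factors $N_i$, and $|\partial_V\partial_W\phi|\leq C(1+|M'|)$ pairs one $L^\infty$ factor with one $L^2$ factor. The decisive term is the purely second-order one, $\int_0^L f(t,M)[N_1',N_2']\,\mathrm{d}t$, where both $N_1'$ and $N_2'$ live only in $L^2$: it is bounded by $\|f\|_\infty\|N_1'\|_2\|N_2'\|_2$, so $\mathcal B_M$ is a bounded bilinear form. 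Identifying $\mathcal B_M$ as the Gâteaux derivative of $M\mapsto L_M$ proceeds by the same difference-quotient and dominated-convergence scheme, and continuity of $M\mapsto\mathcal B_M$ into the space of bounded bilinear forms then upgrades $L$ to a $C^1$ map, i.e.\ $F\in C^2$.

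The main obstacle is precisely the operator-norm continuity of this last term. Here the quadratic structure is indispensable: because $\partial_W^2\phi=f(t,M)$ is \emph{independent of the derivative slot}, continuity reduces to $\|f(\cdot,M_k)-f(\cdot,M)\|_\infty\to 0$, which holds by the uniform continuity of $f(t,\cdot)$ together with the $L^\infty$-convergence $M_k\to M$ supplied by the Sobolev embedding. Had $f$ depended on $M'$, only $M_k'\to M'$ in $L^2$ would be available, the required $L^\infty$-control of the coefficient would fail, and continuity in operator norm would break down; this is exactly the phenomenon that prevents $C^2$ differentiability for general, non-quadratic integrands. Once this point is secured, the remaining verifications are routine applications of generalized dominated convergence.
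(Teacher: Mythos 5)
Your proposal is correct and follows essentially the same route as the paper: the same candidate formulas for $DF$ and $D^2F$, the same pairing of the growth bounds \eqref{bounds} with the one-dimensional Sobolev embedding ($L^\infty$ for undifferentiated slots, $L^2$ for derivative slots), the generalized dominated convergence theorem for continuity, and the identical key observation that the quadratic structure makes $\partial_W^2\phi=f(t,M)$ independent of $M'$, so that operator-norm continuity of the critical term reduces to $\sup_t|f(t,M_k(t))-f(t,M(t))|\to 0$ via uniform convergence of $M_k$. The only cosmetic difference is that you route the second-order step through ``Gâteaux derivative plus operator-norm continuity implies Fréchet,'' whereas the paper directly estimates the remainder $(DF(M+h_1)-DF(M))[h_2]-D^2F(M)[h_1,h_2]$ as a double integral via Fubini and Hölder; the underlying estimates are the same.
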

 
\begin{proof}
It is a standard fact that the assumptions imply $F \in C^1(U)$ with first derivative
\begin{equation*}
DF(M)[h]=\int_0^L\partial_V\phi(t,M(t),M^{\prime}(t))h(t)+\partial_{W}\phi(t,M(t),M^{\prime}(t))h^{\prime}(t)\,\mathrm{d}t.
\end{equation*}
We continue with the second derivative. The natural candidate is given in compact form by
\begin{equation*}
D^2F(M)[h_1,h_2]=\int_0^L\left[D^2\phi(t,M(t),M^{\prime}(t))(h_2(t),h_2^{\prime}(t))\right]:(h_1(t),h_1^{\prime}(t))\,\mathrm{d}t,
\end{equation*}
where the term inside the brackets has to be understood as the product of a tensor and a matrix. Using the bounds (\ref{bounds}) and the Sobolev embedding theorem it follows that $D^2F(M)$ is a continuous bilinear form on $W^{1,2}((0,L),\mathbb{M}^{m\times n})$. Rewriting the difference 
$$A(h):=(DF(M+h_1)-DF(M))[h_{2}]-D^2F(M)[h_1,h_2]$$ as a double integral we infer from Fubini's theorem and H\"older's inequality that
\begin{align*}
&|A(h)|\leq \int_0^1\int_0^L\left|\left[\left(D^2\phi(t,M+sh_1,M^{\prime}+sh_1^{\prime})-D^2\phi(t,M,M^{\prime})\right)(h_2,h_2^{\prime})\right]:(h_1,h_1^{\prime})\right|\,\mathrm{d}t\,\mathrm{d}s
\\
&\leq \|h_1\|_{\infty}\|h_2\|_{\infty}\int_0^1\int_0^L\left|\partial_V^2\phi(t,M+sh_1,M^{\prime}+sh^{\prime}_1)-\partial_V^2\phi(t,M,M^{\prime})\right|\,\mathrm{d}t\,\mathrm{d}s
\\
&+2\max_{i\neq j}\left(\|h_i\|_{\infty}\|h_j^{\prime}\|_{L^2}\right)\int_0^1\left(\int_0^L\left|\partial_V\partial_{W}\phi(t,M+sh_1,M^{\prime}+sh^{\prime}_1)-\partial_V{\partial_{W}}\phi(t,M,M^{\prime})\right|^2\,\mathrm{d}t\right)^{\frac{1}{2}}\mathrm{d}s
\\
&+\|h_1^{\prime}\|_{L^2}\|h_2^{\prime}\|_{L^2}\sup_{t,s}\left|f(t,M(t)+sh_1(t))-f(t,M(t))\right|
\end{align*}
Since we have to estimate the operator norm we can assume that $\|h_2\|_{W^{1,2}}\leq 1$. When $\|h_1\|_{W^{1,2}}\to 0$, the integrals vanish again by the bounds (\ref{bounds}) and the general Lebesgue dominated convergence theorem applied twice. The supremum in the last line vanishes by the assumptions on $f$. Hence $F$ is indeed twice Fr\'{e}chet-differentiable in $M$.

It remains to show that the second derivative is continuous on $U$. To this end take any $h_1,h_2$ such that $\|h_1\|_{W^{1,2}},\|h_2\|_{W^{1,2}}\leq 1$ and suppose that $M_n\to M$ in $W^{1,2}((0,L),\mathbb{M}^{m\times n})$. Then, as above, the difference $\Delta_n(h):=\left(D^2F(M_n)-D^2F(M)\right)[h_1,h_2]$ can be estimated as
\begin{align*}
\left|\Delta_n(h)\right|\leq& \int_0^L\left|\left[\left(D^2\phi(t,M_n,M_n^{\prime})-D^2\phi(t,M,M^{\prime})\right)(h_2,h_2^{\prime})\right]:(h_1,h_1^{\prime})\right|\,\mathrm{d}t\\
\leq& C \int_0^L|\partial_{V}^2\phi(t,M_n,M_n^{\prime})-\partial_V^2\phi(t,M,M^{\prime})|\,\mathrm{d}t+C\sup_{t}|f(t,M_n(t))-f(t,M(t))|
\\
&+ C\left(\int_0^L|\partial_{V}\partial_{W}\phi(t,M_n,M_n^{\prime})-\partial_V\partial_{W}\phi(t,M,M^{\prime})|^2\,\mathrm{d}t\right)^{\frac{1}{2}}.
\end{align*}
Again the right hand side is independent of $(h_1,h_2)$ and vanishes when $n\to\infty$ by the Sobolev embedding theorem and the bounds (\ref{bounds}) combined with the general Lebesgue dominated convergence theorem. Hence we proved the continuity.
\end{proof}

\begin{remark}\label{frechetbc}
The statement of Lemma \ref{frechetdiff} remains valid if $F$ is defined on an open set $U$ with respect to an affine subspace of $W^{1,2}((0,L),\mathbb{M}^{m\times n})$. This will be useful to include boundary conditions.
\end{remark}
Now we can state our main result of this section:

\begin{theorem}\label{mainlocal}
Assume that the functional $E_h$ defined in (\ref{defenergy}) is lower semicontinuous with respect to weak convergence in $W^{1,2}(\Omega,\R^3)$. Moreover let $R=(\partial_1 v|d_2|d_3)\in \hat{\mathcal{A}}$ be a solution of (\ref{weakform}). Assume that $D^2E_0(R)[BR,BR]\geq c\|BR\|^2_{W^{1,2}}$ for some $c>0$ and all $B\in \mathcal{T}$. Then $R$ is a strict local minimizer of $E_0$ with respect to the $L^2$-topology on $\hat{\mathcal{A}}$ and there exists a sequence $v_h$ of local minimizers of $E_h$ in $\mathcal{A}_h$ such that $v_h\to v$ strongly in $W^{1,2}(\Omega,\R^3)$ and $\nabla_h v_h\to R$ strongly in $L^2(\Omega,\mathbb{M}^{3\times 3})$.
\end{theorem}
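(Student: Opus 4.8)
The plan is to split the statement into two essentially independent parts. The substantive part is the purely one-dimensional claim that a solution $R$ of \eqref{weakform} with coercive second variation is a \emph{strict local minimizer of $E_0$ in the strong $W^{1,2}$-topology} on $\hat{\mathcal A}$; once this is in hand, strict $L^2$-local minimality follows immediately from Proposition \ref{weak-strong}, and the existence of the approximating sequence $v_h$ of local minimizers of $E_h$ is exactly the content of Theorem \ref{conv-loc}, whose hypotheses are then met (weak $W^{1,2}$-lower semicontinuity of $E_h$ is assumed here, and strong $W^{1,2}$-strict local minimality of $R$ is what we shall have proved). So the whole difficulty is concentrated in the local minimality criterion.

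To prove it, I would first set up a chart for the constraint manifold near $R$. Given $R'\in\hat{\mathcal A}$ close to $R$ in $W^{1,2}$, the product $Q:=R'R^T$ is $SO(3)$-valued, lies in $W^{1,2}$, and by the Sobolev embedding $W^{1,2}((0,L))\hookrightarrow C([0,L])$ is uniformly close to $I$; hence $B:=\log Q$ is a well-defined skew-symmetric $W^{1,2}$-field with $\|B\|_{W^{1,2}}\simeq\|R'-R\|_{W^{1,2}}$ and $R'=\exp(B)R$. A direct check of the boundary data (using that a rotation fixing $e_1$ has its axis along $e_1$) shows that the boundary conditions defining $\hat{\mathcal A}$ force $B\in\mathcal T$, so that $B\mapsto\exp(B)R$ is a genuine local chart of $\hat{\mathcal A}$ around $R$ modeled on $\mathcal T$, with $R'\neq R$ if and only if $B\neq 0$.

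The core of the argument is then a second-order Taylor expansion performed not intrinsically, but in the ambient space $W^{1,2}((0,L),\mathbb{M}^{3\times 3})$ where the auxiliary extension $\tilde E_0$ lives. By Lemma \ref{frechetdiff}, together with Remark \ref{frechetbc} to accommodate the affine boundary constraints, $\tilde E_0$ is twice continuously Fr\'echet-differentiable on a $W^{1,2}$-neighborhood of $R$, so that, writing $\Delta:=R'-R$,
\[
E_0(R')=\tilde E_0(R)+D\tilde E_0(R)[\Delta]+\tfrac12 D^2\tilde E_0(R)[\Delta,\Delta]+o(\|\Delta\|_{W^{1,2}}^2),
\]
the $o$-remainder being legitimate precisely because $D^2\tilde E_0$ is continuous. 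Expanding $\Delta=(\exp(B)-I)R=BR+\tfrac12 B^2R+o(\|B\|_{W^{1,2}}^2)$ and substituting, the first-order contribution splits as $D\tilde E_0(R)[BR]+\tfrac12 D\tilde E_0(R)[B^2R]+o(\|B\|_{W^{1,2}}^2)$: the tangential term $D\tilde E_0(R)[BR]$ is the first variation \eqref{weakform} and vanishes since $R$ is a critical point, whereas $\tfrac12 D\tilde E_0(R)[B^2R]$ is genuinely of second order and must be retained. The surviving second-order terms are $\tfrac12\bigl(D^2\tilde E_0(R)[BR,BR]+D\tilde E_0(R)[B^2R]\bigr)$, and this is exactly $\tfrac12\,\tfrac{d^2}{d\e^2}\big|_{0}E_0(\exp(\e B)R)$ computed by the chain rule; evaluating the same second $\e$-derivative intrinsically via Proposition \ref{variations} (for the curve $\exp(\e B)R$, whose velocity is $BR$ and whose acceleration is $B^2R$) identifies it with the intrinsic second variation, giving
\[
E_0(R')-E_0(R)=\tfrac12 D^2E_0(R)[BR,BR]+o(\|B\|_{W^{1,2}}^2)\ge\tfrac{c}{2}\|BR\|_{W^{1,2}}^2+o(\|B\|_{W^{1,2}}^2).
\]
Since $\|BR\|_{W^{1,2}}\simeq\|B\|_{W^{1,2}}$, the right-hand side is strictly positive for $B\neq 0$ small, that is, for $R'\neq R$ in a small $W^{1,2}$-ball, which is the desired strict local minimality.

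The main obstacle is the honest control of the non-tangential second-order correction $\tfrac12 D\tilde E_0(R)[B^2R]$ and, above all, the validity of the $o(\|\Delta\|^2)$ Taylor remainder in the $W^{1,2}$-norm: for a generic smooth integral functional on $W^{1,2}$ the second Fr\'echet derivative fails to exist or to be continuous, and it is only the special quadratic-in-$M'$ structure of $\tilde E_0$, exploited in Lemma \ref{frechetdiff}, that rescues the expansion. A secondary, more bookkeeping-type difficulty is verifying that the chart $B\mapsto\exp(B)R$ respects each of the boundary conditions (i)--(iii), i.e. that it indeed takes values in $\mathcal T$, and that the norm equivalences $\|\Delta\|_{W^{1,2}}\simeq\|B\|_{W^{1,2}}\simeq\|BR\|_{W^{1,2}}$ hold uniformly as $R'\to R$.
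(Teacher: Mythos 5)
Your proposal is correct, and the heart of it handles the $SO(3)$-constraint in a genuinely different way from the paper. Both arguments make the same reduction you make (strict $W^{1,2}$-local minimality suffices, by Proposition \ref{weak-strong} and Theorem \ref{conv-loc}), and both hinge on the $C^2$ extension $\tilde E_0$ enabled by Lemma \ref{frechetdiff}; the difference is that the paper is extrinsic where you are intrinsic. The paper defines $\tilde E_0(M)=E_0(\Pi_3(M))+\tfrac12\|M-\Pi_3(M)\|^2_{W^{1,2}}$ on an open subset of the affine space $\mathcal V$ encoding the linearized boundary conditions, checks via Proposition \ref{ondensity} (iii) that this integrand satisfies the bounds \eqref{bounds}, and then takes arbitrary ambient variations $R+\e M$ with $M\in\mathcal V-R$: criticality gives $D\tilde E_0(R)=0$, and the penalty term converts the tangential coercivity hypothesis into full ambient coercivity, $D^2\tilde E_0(R)[M,M]\geq c\|D\Pi_3(R)M\|^2_{W^{1,2}}+\|M-D\Pi_3(R)M\|^2_{W^{1,2}}\geq\tfrac{c}{c+1}\|M\|^2_{W^{1,2}}$, so that $R$ is a strict local minimizer of $\tilde E_0$ in the ambient space by standard Banach-space calculus, hence of $E_0$ on $\hat{\mathcal A}$. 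You never leave the constraint set: the chart $B\mapsto\exp(B)R$, together with the chain-rule identification of $\tfrac12\bigl(D^2\tilde E_0(R)[BR,BR]+D\tilde E_0(R)[B^2R]\bigr)$ with the intrinsic second variation of Proposition \ref{variations}, makes the penalty term and any discussion of normal directions unnecessary. What the paper's route buys is freedom from the nonlinear chart — no matrix logarithm, no norm equivalences $\|B\|_{W^{1,2}}\simeq\|R'-R\|_{W^{1,2}}$, no second-order expansion of $\exp$ in $W^{1,2}$; what your route buys is a conceptually shorter path with no ambient Hessian estimate. Note also that your boundary-condition check for the chart in the weak-clamping case (a rotation fixing $e_1$ has axis $e_1$) is exactly the counterpart of the paper's Lemma \ref{projection}, which guarantees that $\Pi_3$ preserves the condition $Me_1=e_1$.

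One item your write-up leaves implicit, and which a self-contained proof must still supply, is the construction of the extension itself: for your argument $\tilde E_0=E_0\circ\Pi_3$ suffices (the penalty is superfluous for you), but one must then verify, as the paper does for \eqref{auxfun}, that this composed functional — written out, it involves $Q_2(\Pi_3(M)^TD\Pi_3(M)M^{\prime})$ — genuinely satisfies the hypotheses \eqref{bounds} of Lemma \ref{frechetdiff}, using Proposition \ref{ondensity} (iii) and the uniform bounds on the derivatives of $\Pi_3$ near $SO(3)$. The paper's preamble only announces that such an extension exists; the verification lives inside the proof of the theorem, so it is properly part of what you are asked to prove rather than a citable fact.
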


\begin{proof}
Due to Proposition \ref{weak-strong} and Theorem \ref{conv-loc} it is enough to show that under the assumptions $R$ is a strict local minimum of $E_0$ in $\hat{\mathcal{A}}$ with respect to the $W^{1,2}$-topology.

We prove the statement only in the case of the boundary conditions (\ref{rigid}) since they involve some additional difficulties. We introduce the closed affine space
\begin{eqnarray*}
\mathcal{V}=\Big\{M\in W^{1,2}((0,L),\mathbb{M}^{3\times 3}):&&\;M(0)e_1=M(L)e_1=e_{1},\\ 
 &&\langle M(0)e_i,e_1\rangle =\langle M(L)e_i,e_1\rangle=0,\,i=2,3\Big\}.
\end{eqnarray*}
Note that $\hat{\mathcal{A}}\subset\mathcal{V}$ since for $R\in SO(3)$ the condition $Re_1=e_1$ implies the conditions in the second line. Let us denote by $\Pi_3$ the smooth projection onto $SO(3)$ defined on an open neighborhood $U_0$ of $SO(3)$. We may assume that $\det(M)>0$ for all $M\in U_0$. Setting $2\delta:=\dist(SO(3),\partial U_0)>0$ we introduce the auxiliary functional $\tilde{E}_0:U\subset \mathcal{V}\to\R$ given by
\begin{equation}\label{auxfun}
\tilde{E}_0(M)=E_0(\Pi_3(M))+\frac{1}{2}\|M-\Pi_3(M)\|^2_{W^{1,2}},
\end{equation}
where $U:=\{M\in \mathcal{V}:\;\dist(M(x_1),SO(3))<\delta\}$. Note that by the choice of $\delta$ we can assume that the derivatives up to any finite order of the projection operator $\Pi_3$ are uniformly bounded. In particular, by Lemma \ref{projection} and the chain rule and it holds that $\Pi_3(M)\in\hat{\mathcal{A}}$ for all $M\in U$ with $(\Pi_3(M))^{\prime}=D\Pi_3(M)M^{\prime}$, so that $\tilde{E}_0$ is well-defined and finite. Moreover, as $|(\Pi_3(M))^{\prime}|\leq C|M^{\prime}|$ and
\begin{align*}
\tilde{E}_0(M)=\frac{1}{2}\int_0^LQ_2(\Pi_3(M)^TD\Pi_3(M)M^{\prime})+|M^{\prime}-D\Pi_3(M)M^{\prime}|^2+|M-\Pi_3(M)|^2\,\mathrm{d}x_1,
\end{align*}
using Proposition \ref{ondensity} (iii) one can check that $\tilde{E}_0$ fulfills the assumptions of Lemma \ref{frechetdiff}. By Remark \ref{frechetbc}, it is twice continuously Fr\'{e}chet-differentiable on the open set $U$. Thus it is enough to show that the first derivative vanishes while the second is strictly positive definite in $R$. Using standard variations of the form $R+\e M$ with $M\in \mathcal{V}-R$ we can relate the Fr\'{e}chet-derivative of $\tilde{E}_0$ to the Euler-Lagrange equation of the functional $E_0$. Indeed, since $R$ is a solution of (\ref{weakform}) we obtain
\begin{equation*}
D\tilde{E}_0(R)[M]=\frac{\mathrm{d}}{\mathrm{d}\e}\tilde{E}_0(R+\e M)_{|\e=0}=\frac{\mathrm{d}}{\mathrm{d}\e}E_0(\Pi_3(R+\e M))_{|\e=0}=0.
\end{equation*}
Similarly for the second derivative we deduce
\begin{align*}
D^2\tilde{E}_0(R)[M,M]&=\frac{\mathrm{d}^2}{\mathrm{d}\e^2}\tilde{E}_0(R+\e M)_{|\e=0}=\frac{\mathrm{d}^2}{\mathrm{d}\e^2}E_0(\Pi_3(R+\e M))_{|\e=0}+\|M-(D \Pi_3(R) )M\|^2_{W^{1,2}}\\
&\geq c\|(D \Pi_3(R))M\|^2_{W^{1,2}}+\|M-(D \Pi_3(R))M\|^2_{W^{1,2}}\geq \frac{c}{c+1}\|M\|^2_{W^{1,2}}.
\end{align*}
By standard results of calculus the functional $\tilde{E}_0$ has a strict local minimum at $R$, and so does the constrained functional $E_0$.	
\end{proof}

\begin{remark}\label{necessarycondition}
A necessary condition for $R$ to be a $W^{1,2}$-local minimizer in $\hat{\mathcal{A}}$ is given by $D^2E_0(R)[BR,BR]\geq 0$. This follows for instance by applying \cite[Theorem 11]{Bedford}.	
\end{remark}

\section{Stable configurations in the isotropic case}\label{iso}

In this section we apply the previous results to the case of an isotropic material with a prestrain of the form (\ref{twolayer}) and a cross section fulfilling (\ref{upperlayer}). We will investigate the stability of the straight and the helical configurations as the material is subject to a force $fe_{1}$ applied at the free end $\{L\}\times S$ and to suitable boundary conditions. Using (\ref{abstractdensity}) the limit total energy, now denoted by $E_{0}^{f}$ to highlight the dependence on the force parameter $f$, reads as
\begin{equation}\label{example}
E^{f}_0(R)=\frac{1}{2}\int_0^Lc_{12}a_{12}(x_1)^2+c_{13}(a_{13}(x_1)-k)^2+c_{23}a_{23}(x_1)^2-2f\langle e_1,R(x_1)e_1\rangle\,\mathrm{d}x_1,
\end{equation}
where $A(x_1)=R^T(x_1)R^{\prime}(x_1)$ and $f\in\R$. To reduce notation we set $\mathbf{C}=\text{diag}(c_{23},c_{13},c_{12})$. Note that in this case we have
\begin{equation}\label{vectordensity}
q_2(v)=\langle\mathbf{C}v,v\rangle-2k\langle \mathbf{C}e_2,v\rangle+k^2\langle\mathbf{C}e_2,e_2\rangle.
\end{equation}

\subsection{Stability of the straight configuration}

We start by briefly discussing the boundary conditions under which the straight configuration is a local minimizer for the energy in \eqref{example}. If we only assume the boundary condition (\ref{clamp}) with $R_0=I$, the straight configuration $R(x_{1})=I$ cannot be a local minimizer since the natural boundary conditions in (\ref{ODE}) require $a_{13}(L)=k$. Instead, if we consider the boundary conditions $R(0)e_1=R(L)e_1=e_1$ that correspond to (\ref{rigid}) at both ends, it follows again by Remark \ref{ELbc} that the straight configuration is a critical point of $E_{0}$. In particular such a configuration is a critical point of the energy $E_{0}$ under Dirichlet boundary data corresponding to the clamped-clamped conditions \eqref{clamp} and \eqref{clamp-clamp} with $R_{0}=R_{L}=I$. 

In the following Theorem \ref{idstable} we give sufficient and necessary conditions for local stability of the straight configurations in terms of $f$. We notice that in the weak-clamping case \eqref{rigid} we recover the same critical value of the force which was obtained in \cite{Plos} by means of a formal perturbation argument.

In the statement we will make use of the following auxiliary functions
\begin{align}
\begin{split}
g_1(x)&=\left(x-\frac{c_{12}c_{23}}{(c_{13}kL)^2}x^3\right)\sin(x)+2(\cos(x)-1),\\
g_2(x)&=\left(x-\frac{c_{12}c_{23}}{(c_{13}kL)^2}x^3\right)(\cos(x)+1)-2\sin(x).\end{split}\label{funcg}
\end{align}
and of the following simple lemma.

\begin{lemma}\label{xstar}
Let the functions $g_1,g_2$ be defined in \eqref{funcg} and let $a=\frac{c_{12}c_{23}}{(c_{13}kL)^2}$. If $1-4\pi^{2} a\geq 0$ then the functions do not have any common zero in the interval $(\pi,2\pi)$. If instead $1-4\pi^{2} a< 0$ then there exists a unique common zero $x_{*}$ in the interval $(\pi,2\pi)$.
\end{lemma}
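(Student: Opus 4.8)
The plan is to extract a common factor from $g_1$ and $g_2$ via the half-angle identities, thereby reducing the search for common zeros to a single scalar equation. Writing $p(x) := x - a x^3$, so that $g_1(x) = p(x)\sin x + 2(\cos x - 1)$ and $g_2(x) = p(x)(\cos x + 1) - 2\sin x$, I would substitute $\sin x = 2\sin(x/2)\cos(x/2)$, $\cos x - 1 = -2\sin^2(x/2)$ and $\cos x + 1 = 2\cos^2(x/2)$ to obtain
\begin{equation*}
g_1(x) = 2\sin(x/2)\,h(x), \qquad g_2(x) = 2\cos(x/2)\,h(x),
\end{equation*}
where $h(x) := p(x)\cos(x/2) - 2\sin(x/2)$. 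On $(\pi,2\pi)$ we have $x/2 \in (\pi/2,\pi)$, so $\sin(x/2) > 0$ while $\cos(x/2) < 0$, and in particular neither factor vanishes there. Hence $g_1(x) = 0 \iff h(x) = 0$, and whenever $h(x) = 0$ also $g_2(x) = 0$; thus the common zeros of $g_1$ and $g_2$ in $(\pi,2\pi)$ are exactly the zeros of $h$.

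Next, since $\cos(x/2) < 0$ throughout $(\pi,2\pi)$, I would divide by it and study the function with the same zeros,
\begin{equation*}
\psi(x) := \frac{h(x)}{\cos(x/2)} = x - a x^3 - 2\tan(x/2).
\end{equation*}
The decisive point is strict monotonicity: differentiating and using $\sec^2 = 1 + \tan^2$ gives
\begin{equation*}
\psi'(x) = 1 - 3 a x^2 - \sec^2(x/2) = -3 a x^2 - \tan^2(x/2) < 0,
\end{equation*}
where $a > 0$ because $c_{12}, c_{13}, c_{23}, k, L$ are all positive. Thus $\psi$ is strictly decreasing on $(\pi,2\pi)$ and can vanish at most once.

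Finally, I would read off the boundary behaviour. As $x \to \pi^+$ one has $\tan(x/2) \to -\infty$, so $\psi(x) \to +\infty$, whereas $\tan(x/2) \to 0$ as $x \to 2\pi^-$, giving $\psi(2\pi^-) = 2\pi - 8a\pi^3 = 2\pi(1 - 4\pi^2 a)$. Combining this with strict monotonicity yields the dichotomy: if $1 - 4\pi^2 a \geq 0$ then $\psi$ decreases from $+\infty$ to a nonnegative limit and stays strictly positive on the open interval, so $h$ (hence the pair $g_1,g_2$) has no common zero there; if $1 - 4\pi^2 a < 0$ then $\psi$ changes sign, and by the intermediate value theorem together with strict monotonicity there is a unique zero $x_* \in (\pi,2\pi)$. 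I expect no serious obstacle here: the only load-bearing computation is the sign of $\psi'$, where the cancellation $1 - \sec^2 = -\tan^2$ is precisely what turns an a priori indefinite expression into a manifestly negative one. I would therefore present the half-angle factorization and this derivative computation with care, and treat the endpoint limits and the final case distinction as routine.
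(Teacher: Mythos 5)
Your proof is correct, and it takes a genuinely different route from the paper. Your key move is the half-angle factorization $g_1(x)=2\sin(x/2)\,h(x)$, $g_2(x)=2\cos(x/2)\,h(x)$ with $h(x)=(x-ax^3)\cos(x/2)-2\sin(x/2)$, which makes explicit that on $(\pi,2\pi)$ the zero sets of $g_1$, $g_2$ and $h$ coincide, and then reduces everything to the single function $\psi(x)=x-ax^3-2\tan(x/2)$, whose strict monotonicity follows from the cancellation $1-\sec^2(x/2)=-\tan^2(x/2)$; existence and uniqueness of $x_*$ then drop out of the endpoint limits $\psi(\pi^+)=+\infty$ and $\psi(2\pi^-)=2\pi(1-4\pi^2a)$ together with the intermediate value theorem. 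The paper instead works directly with $g_1$ and $g_2$: in the case $1-4\pi^2a\geq0$ it observes $x-ax^3\geq 0$ forces $g_2>0$ on $(\pi,2\pi)$; in the other case it gets existence from $g_2(\pi)=0$, $g_2'(\pi)=2$, $g_2(2\pi)<0$ and the implication $g_2(x_*)=0\Rightarrow g_1(x_*)=0$ (your common factor in disguise), and proves uniqueness by a contradiction argument that needs the holomorphic extension of $g_1,g_2$ to know zeros are isolated, plus differential identities expressing $g_1'$, $g_2'$ back in terms of $g_1$, $g_2$ and careful sign bookkeeping. Your approach buys a cleaner and more robust uniqueness proof (strict monotonicity of a single scalar function, no analyticity or isolated-zeros argument needed), at the modest cost of introducing the half-angle substitution; the paper's argument stays entirely with the original functions but its uniqueness step is the most delicate part of the lemma, which your factorization eliminates altogether.
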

\begin{proof}
First note that $1-4a\pi^2\geq 0$ implies that $x-ax^3\geq 0$ for all $x\in (\pi,2\pi)$, whence $g_2(x)>0$ so that there is no zero in $(\pi,2\pi)$. It remains the case when $1-4\pi^2a<0$. It holds that $g_2(\pi)=0$, $g_2^{\prime}(\pi)=2$ and $g_2(2\pi)=4\pi(1-4\pi^2a)<0$. Therefore there exists $x_*\in(\pi,2\pi)$ such that $g_2(x_*)=0$. Then $(x_*-ax_*^3)=\frac{2\sin(x_*)}{\cos(x_*)+1}$ which yields $g_1(x_*)=0$. We claim that there exists only one zero in $(\pi,2\pi)$. Assume by contradiction that there are at least two zeros. Since $g_1,g_2$ can be extended to holomorphic functions these zeros are isolated. Let $x_1<x_2$ be the two smallest zeros in $(\pi,2\pi)$. Note that $1-ax_{1}^2<0$ and
\begin{align*}
g_1^{\prime}(x)&=g_2(x)-(x-ax^3)+\sin(x)(1-3ax^2),\\
g_2^{\prime}(x)&=-g_1(x)-2+(\cos(x)+1)(1-3ax^2),
\end{align*} 
Hence $g_1^{\prime}(x_1)=-(x_1-ax_1^3)+\sin(x_1)(1-3ax_1^2)>0$ and we conclude that $g_1>0$ for all $x\in(x_1,x_2)$ (as $g_1(x)=0$ implies $g_2(x)=0$ for $x\in(\pi,2\pi)$). It follows that, for all $x\in(x_1,x_2)$, 
\begin{equation*}
g_2^{\prime}(x)\leq -2+(\cos(x)+1)(1-3ax^2)<-2,
\end{equation*}
which is a contradiction.
\end{proof}

\begin{theorem}\label{idstable}
Let $E_{0}^{f}$ be as in \eqref{example} and let $x_{*}$ be as Lemma \ref{xstar}. Then there exists a critical force $f^{crit}$ such that for $f>f^{crit}$ the straight configuration is a strict local minimizer of $E^{f}_{0}$ in the $L^{2}$-topology with the respective boundary conditions.
If instead $f<f^{crit}$ the straight configuration is not a local minimizer. Moreover, it is given by

\begin{eqnarray*}
f^{crit}&=&
\begin{cases}\displaystyle
\begin{cases}\displaystyle
\frac{(c_{13}k)^2}{c_{23}}-\frac{4\pi^2c_{12}}{L^2} &\mbox{if }\displaystyle\frac{(c_{13}kL)^2}{4\pi^2c_{12}c_{23}}\geq 1	
\\ \displaystyle
\max\left\{\frac{(c_{13}k)^2}{c_{23}}-\frac{x_*^2c_{12}}{L^2},-\frac{\pi^2c_{13}}{L^2}\right\} &\mbox{otherwise}\end{cases}
& \hspace{-.4cm}\mbox{for the b.c. $R(0)=R(L)=I$,}
	\\
	\\ \displaystyle
\max\left\{\frac{(c_{13}k)^2}{c_{23}}-\frac{\pi^2c_{12}}{L^2},-\frac{\pi^2 c_{13}}{L^2}\right\}& \hspace{-1.2cm}\mbox{for the b.c. $R(0)e_1=R(L)e_1=e_1$.}
\end{cases}
\end{eqnarray*}	
\end{theorem}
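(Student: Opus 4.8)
The plan is to characterize local minimality of the straight configuration $R\equiv I$ entirely through its second variation and then to translate the resulting positivity conditions into the stated thresholds. As recalled just before the statement, under either pair of boundary conditions $R\equiv I$ solves the weak Euler--Lagrange equation \eqref{weakform} (for $R(0)e_1=R(L)e_1=e_1$ this uses Remark \ref{ELbc}, and the clamped--clamped case $R_0=R_L=I$ is contained in it). Hence, for the sufficiency part, by Theorem \ref{mainlocal} it is enough to verify the coercivity estimate $D^2E_0^f(I)[BR,BR]\ge c\|BR\|_{W^{1,2}}^2$ on the relevant test space $\mathcal T$, while for the necessity part Remark \ref{necessarycondition} shows that exhibiting a single $B\in\mathcal T$ with $D^2E_0^f(I)[BR,BR]<0$ already rules out local minimality. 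So everything reduces to a spectral analysis of this quadratic form.

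First I would compute the form explicitly. Writing $b=\omega_B=(b_1,b_2,b_3)$, evaluating \eqref{secondvariation} at $R=I$ (so $\omega_A=0$ and $\nabla_yq_2(0)=-2kc_{13}e_2$ by \eqref{vectordensity}) and using $\omega_{B'}=b'$, $B\omega_{B'}=b\times b'$ and $|Be_1|^2=b_2^2+b_3^2$, one gets, up to a positive factor,
\[
D^2E_0^f(I)[BR,BR]=\int_0^L c_{23}(b_1')^2+c_{13}(b_2')^2+c_{12}(b_3')^2+kc_{13}\big(b_3b_1'-b_1b_3'\big)+f\,(b_2^2+b_3^2)\,\mathrm{d}x_1.
\]
The key structural observation is that the $b_2$--component decouples, leaving the torsion--bending pair $(b_1,b_3)$ coupled through the curvature term. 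The $b_2$--block $\int c_{13}(b_2')^2+f\,b_2^2$, with $b_2(0)=b_2(L)=0$ in both geometries, is positive definite exactly when $f>-\pi^2c_{13}/L^2$ by the Dirichlet Wirtinger inequality, producing the term $-\pi^2c_{13}/L^2$ in $f^{crit}$. Next I would treat the $(b_1,b_3)$--block by eliminating $b_1$, integrating the curvature term by parts (the boundary contributions vanish in both cases) to rewrite it as $\int c_{23}(b_1')^2+2kc_{13}b_1'b_3+c_{12}(b_3')^2+f\,b_3^2$. In the weak--clamping case $b_1$ is free at the endpoints, so minimizing pointwise in $b_1'=-\tfrac{kc_{13}}{c_{23}}b_3$ yields the local reduced form $\int c_{12}(b_3')^2+\big(f-\tfrac{(c_{13}k)^2}{c_{23}}\big)b_3^2$ with $b_3(0)=b_3(L)=0$, positive definite iff $f>\tfrac{(c_{13}k)^2}{c_{23}}-\pi^2c_{12}/L^2$; combining with the $b_2$--threshold gives $f^{crit}$ as the maximum of the two. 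Here I would also flag the one genuine subtlety: the form vanishes on infinitesimal rotations about $e_1$ (constant $b=b_1e_1$), reflecting the residual $SO(2)$ invariance of $E_0^f$ about the load axis, so strictness must be read modulo this symmetry, i.e.\ after restricting $\mathcal T$ to a complement of this one--dimensional kernel, on which the above positivity does give coercivity via Poincaré.

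The clamped--clamped case is where the real work lies and is the expected main obstacle. Now $b_1(0)=b_1(L)=0$ forces the constraint $\int_0^Lb_1'\,\mathrm{d}x_1=0$, so the pointwise minimization is replaced by a constrained one, $b_1'=-\tfrac{kc_{13}}{c_{23}}(b_3-\bar b_3)$ with $\bar b_3=\tfrac1L\int b_3$, and the reduced functional acquires a \emph{nonlocal} term,
\[
J(b_3)=\int_0^L c_{12}(b_3')^2+\Big(f-\tfrac{(c_{13}k)^2}{c_{23}}\Big)b_3^2\,\mathrm{d}x_1+\tfrac{(c_{13}k)^2}{c_{23}L}\Big(\int_0^Lb_3\,\mathrm{d}x_1\Big)^2,\qquad b_3(0)=b_3(L)=0.
\]
Positivity of $J$ is lost at a zero mode, i.e.\ a nontrivial solution of its Euler--Lagrange equation $-c_{12}b_3''+\mu b_3=-C$ with $\mu=f-\tfrac{(c_{13}k)^2}{c_{23}}<0$ and $C=\tfrac{(c_{13}k)^2}{c_{23}L}\int b_3$, under the Dirichlet conditions and the self-consistency for $C$. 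Solving explicitly and setting $x=L\sqrt{-\mu/c_{12}}$, the solvability condition reduces to the vanishing of the $3\times3$ determinant in the coefficients of the constant, $\cos$ and $\sin$ parts, which equals up to sign $g_1(x)$; equivalently $x-ax^3=2\tan(x/2)$ with $a=\tfrac{c_{12}c_{23}}{(c_{13}kL)^2}$, the common form of $g_1=0$ and $g_2=0$, with $g_2$ serving to single out the genuine common root and separate it from the $x=2\pi$ zero of $g_1$ coming from the factor $\sin x$. By Lemma \ref{xstar} this dichotomizes exactly as in the statement: if $1-4\pi^2a\ge0$ the first crossing is the $C=0$ mode at $x=2\pi$, giving the threshold $\tfrac{(c_{13}k)^2}{c_{23}}-4\pi^2c_{12}/L^2$ (which then automatically dominates $-\pi^2c_{13}/L^2$), whereas if $1-4\pi^2a<0$ the smaller common zero $x_*\in(\pi,2\pi)$ gives the larger value $\tfrac{(c_{13}k)^2}{c_{23}}-x_*^2c_{12}/L^2$, and $f^{crit}$ is the maximum of this and the $b_2$--threshold.

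Finally I would close the argument on both sides. For $f>f^{crit}$, positive definiteness of each block upgrades to the coercivity $D^2E_0^f(I)\ge c\|\cdot\|_{W^{1,2}}^2$ (using Poincaré to control $b_3$, hence $b_1$, and $b_2$), so Theorem \ref{mainlocal} yields that $R\equiv I$ is a strict $L^2$-local minimizer; for $f<f^{crit}$ the lowest eigenfunction of the failing block is a test function in $\mathcal T$ with $D^2E_0^f(I)<0$, so Remark \ref{necessarycondition} denies local minimality. I expect the delicate points to be the careful sign and boundary bookkeeping in the elimination of $b_1$ and, above all, the verification through Lemma \ref{xstar} that $x_*$ (respectively $2\pi$) really marks the first crossing as $f$ decreases, which is precisely what fixes $f^{crit}$ rather than some later zero of $g_1$.
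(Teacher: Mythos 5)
Your proposal is correct in substance and shares the paper's skeleton: both reduce the theorem, via Theorem \ref{mainlocal} (sufficiency) and Remark \ref{necessarycondition} (necessity), to positivity of the same second-variation form $\int_0^L\langle \mathbf{C}\omega^{\prime},\omega^{\prime}\rangle-c_{13}k(\omega_1\omega_3^{\prime}-\omega_1^{\prime}\omega_3)+f(\omega_2^2+\omega_3^2)\,\mathrm{d}x_1$, both split off the $\omega_2$-block (sharp Poincar\'e, threshold $-\pi^2c_{13}/L^2$), and both obtain $f^{crit}$ as the maximum of two block thresholds. The genuine divergence is in the coupled $(\omega_1,\omega_3)$-block. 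The paper keeps both components, characterizes the critical force through zero modes of the coupled Euler--Lagrange system, solves it by the exponential ansatz \eqref{lambdacond}, and reads off from the vanishing determinant either $|\lambda|L\in2\pi\mathbb{N}$ or the complex equation \eqref{false}, whose real and imaginary parts are precisely the system \eqref{systemg}, i.e.\ $g_1=g_2=0$, so Lemma \ref{xstar} applies verbatim. You instead eliminate $\omega_1$: pointwise minimization under weak clamping (which gives $f_1^{crit}=\frac{(c_{13}k)^2}{c_{23}}-\frac{\pi^2c_{12}}{L^2}$ with no ODE analysis at all, where the paper re-solves the system with the Neumann condition $\omega_1^{\prime}(L)=0$), and constrained minimization in the clamped--clamped case, which produces the nonlocal scalar functional $J$. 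Your claims here check out: the $3\times3$ determinant of the reduced problem is indeed, up to a nonvanishing factor, $g_1(x)=(x-ax^3)\sin x+2(\cos x-1)$, and on $(\pi,2\pi)$ the zeros of $g_1$ and of $g_2$ both reduce to $x-ax^3=2\tan(x/2)$, so they coincide with the common zeros and Lemma \ref{xstar} may legitimately be invoked; $g_2$ then only serves to discard the spurious zero of $g_1$ at $x=2\pi$. Your route is more elementary and explains structurally why $g_1$ and $g_2$ share their root. A further point in your favor: the rotational kernel you flag under weak clamping is real --- constant rotations about $e_1$ are admissible with identical energy, so literal strictness fails and the hypothesis of Theorem \ref{mainlocal} cannot hold on all of $\mathcal{T}$; the paper quietly quotients this symmetry out by normalizing $\omega_1(0)=0$, whereas you state the required ``modulo $SO(2)$'' reading explicitly.

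Two completions are still needed, the first of which you flag yourself. (a) To conclude that the first crossing is at $x_*$ (resp.\ $2\pi$) you must exclude zeros of the determinant in $(0,\pi]$, which Lemma \ref{xstar} does not address. The paper closes this by the a priori bound $f_1^{crit}\leq\frac{(c_{13}k)^2}{c_{23}}-\frac{\pi^2c_{12}}{L^2}$ (integration by parts plus sharp Poincar\'e on $\omega_3$); in your formulation it is a one-line fix: for $x\in(0,\pi)$ one has $x-ax^3<x<2\tan(x/2)$, hence $g_1<0$ there, and $g_1(\pi)=-4\neq 0$. (b) Your identification of the positivity threshold with the largest force admitting a zero mode tacitly assumes that when positive definiteness fails it fails through an \emph{attained} zero mode; this is exactly what the paper's compactness argument around \eqref{infsequence} establishes (weak compactness of a normalized minimizing sequence and exclusion of the zero weak limit). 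For your reduced scalar problems this is classical Sturm--Liouville material, and for the nonlocal functional $J$ the same argument goes through since the term $\bigl(\int_0^Lb_3\,\mathrm{d}x_1\bigr)^2$ is weakly continuous, but it should be said. Both are routine repairs; neither invalidates your approach.
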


\begin{proof}
	We start observing that, by Proposition \ref{variations} the second variation of $E^f_{0}$ at $I$ is given by the functional
	\begin{equation}\label{secondvarid}
	F^f(\omega)=\int_0^L\langle \mathbf{C}\omega^{\prime},\omega^{\prime}\rangle-c_{13}k(\omega_1\omega_3^{\prime}-\omega_1^{\prime}\omega_3)+f(\omega_2^2+\omega_3^2)\,\mathrm{d}x_1.
	\end{equation}
	Above, to reduce subscripts and referring to formula \eqref{secondvariation}, we have set $\omega=\omega_{B}$ and observed that $\omega'=\omega_{B'}$. \\
	
	We first discuss the case of Dirichlet boundary conditions $R(0)=R(L)=I$. In this case $\omega\in W^{1,2}_0((0,L),\R^{3})$. 
	Thanks to Theorem \ref{mainlocal} and Remark \ref{necessarycondition} we have that
	\begin{equation}\label{critical}
	f^{crit}=\inf\{f\in\R:\,\inf_{\omega\in W^{1,2}_{0}, \|\omega\|=1} F^{f}(\omega)> 0 \}.
	\end{equation}
	Notice that the quadratic structure of $F^{f}$ implies that, for $f>f^{crit}$ $F^{f}(\omega)\geq c\|\omega\|^{2}_{W_{0}^{1,2}}$, so that, by Theorem \ref{mainlocal}, the conclusion holds true. 
It now simply remains to characterize $f^{crit}$ as in the statement of the theorem. \\

	We begin by claiming that $f^{crit}$ can be characterized as the unique $f\in\R$ such that 
	\begin{equation*}
	\inf_{\omega\in W^{1,2}_0, \|\omega\|=1}F^f(\omega)=\min_{\omega\in W^{1,2}_{0}, \|\omega\|=1} F^{f}(\omega)= 0.
	\end{equation*}
	We first show that there exists $\omega\neq 0$ such that $F^{f^{crit}}(\omega)=0$. Indeed, (\ref{critical}) implies that $F^{f^{crit}}\geq 0$. Moreover, if $F^{f^{crit}}(\omega)\geq c\|\omega\|_{W^{1,2}}$ for some $c>0$, then one can use the Poincar\'e inequality to show that $f^{crit}$ violates (\ref{critical}). Hence there exists a sequence $\omega_n\in W^{1,2}_0((0,L),\R^3)$ such that $\|\omega_n\|_{W^{1,2}}=1$ and 
	\begin{equation}\label{infsequence}
	\lim_n\int_0^L\langle \mathbf{C}\omega_n^{\prime},\omega_n^{\prime}\rangle-c_{13}k(\omega_{1,n}\omega_{3,n}^{\prime}-\omega_{1,n}^{\prime}\omega_{3,n})+f^{crit}(\omega_{2,n}^2+\omega_{3,n}^2)\,\mathrm{d}x_1=0.
	\end{equation}
	Up to subsequences, we have that $\omega_n\rightharpoonup\omega$ in $W^{1,2}((0,L))$ and uniformly on $[0,L]$. By lower semicontinuity, the claim follows if $\omega\neq 0$. Suppose $\omega=0$, then passing to the limit in (\ref{infsequence}), we deduce from uniform and weak convergence that also $\omega_n^{\prime}\to 0$ strongly in $L^2((0,L))$ which contradicts the fact that $\|\omega_n\|_{W^{1,2}}=1$. To prove uniqueness of such $f$, take a minimizer $\omega$ with $\|\omega\|=1$. Then it follows that for any $f<f^{crit}$ we have $F^{f}(\w)\leq (f-f^{crit})\int_0^L\omega_2^2+\omega_3^2\,\mathrm{d}x_1<0$ (otherwise $F^{f^{crit}}(\w)>0$). The same argument shows that there exists no $f>f^{crit}$ such that $F^f(\omega)=0$ for some $\omega\neq 0$.

	It is now convenient to decouple the functional $F^{f}(\omega)=F^f_1(\omega_{1},\omega_{3})+F^f_2(\omega_{2})$, where 
	\begin{align*}
	F^{f}_1&(\omega_{1},\omega_{3})=\int_0^Lc_{23}(\omega_1^{\prime})^2+c_{12}(\omega_3^{\prime})^2-c_{13}k(\omega_1\omega_3^{\prime}-\omega_1^{\prime}\omega_3)+f\omega_3^2\,\mathrm{d}x_1,
	\\
	F^{f}_2 &(\omega_{2})=\int_0^Lc_{13}(\omega_2^{\prime})^2+f\omega_2^2\,\mathrm{d}x_1.
	\end{align*}
	By similar arguments used for $F^f(\omega)$ there exist unique $f_1^{crit},f_2^{crit}\in\R$ characterized as those $f_1,f_2\in\R$ such that 
	\begin{equation}\label{f1f2crit}
	\begin{split}
	\inf_{\|(\omega_{1},\omega_{3})\|=1}&F^{f_1}(\omega_1,\omega_3)=\min_{
		\|(\omega_{1},\omega_{3})\|=1} F_{1}^{f_1}(\omega_{1},\omega_{3})=0 , \\ 
	\inf_{\|\omega_2\|=1}&F_2^{f_2}(\omega_2)=\min_{
		\|\omega_{2}\|=1} F_{2}^{f_2}(\omega_{2})=0,
	\end{split}	
	\end{equation}
	where $\|\cdot\|$ denotes $\|\cdot\|_{W_{0}^{1,2}}$.
	We now claim that $f^{crit}=\max\{f_{1}^{crit},f_{2}^{crit}\}$. Indeed, in the case $f_1^{crit}\leq f_2^{crit}$ we take any minimizer $\omega_2\neq 0$ for $F^{f_2^{crit}}$ and define $\omega=(0,\omega_2,0)$. Then $\|\omega\|=1$ and $F^{f_2^{crit}}(\omega)=0$. Moreover, as $F_1^{f_2^{crit}}\geq 0$ it follows that $F^{f_2^{crit}}\geq 0$, thus by uniqueness we get $f_2^{crit}=f^{crit}$. A symmetric argument proves the claim in the remaining case.
	
	Applying the Poincar\'{e} inequality to $\omega_2$, it is immediate to see that $f_{2}^{crit}=-\frac{\pi^2c_{13}}{L^2}$. Hence it remains to calculate $f_{1}^{crit}$. Using integration by parts and the Poincar\'e inequality with sharp constant for $\omega_3$ we obtain
	\begin{equation*}
	F^f_1(\omega)\geq\int_0^Lc_{23}(\omega_1^{\prime})^2+\left(\frac{\pi^2}{L^2}c_{12}+f\right)\omega_3^2+2c_{13}k\omega^{\prime}_1\omega_3\,\mathrm{d}x_1,
	\end{equation*}
	so that if $\left(\frac{\pi^2}{L^2}c_{12}+f\right)c_{23}>(c_{13}k)^2$, then $F^f_1(\omega_{1},\omega_{3})> 0$ for all $\omega_{1},\omega_{3}$ with $\|(\omega_{1},\omega_{3})\|_{W_{0}^{1,2}}=1$. Thus $f_{1}^{crit}\leq \frac{(c_{13}k)^2}{c_{23}}-\frac{\pi^2c_{12}}{L^2}$.

	Now we use the Euler-Lagrange equation to calculate $f_{1}^{crit}$. Note that if $\omega\neq 0$ is such that $F^{f_{1}^{crit}}_1(\omega)=0$, then it is a global minimizer. Hence it solves the following differential equations:
	\begin{equation*}
	\begin{cases}
	c_{23}\omega_1^{\prime\prime}+c_{13}k\omega_3^{\prime}=0,\\
	c_{12}\omega_3^{\prime\prime}-c_{13}k\omega_1^{\prime}-f^{crit}_{1}\omega_3=0.
	\end{cases}
	\end{equation*} 
	The general solutions of the above system are
	\begin{align*}
	\omega_1(x_1)&=a_1+a_2x_1+a_3e^{\lambda x_1}+a_4e^{-\lambda x_1},\\
	\omega_3(x_1)&=b_1+b_2x_1+b_3e^{\lambda x_1}+b_4e^{-\lambda x_1},
	\end{align*}
	where 
	
	\begin{equation}\label{lambdacond}
	\lambda=\sqrt{\frac{f_{1}^{crit}}{c_{12}}-\frac{(c_{13}k)^2}{c_{12}c_{23}}}\quad \mbox{or equivalently } f^{crit}_{1}=c_{12}\lambda^2+\frac{(c_{13}k)^2}{c_{23}}.
	\end{equation}
	
	Note that by the a priori upper bound $f_{1}^{crit}\leq \frac{(c_{13}k)^2}{c_{23}}-\frac{\pi^2c_{12}}{L^2}$ we have $\lambda\in i\R_+$ and $|\lambda|L\geq \pi$. Plugging this ansatz into the equation and taking into account boundary conditions we get a linear system which admits non-trivial solutions under the following zero determinant condition:
	%	\begin{equation}\label{linearsystem}
	%	\begin{pmatrix}
	%	0 &0 &c_{23}\lambda &0 &0 &0 &c_{13}k &0\\
	%	0 &0 &0 &c_{23}\lambda &0 &0 &0 &-c_{13}k\\
	%	0 &0 &0 &0 &0 &1 &0 &0\\
	%	0 &c_{13}k &0 &0 &f_{*,1} &0 &0 &0\\
	%	1 &0 &1 &1 &0 &0 &0 &0\\
	%	1 &L &e^{\lambda L} &e^{-\lambda L} &0 &0 &0 &0\\
	%	0 &0 &0 &0 &1 &0 &1 &1\\
	%	0 &0 &0 &0 &1 &L &e^{\lambda L} &e^{-\lambda L}
	%	\end{pmatrix}
	%	\begin{pmatrix} a_1\\a_2\\a_3\\a_4\\b_1\\b_2\\b_3\\b_4\end{pmatrix}=0.
	%	\end{equation}
	
	\begin{equation*}
	\frac{c_{23}\lambda(e^{\lambda L}-1)}{e^{\lambda L}}\left(-2(c_{13}k)^2(e^{\lambda L}-1)+c_{23}\lambda Lf^{crit}_{1}(e^{\lambda L}+1)\right)=0.
	\end{equation*}
	Using \eqref{lambdacond} in the previous equation we get that either $|\lambda|L\in 2\pi{\mathbb N}$ or
	\begin{equation}\label{false}
	\lambda L\left(\frac{c_{12}c_{23}\lambda^2}{(c_{13}k)^2}+1\right)(e^{\lambda L}+1)-2(e^{\lambda L}-1)=0.
	\end{equation}
	Checking the real and imaginary part of (\ref{false}) we infer that
	\begin{align}\label{systemg}
	&|\lambda|L\left(\frac{c_{12}c_{23}\lambda^2}{(c_{13}k)^2}+1\right)\sin(|\lambda|L)+2(\cos(|\lambda|L)-1)=0,\\
	&|\lambda|L\left(\frac{c_{12}c_{23}\lambda^2}{(c_{13}k)^2}+1\right)(\cos(|\lambda|L)+1)-2\sin(|\lambda|L)=0.\nonumber
	\end{align}
	Using Lemma \ref{xstar} we conclude that 
	\begin{equation*}
	f_{1}^{crit}=\begin{cases}
	\frac{(c_{13}k)^2}{c_{23}}-\frac{4\pi^2c_{12}}{L^2} &\mbox{if $1-4\pi^2\frac{c_{12}c_{23}}{(c_{13}kL)^2}\geq 0$,}\\
	\frac{(c_{13}k)^2}{c_{23}}-\frac{x_*^2c_{12}}{L^2} &\mbox{if $1-4\pi^2\frac{c_{12}c_{23}}{(c_{13}kL)^2}<0$.}
	\end{cases}
	\end{equation*}
	where the first line follows from \eqref{lambdacond} by taking the minimal possible value for $|\lambda|\in {2\pi\over L} {\mathbb N}$ in the case when system \eqref{systemg} has no solutions. By comparing the values of $f_{1}^{crit}$ and $f_{2}^{crit}$ we get the claim. 
	
	We now discuss the case of weak-clamped boundary conditions. In this case the test functions $\omega\in W^{1,2}((0,L),\R^{3})$ are such that $\omega_{2}(0)=\omega_{3}(0)=\omega_{2}(L)=\omega_{3}(L)=0$. Since the second variation is not sensitive to translations in the variable $\omega_1$, we may further suppose that $\omega_{1}(0)=0$. As a result we can follow the same lines as in the previous case, the only difference being the Neumann boundary condition $\omega_{1}'(L)=0$.
	Again we split the functional in two parts and we define the two critical forces $f_1^{crit},f^{crit}_2$ as in \eqref{f1f2crit} for the new space of test functions. The value for $f^{crit}_2$ is the same as in the previous case. The zero determinant condition of the new linear system for critical forces gives $\lambda L\in\pi i\mathbb{Z}$. Since $\lambda=0$ gives only the trivial solution, the critical force $f_{1}^{crit}$ corresponds to $\lambda L=\pi i$ and, by \eqref{lambdacond} (observe that this condition was derived independently of the boundary data) it reads  $f^{crit}_1=\frac{(c_{13}k)^2}{c_{23}}-\frac{\pi^2c_{12}}{L^2}$.
\end{proof}
%\begin{remark}\label{unstabledirection}
%	In the case (i) and $(c_{13}kL)^2\geq 4\pi^2c_{12}c_{23}$, setting
%	\begin{equation*}
%	\omega_*(x_1)=\begin{pmatrix}
%	Lc_{13}k(1-\cos(\frac{2\pi}{L}x_1))\\ 0 \\ -2\pi c_{23}\sin(\frac{2\pi}{L}x_1)) 
%	\end{pmatrix},
%	\end{equation*}
%	it holds $D^2E_0^{f^*}(I)[\omega_*,\omega_*]=0$. Of course, there is also an explicit formula for $\omega_*$ realizing $D^2E_0^{f^*}(I)[\omega_*,\omega_*]=0$ in the case where $(c_{13}kL)^2<4\pi^2c_{12}c_{23}$. However, the expression is quite long so we omit it.
%	\\
%	\end{remark}
Using the above stability result, we can conclude that if the force is large enough, then the straight configuration becomes even the unique global minimizer. 

\begin{theorem}
	Under the boundary conditions $R(0)e_{1}=R(L)e_{1}=e_{1}$ or $R(0)=R(L)=I$, there exists $f_{g}$ such that for all $f>f_g$ the straight configuration $R=I$ is the unique global minimizer of $E_0^{f}$.
\end{theorem}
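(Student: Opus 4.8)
The plan is to show that for large $f$ the load term in \eqref{example} penalises any deviation of $Re_1$ from $e_1$ so strongly that the straight configuration cannot be beaten. Since at $R=I$ one has $A=0$ and $\langle e_1,Ie_1\rangle=1$, a direct evaluation gives $E_0^f(I)=\tfrac12 c_{13}k^2L-fL$. Using $|Re_1-e_1|^2=2(1-\langle e_1,Re_1\rangle)$ I would write the gap as
\begin{equation*}
E_0^f(R)-E_0^f(I)=B(R)+\frac{f}{2}\int_0^L|R(x_1)e_1-e_1|^2\,\mathrm{d}x_1,
\end{equation*}
where $B(R):=\tfrac12\int_0^L\bigl(c_{12}a_{12}^2+c_{13}(a_{13}-k)^2+c_{23}a_{23}^2-c_{13}k^2\bigr)\,\mathrm{d}x_1$ does not depend on $f$ and, estimating the only indefinite term $-2c_{13}k\,a_{13}$ by Young's inequality, is coercive: $B(R)\ge c_*\|R'\|_{L^2}^2-c_{13}k^2L$ for some $c_*>0$. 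The load term is nonnegative and vanishes precisely when $Re_1\equiv e_1$.

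I would then argue by contradiction: if no $f_g$ existed there would be $f_n\to+\infty$ and admissible $R_n\neq I$ with $B(R_n)+\tfrac{f_n}{2}\|R_ne_1-e_1\|_{L^2}^2\le0$. Coercivity of $B$ forces $\|R_n'\|_{L^2}$ bounded and $\|R_ne_1-e_1\|_{L^2}\to0$, so (as in Proposition \ref{compact}) up to subsequences $R_n\rightharpoonup \bar R$ in $W^{1,2}$ and uniformly, with $\bar R\in W^{1,2}((0,L),SO(3))$ satisfying the boundary conditions and $\bar Re_1\equiv e_1$. From $A_\infty e_1=\bar R^T(\bar Re_1)'=0$ one gets $(a_{12})_\infty=(a_{13})_\infty=0$, hence $B(\bar R)=\tfrac12\int c_{23}(a_{23})_\infty^2\ge0$; on the other hand the convex form $B$ is weakly lower semicontinuous (its linear part passes to the limit under the uniform-times-weak convergence of $A_n=R_n^TR_n'$), so $B(\bar R)\le\liminf_n B(R_n)\le0$. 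Thus $B(\bar R)=0$, $(a_{23})_\infty=0$, and $\bar R$ is a constant rotation about $e_1$; with the boundary datum $R(0)=I$ this gives $\bar R=I$. Finally $B(R_n)\to B(\bar R)$ together with the coercivity of the quadratic part of $B$ upgrades the convergence to $R_n\to\bar R$ strongly in $W^{1,2}$.

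The decisive and most delicate step is to contradict $R_n\neq\bar R$ by a second-order analysis that keeps track of the dependence on $f_n$. Writing $R_n=\bar R\exp(\Theta_n)$ with $\Theta_n\in W^{1,2}((0,L),\mathbb{M}^{3\times 3}_{skew})$, $\Theta_n\to0$ in $W^{1,2}$ (hence $\|\Theta_n\|_\infty\to0$), and setting $\psi_n=\omega_{\Theta_n}$ (which lies in the corresponding test space $\mathcal T$), I would expand $E_0^{f_n}(R_n)-E_0^{f_n}(I)$ at the critical point $\bar R$ as in the proof of Theorem \ref{mainlocal}, using the twice continuously Fr\'echet-differentiable extension of the energy. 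The quadratic part is $\tfrac12F^{f_n}(\psi_n)$ with $F^{f}$ as in \eqref{secondvarid}; the $f$-independent (bending) remainder is $o(\|\psi_n\|_{W^{1,2}}^2)$, while Rodrigues' formula gives the load term exactly as $f_n\int_0^L g(|\psi_n|)(\psi_{n,2}^2+\psi_{n,3}^2)\,\mathrm{d}x_1$ with $g(r)=(1-\cos r)/r^2\in(0,\tfrac12]$ and $g(r)=\tfrac12+O(r^2)$. The only term carrying the large factor $f_n$ is therefore $f_n\int(g(|\psi_n|)-\tfrac12)(\psi_{n,2}^2+\psi_{n,3}^2)$, which is bounded by $Cf_n\|\Theta_n\|_\infty^2\int(\psi_{n,2}^2+\psi_{n,3}^2)$ and, since $\|\Theta_n\|_\infty\to0$, is absorbed into half of the quadratic load term $\tfrac{f_n}{2}\int(\psi_{n,2}^2+\psi_{n,3}^2)$. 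What remains is bounded below by a positive multiple of $F^{f_n/2}(\psi_n)-C\|\psi_n\|_{W^{1,2}}^3$. The crucial structural fact is that coercivity of $F^{f}$ is uniform for large $f$: by \eqref{secondvarid} one has $F^{f}\ge F^{f_0}$ whenever $f\ge f_0$, and $F^{f_0}(\psi)\ge c_0\|\psi\|_{W^{1,2}}^2$ as soon as $f_0>f^{crit}$ (Theorem \ref{idstable}). Hence for $n$ large $0\ge E_0^{f_n}(R_n)-E_0^{f_n}(I)\ge c\|\psi_n\|_{W^{1,2}}^2\bigl(1-C'\|\psi_n\|_{W^{1,2}}\bigr)$, which is strictly positive unless $\psi_n=0$, i.e. $R_n=\bar R$, contradicting $R_n\neq I$.

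In the clamped-clamped case $\bar R=I$ and the argument directly yields that $R=I$ is the unique global minimizer for $f>f_g$. The main obstacle, as highlighted above, is precisely the uniformity in $f$ of the remainder and of the coercivity of the second variation; both are overcome by the exact Rodrigues expression for the load and by the monotonicity $F^f\ge F^{f_0}$. Finally, in the weak-clamping case the limit $\bar R$ is only a constant rotation about $e_1$; since $E_0^f$ is invariant under the left action $R\mapsto QR$ by such rotations (which leaves $A=R^TR'$ and $\langle e_1,Re_1\rangle$ unchanged and preserves \eqref{rigid}), applying this gauge reduces the last step to the case $\bar R=I$ and shows uniqueness of the global minimizer up to this symmetry, every representative describing the same straight rod.
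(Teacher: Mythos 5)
Your proof is correct and follows the same global skeleton as the paper's own argument --- contradiction with $f_n\to+\infty$, compactness forcing $R_n$ to approach the straight configuration, then a stability property of $I$ that is uniform in large $f$ --- but you implement the decisive uniformity step by a genuinely different and much heavier route. The paper simply observes that $\langle (R-I)e_1,e_1\rangle\le 0$ makes the gap $f\mapsto E_0^f(R)-E_0^f(I)$ nondecreasing in $f$, so the $L^2$-neighborhood of strict local minimality furnished by Theorem \ref{idstable} at the single force $f^{crit}+1$ serves for every larger $f$; once $R_n\to I$ in $L^2$ (obtained from weak $W^{1,2}$-compactness and lower semicontinuity, exactly as you do), the contradiction is immediate. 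Note that your own decomposition $E_0^f(R)-E_0^f(I)=B(R)+\tfrac f2\|Re_1-e_1\|_{L^2}^2$ already exhibits this monotonicity, so your ``most delicate step'' --- the second-order expansion with the exact Rodrigues representation of the load, the absorption of the error $f_n\int\bigl(g(|\psi_n|)-\tfrac12\bigr)(\psi_{n,2}^2+\psi_{n,3}^2)$, and the monotonicity $F^f\ge F^{f_0}$ of \eqref{secondvarid} --- is sound but redundant: it re-proves, with explicit $f$-dependence, the $L^2$-local minimality that Theorem \ref{idstable} (built on Proposition \ref{weak-strong} and Theorem \ref{mainlocal}) already supplies as a black box. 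What your route buys is a self-contained quantitative lower bound and strong $W^{1,2}$-convergence of $R_n$; what the paper's route buys is a three-line conclusion. One point where you are more careful than the paper: in the weak-clamping case $A_\infty=0$ only forces the limit to be a constant rotation about $e_1$, and since $E_0^f$ and the constraint \eqref{rigid} are invariant under $R\mapsto QR$ with $Qe_1=e_1$, literal uniqueness of $I$ fails there; your conclusion ``unique up to this symmetry'' is the accurate statement, whereas the paper's proof asserts $R_\infty=I$ at that point, so its uniqueness claim should be read modulo this gauge (it is literal only in the clamped-clamped case).
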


\begin{proof} We prove the statement in the case $R(0)e_{1}=R(L)e_{1}=e_{1}$ which in particular implies the result in the case $R(0)=R(L)=I$.
	We argue by contradiction. Assume that there exists a sequence $f_n\to +\infty$ and an associated sequence $R_n$ such that $E_0^{f_n}(R_n)\leq E_0^{f_n}(I)$. We know from Theorem \ref{idstable} that for $f>f^{crit}$, the straight configuration is a strict local minimum in $L^2((0,L),SO(3))$. As an elementary first step, we note that the $L^{2}$-neighborhood in which $R(x_{1})=I$ is stable can be taken independent of the forces for all $f\geq f^{crit}+1$. This simply follows since increasing the force adds a positive contribution to the energy difference. Hence it is enough to show that $R_n\to I$ in $L^2((0,L))$. Since $\langle(R-I)e_1,e_1\rangle\leq 0$ for every $R\in SO(3)$, we deduce
	\begin{equation}\label{energyinq}
	0\leq E_0^0(R_n)\leq E_0^0(I)+f_n\int_0^L\langle (R_n(x_1)-I)e_1,e_1\rangle\,\mathrm{d}x_1\leq E^0_0(I),
	\end{equation} 
	so that $E_0^0(R_n)$ is bounded. Thus, up to subsequences, we can assume that $R_n\rightharpoonup R_{\infty}$ in $W^{1,2}((0,L))$. Moreover, since $f_n\to +\infty$ and the integral in (\ref{energyinq}) is sign-definite, it follows that $R_ne_1\to e_1$ in $L^1((0,L))$ and thus uniformly by the Sobolev embedding. Passing to the weak limit of $A_n:=R_n^TR_n^{\prime}$, it holds that $A_n\rightharpoonup A_{\infty}=R_{\infty}^TR^{\prime}_{\infty}$ in $L^2((0,L))$. As $R_{\infty}^{\prime}e_1=0$, the matrix-valued function $A_{\infty}$ takes the form
	\begin{equation*}
	A_{\infty}(x_1)=a_{23}(x_1)(e_2\otimes e_3-e_3\otimes e_2).
	\end{equation*}
	Passing to the limit in (\ref{energyinq}) and using weak lower semicontinuity yields $E_0^0(R_{\infty})\leq E_0^0(I)$, which by a direct comparison is only possible for $a_{23}=0$, hence $A_{\infty}=0$ and $R_{\infty}=I$.
\end{proof}

\begin{remark}\label{locnotglob}
	In general there is a gap between the critical forces where the straight configuration becomes a local and a global minimizer. For example, if $kL=2\pi$ and $c_{13}^2<c_{12}c_{23}$, then the straight configuration is a strict local minimizer for $f=0$ according to Theorem \ref{idstable}. Since for such value of the parameters the curved beam defined in (\ref{curvemin}) is also an admissible configuration for the Dirichlet boundary data $R(0)=R(L)=I$, it turns out that this is the unique global minimizer by Remark \ref{curvmin}. Using formula \eqref{quadraticpart} together with \eqref{torsionalrig}, it can be seen that a limit model satisfying the condition $c_{13}^2<c_{12}c_{23}$ can be obtained by taking a rectangular cross section with $w_z<< w_y$.
\end{remark}

We close this section with a bifurcation result. Our aim is to prove that at the critical force another branch of solutions of (\ref{ODE}) emerges. We make use of the Crandall-Rabinowitz-Theorem \cite{CrRa} which reads as follows:

\begin{theorem}[Crandall-Rabinowitz]\label{CR}
Let $X,Y$ be Banach spaces, $V$ a neighbourhood of $(0,\lambda_0)$ in $X\times\R$ and $G:V\to Y$ have the properties

\begin{itemize}
	\item[(i)] $G(t,0)=0$,
	\item[(ii)] the partial derivatives $G_x,G_t,G_{tx}$ exist and are continuous,
	\item[(iii)] $G_x(\lambda_0,0)$ is a Fredholm operator with zero index and $N(G_x(\lambda_0,0))=\text{span}\{v\}$,
	\item[(iv)] $G_{tx}(\lambda_0,0)v\notin R(G_x(\lambda_0,0))$.
\end{itemize}
If $Z$ is any complement of $N(G_x(\lambda_0,0))$ in $X$, then there is a neighbourhood $U$ of $(\lambda_0,0)$ in $\R\times X$, an interval $(-a,a)$ and continuous functions $\varphi:(-a,a)\to\R$ and $\psi:(-a,a)\to Z$ such that $\varphi(0)=0,\,\psi(0)=0$ and
\begin{equation*}
G^{-1}(0)\cap U=\{(\lambda_0+\varphi(t),tv+t\psi(t)):\;|t|<a\}\cup \{(t,0):\;(t,0)\in U\}.
\end{equation*}
If $G\in C^n(V)$, then $\varphi,\psi\in C^{n-1}((-a,a))$.
\end{theorem}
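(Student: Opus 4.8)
The plan is to prove this classical bifurcation-from-a-simple-eigenvalue result by a Lyapunov--Schmidt reduction followed by two applications of the implicit function theorem in Banach spaces. Write $L:=G_x(\lambda_0,0)$; by (iii) $L$ is Fredholm of index zero with $N(L)=\mathrm{span}\{v\}$, so $R(L)$ is closed of codimension one. I would first fix a topological complement $Z$ of $N(L)$ in $X$, so that every $x$ near $0$ decomposes uniquely as $x=sv+z$ with $s\in\R$ (this coefficient $s$ will be the curve parameter, denoted $t$ in the statement) and $z\in Z$, together with a one-dimensional complement $W$ of $R(L)$ in $Y$ and the associated continuous projections $P:Y\to R(L)$ and $Q:=\mathrm{Id}-P:Y\to W\cong\R$. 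The equation $G(\lambda,sv+z)=0$ is then equivalent to the pair $PG(\lambda,sv+z)=0$ (the range equation) and $QG(\lambda,sv+z)=0$ (the scalar bifurcation equation).

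For the range equation, set $\Phi(\lambda,s,z):=PG(\lambda,sv+z)$. Then $\Phi(\lambda_0,0,0)=0$ and $\partial_z\Phi(\lambda_0,0,0)=PL|_Z=L|_Z$, which is an isomorphism of $Z$ onto $R(L)$ since $L$ is injective on $Z$ with range $R(L)$. By the implicit function theorem, using the continuity in (ii), I obtain $z=\zeta(\lambda,s)$, as regular as $G$ near $(\lambda_0,0)$, solving the range equation with $\zeta(\lambda_0,0)=0$. Because by (i) the trivial solution already solves the range equation when $s=0$, uniqueness forces $\zeta(\lambda,0)=0$ for all $\lambda$ near $\lambda_0$. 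Differentiating $PG(\lambda,sv+\zeta)=0$ in $s$ at $(\lambda_0,0)$ and using $Lv=0$ gives $L\,\partial_s\zeta(\lambda_0,0)=0$, hence $\partial_s\zeta(\lambda_0,0)=0$ by injectivity of $L|_Z$.

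It remains to analyse $g(\lambda,s):=QG(\lambda,sv+\zeta(\lambda,s))\in W\cong\R$, and here lies the main obstacle: the trivial branch makes $g$ degenerate, since $g(\lambda,0)=QG(\lambda,0)=0$ and also $\partial_s g(\lambda_0,0)=QL(v+\partial_s\zeta(\lambda_0,0))=0$ because $QL=0$. The device is to factor out the trivial solution: by the fundamental theorem of calculus $g(\lambda,s)=s\,h(\lambda,s)$ with $h(\lambda,s):=\int_0^1\partial_s g(\lambda,\theta s)\,\mathrm{d}\theta$, so $h$ inherits the regularity of $g$ and $h(\lambda_0,0)=\partial_s g(\lambda_0,0)=0$. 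The transversality hypothesis (iv) is exactly what makes the reduced equation nondegenerate after this division: from $\partial_s g(\lambda,0)=QG_x(\lambda,0)(v+\partial_s\zeta(\lambda,0))$, differentiating in $\lambda$ at $\lambda_0$ kills the $L$-term (again $QL=0$) and, using $\partial_s\zeta(\lambda_0,0)=0$, leaves $\partial_\lambda h(\lambda_0,0)=\partial_\lambda\partial_s g(\lambda_0,0)=QG_{tx}(\lambda_0,0)v$, which is nonzero since $G_{tx}(\lambda_0,0)v\notin R(L)=N(Q)$. Applying the implicit function theorem to $h(\lambda,s)=0$ at $(\lambda_0,0)$ then yields $\lambda=\lambda_0+\varphi(s)$ with $\varphi(0)=0$, and the bifurcating branch is $x=sv+\zeta(\lambda_0+\varphi(s),s)$. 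Setting $\psi(s):=s^{-1}\zeta(\lambda_0+\varphi(s),s)$ and using $\zeta(\lambda,0)=0$ together with $\partial_s\zeta(\lambda_0,0)=0$ shows that $\psi$ extends continuously with $\psi(0)=0$, giving the stated parametrisation $(\lambda_0+\varphi(s),sv+s\psi(s))$ next to the trivial branch $(\lambda,0)$. Finally, tracking the regularity of $\zeta$ and $h$ through the two implicit-function applications shows that $\varphi,\psi\in C^{n-1}$ whenever $G\in C^n$.
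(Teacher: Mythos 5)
First, a remark on the comparison itself: the paper contains no proof of this statement. Theorem \ref{CR} is quoted verbatim from \cite{CrRa} and used as a black box in the proof of Theorem \ref{bifurcation}, so the only meaningful baseline is the original Crandall--Rabinowitz argument. Your route is the standard two-step Lyapunov--Schmidt proof: solve the range equation for $z=\zeta(\lambda,s)$, factor the scalar bifurcation equation as $g(\lambda,s)=s\,h(\lambda,s)$, and apply the implicit function theorem to $h$. This is genuinely different from the original proof, which applies the implicit function theorem \emph{once}, to the blown-up map $f(t,\lambda,z):=t^{-1}G(\lambda,t(v+z))$ for $t\neq 0$, extended by $f(0,\lambda,z):=G_x(\lambda,0)(v+z)$; there the joint partial derivative in $(\lambda,z)$ at $(0,\lambda_0,0)$ is $(\mu,w)\mapsto \mu\,G_{tx}(\lambda_0,0)v+Lw$, an isomorphism of $\R\times Z$ onto $Y$ by (iii)--(iv), and only $G_x$, $G_t$, $G_{tx}$ are ever differentiated or used.

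The reason the original proof takes that detour is precisely where your write-up has a gap. Hypothesis (ii) grants continuity of $G_x$, $G_t$, $G_{tx}$ only, \emph{not} existence of $G_{xx}$. To apply the implicit function theorem to $h(\lambda,s)=0$ you need $\partial_\lambda h$ to exist near $(\lambda_0,0)$ and to be continuous at least at that point; but the chain-rule expression for $\partial_\lambda\partial_s g$ contains the terms $Q\,G_{xx}(\lambda,sv+\zeta)[\partial_\lambda\zeta,\,v+\partial_s\zeta]$ and $Q\,G_x(\lambda,sv+\zeta)\,\partial_\lambda\partial_s\zeta$, and neither $G_{xx}$ nor the mixed second derivative of $\zeta$ is available under (ii). Your computation of $\partial_\lambda h(\lambda_0,0)$ differentiates $h(\lambda,0)=\partial_s g(\lambda,0)$ only along the slice $s=0$ (legitimate there, since $x=0$ is frozen and only $G_{tx}(\cdot,0)$ enters), which is not what the implicit function theorem requires. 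The gap is repairable without strengthening the hypotheses: for $s\neq 0$ one has $\partial_\lambda h=\partial_\lambda g(\lambda,s)/s$ with $\partial_\lambda g=Q G_t(\lambda,sv+\zeta)+Q G_x(\lambda,sv+\zeta)\partial_\lambda\zeta$; writing $G_t(\lambda,x)=\int_0^1 G_{tx}(\lambda,\theta x)\,x\,\mathrm{d}\theta$ (using $G_t(\lambda,0)=0$ and symmetry of the mixed partial, valid because $G_{tx}$ is continuous) and $\partial_\lambda\zeta=-\bigl[P G_x|_Z\bigr]^{-1}P G_t$, one checks that $\partial_\lambda g(\lambda,s)/s\to Q\,G_{tx}(\lambda_0,0)v\neq 0$ as $(\lambda,s)\to(\lambda_0,0)$, the dangerous term being harmless since it carries the factor $Q G_x(\lambda,sv+\zeta)\to QL=0$; the contraction-mapping form of the implicit function theorem (existence of $\partial_\lambda h$ nearby, continuity only at the point) then applies. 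Either insert such an argument, or switch to the one-shot blow-up of \cite{CrRa}; as written, the final step is justified only under the stronger assumption $G\in C^2$.
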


We will use this theorem in order to show that for a generic value of $f^{crit}$ the couple $(f^{crit},I)$ is a bifurcation point.  
We state the result in the general case and prove it for weak-clamping boundary conditions. The proof in the clamped-clamped case, which requires only slight modifications, is left to the reader.

The restriction on the critical forces in the statement below excludes the case of a two-dimensional null space. Note that for non-negative critical forces such a restriction is always satisfied.

%However we cannot rule out the case $\frac{(c_{13}k)^2}{c_{23}}+\frac{\pi^2c_{12}}{L^2}\neq\frac{x_*^2c_{13}}{L^2}$, where the corresponding null space would be two-dimensional. To simplify computations we restrict the analysis to non-negative critical forces.

\begin{theorem}\label{bifurcation}
Let $E_0^f$ be the functional defined in (\ref{example}) and let $f^{crit}$ be as in Theorem \ref{idstable}. Assume that 
\begin{itemize}
\item[{(i)}] if $R(0)=R(L)=I$ either $\frac{(c_{13}kL)^2}{4\pi^2c_{12}c_{23}}> 1$ or\footnote{We convene that $x_{*}=2\pi$ if $\frac{(c_{13}kL)^2}{4\pi^2c_{12}c_{23}}=1$} $\frac{(c_{13}k)^2}{c_{23}}-\frac{x_*^2c_{12}}{L^2}\neq-\frac{\pi^2c_{13}}{L^2}$, \\
\item[{(ii)}] if $R(0)e_{1}=R(L)e_{1}=e_{1}$ it holds $\frac{(c_{13}k)^2}{c_{23}}-\frac{\pi^2c_{12}}{L^2}\neq-\frac{\pi^2 c_{13}}{L^2}$.
\end{itemize}
Then there exists a sequence $(f_n,R_n)\in \R\times C^{\infty}((0,L),SO(3))$ such that $(f_n,R_n)\to (f^{crit},I)$ in $\R\times C^2((0,L))$ with $R_n\neq I$ being critical points of the energy $E_0^{f_n}$.
\end{theorem}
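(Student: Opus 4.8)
The plan is to recast the Euler--Lagrange system \eqref{ODE} as an abstract operator equation $G(\omega,f)=0$ to which the Crandall--Rabinowitz Theorem \ref{CR} applies, with the force $f$ as bifurcation parameter and the trivial branch $\omega\equiv 0$ corresponding to $R\equiv I$. Near the identity I would parametrize admissible configurations by their rotation vector: for small $\omega\in W^{2,2}((0,L),\R^3)$ set $R_\omega=\exp(\hat\omega)$, where $\hat\omega\in\mathbb{M}^{3\times 3}_{skew}$ is the skew matrix with $\omega_{\hat\omega}=\omega$. By the embedding $W^{2,2}\hookrightarrow C^1$ this is a smooth map into $W^{2,2}((0,L),SO(3))$ with $R_0=I$. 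I take $X$ to be the subspace of $W^{2,2}((0,L),\R^3)$ cut out by the essential parts of the boundary conditions (for \eqref{rigid}: $\omega_2,\omega_3$ vanishing at $0$ and $L$, with the $\omega_1$-translation gauge fixed by $\omega_1(0)=0$ exactly as in the proof of Theorem \ref{idstable}; the full Dirichlet conditions in the clamped-clamped case), $Y=L^2((0,L),\R^3)$ enriched by the finitely many natural boundary residuals appearing in \eqref{mixedbc}, and define $G(\omega,f)$ as the strong Euler--Lagrange residual of $f\mapsto E_0^f(R_\omega)$ together with those boundary values, so that $G(\omega,f)=0$ precisely characterizes critical points of $E_0^f$.

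Hypothesis (i) of Theorem \ref{CR}, namely $G(0,f)=0$ for all $f$, is exactly the already established fact that the straight configuration is a critical point of $E_0^f$ under both sets of boundary conditions. The required continuity of $G,G_\omega,G_f,G_{f\omega}$ follows from the smooth structure of the energy: composing $R_\omega=\exp(\hat\omega)$ with the extended functional $\tilde E_0$ and invoking Lemma \ref{frechetdiff} (with Remark \ref{frechetbc} for the boundary constraints) shows that $(\omega,f)\mapsto E_0^f(R_\omega)$ is smooth with the $L^\infty$-compactness of the low-order terms provided by the Sobolev embedding, and differentiating the resulting smooth gradient yields in particular the continuity of $G_{f\omega}$.

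The analytic core is the linearization $G_\omega(0,f^{crit})$. Differentiating twice recovers the second-variation functional $F^{f}$ of \eqref{secondvarid}, so that $G_\omega(0,f)=\mathcal L_f$ is the formally self-adjoint second-order operator associated to $F^f$. Since $F^f$ has coercive principal part $\langle\mathbf{C}\omega',\omega'\rangle$ and its lower-order terms define a compact perturbation on $X$, the operator $\mathcal L_{f^{crit}}$ is Fredholm of index zero with closed range $R(\mathcal L_{f^{crit}})=N(\mathcal L_{f^{crit}})^{\perp}$. The kernel is read off from the null modes of $F^{f^{crit}}$ analyzed in the proof of Theorem \ref{idstable}: exploiting the decoupling $F^f=F_1^f(\omega_1,\omega_3)+F_2^f(\omega_2)$, the hypotheses (i)--(ii) are precisely what guarantee that the coincidence $f_1^{crit}=f_2^{crit}$ is excluded (and, in the clamped-clamped case, that Lemma \ref{xstar} selects a single admissible value of $|\lambda|L$), so that only one of the two decoupled problems degenerates at $f^{crit}$ and its zero mode is simple. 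This gives $N(\mathcal L_{f^{crit}})=\mathrm{span}\{v\}$ with $v$ the explicit critical mode of Theorem \ref{idstable}. Verifying this one-dimensionality, by carefully tracking the spectral analysis of Theorem \ref{idstable} and excluding the two-dimensional resonant null space, is the step I expect to require the most care.

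For the transversality condition (iv) I would use self-adjointness: since $R(\mathcal L_{f^{crit}})=v^{\perp}$ in $L^2$, one has $G_{f\omega}(0,f^{crit})v\notin R(\mathcal L_{f^{crit}})$ if and only if $\langle G_{f\omega}(0,f^{crit})v,v\rangle\neq 0$. Differentiating $F^f$ in $f$ affects only the term $f(\omega_2^2+\omega_3^2)$, whence $\langle\partial_f\mathcal L_{f^{crit}}v,v\rangle=\int_0^L (v_2^2+v_3^2)\,\mathrm{d}x_1>0$, strict positivity holding because the critical mode is nontrivial in its $(\omega_2,\omega_3)$-components (for the $f_1^{crit}$-branch $v_3\neq 0$, for the $f_2^{crit}$-branch $v_2\neq 0$, as otherwise the Poincar\'e estimate forces $v\equiv 0$). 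All hypotheses of Theorem \ref{CR} being met, we obtain a $C^{n-1}$ branch $\{(f^{crit}+\varphi(t),\,tv+t\psi(t))\}$ of nontrivial solutions of $G=0$. Choosing $t_n\to 0$ and setting $f_n=f^{crit}+\varphi(t_n)$, $\omega_n=t_nv+t_n\psi(t_n)\to 0$ in $X\hookrightarrow C^1$, $R_n=R_{\omega_n}$, gives $R_n\neq I$, $R_n\to I$ and $f_n\to f^{crit}$; since each $R_n$ solves the constant-coefficient system \eqref{ODE} with $f=f_n$, the solutions are real-analytic so $R_n\in C^\infty((0,L),SO(3))$, and reading the second derivatives off the ODE promotes the convergence to $\R\times C^2((0,L))$. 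The clamped-clamped case is identical upon replacing the mixed conditions by the Dirichlet ones, and is left to the reader as indicated.
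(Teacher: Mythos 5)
You follow the same route as the paper: a Crandall--Rabinowitz argument (Theorem \ref{CR}) with $f$ as bifurcation parameter, the straight configuration as trivial branch, the kernel identified with the critical modes from Theorem \ref{idstable}, and transversality checked on that kernel. Your variations --- the exponential chart instead of the projection $\Pi_3$, the $W^{2,2}$ Hilbert setting instead of $C^2$, and transversality via the Fredholm-alternative criterion $\langle G_{f\omega}v,v\rangle\neq 0$ instead of the paper's explicit non-solvability computation --- are legitimate, and the last is in fact slicker than what the paper does.

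There is, however, a genuine gap in the weak-clamping case (ii), which is precisely the case the paper writes out: the linearized operator of your formulation is Fredholm of index $-1$, not $0$, so hypothesis (iii) of Theorem \ref{CR} fails as stated. Indeed, your $X$ carries five scalar conditions ($\omega_2,\omega_3$ vanishing at both endpoints plus the gauge $\omega_1(0)=0$), while your target $Y=L^2\times\R^2$ carries both natural residuals of \eqref{mixedbc}. For a second-order $3\times 3$ system with invertible leading coefficient $\mathbf{C}$, the operator from all of $W^{2,2}$ to $L^2$ is onto with six-dimensional kernel; restricting the domain to $X$ subtracts $5$ from the index and appending two scalar target components subtracts $2$, leaving $-1$. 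Since the kernel is one-dimensional (that part of your analysis is correct), the cokernel is two-dimensional. The self-adjointness argument you invoke does give index zero and $R(\mathcal L)=N(\mathcal L)^{\perp}$, but for a \emph{different} operator --- the self-adjoint realization of the second variation, whose domain incorporates natural boundary conditions and whose target is just $L^2$ --- not for the map $h\mapsto(\hbox{ODE part},\,h_1^{\prime}(0),\,h_1^{\prime}(L))$ from $X$ to $L^2\times\R^2$ that your construction actually produces; the two realizations cannot be conflated.

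The missing idea is the torque identity generated by the rotational symmetry about $e_1$: since $\langle e_1\times Re_1,e_1\rangle\equiv 0$, the $e_1$-component of the residual of \eqref{ODE} is the exact derivative $\frac{\mathrm{d}}{\mathrm{d}x_1}\langle R\,\mathbf{C}(\omega_{A}-ke_2),e_1\rangle$ for \emph{every} admissible $\omega$, not only for solutions; integrating, and using that $X$ enforces $R(0)e_1=R(L)e_1=e_1$, shows that $\int_0^L\langle\hbox{(ODE residual)},e_1\rangle\,\mathrm{d}x_1$ equals the difference of your two boundary residuals. Hence the range of $G$ lies in a fixed closed codimension-one subspace of $L^2\times\R^2$; restricting the target to it restores index zero and your proof then goes through. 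Equivalently, since in your exponential chart on $X$ the natural conditions are the honest linear constraints $\omega_1^{\prime}(0)=0$ and $\omega_1^{\prime}(L)=0$, you may impose exactly one of them in the domain and drop all boundary residuals from the target; the condition at the other endpoint is then automatic by the conservation law. This is what the paper's space $\mathcal V^{\prime}$ is engineered to do (it encodes the linearized condition at $0$ in the domain and keeps a single residual at $L$), and indeed the paper's own assertion that $\hat T$ is bijective onto $C^0\times\R$ is, strictly speaking, true only after the target is replaced by the same codimension-one subspace --- so the bookkeeping you missed is exactly the delicate point there as well. In the clamped-clamped case (i) your setup has no boundary residuals, the count gives index zero, and no repair is needed.
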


\begin{proof}
	As in the proof of Theorem \ref{mainlocal}, we let $U\subset\mathbb{M}^{3\times 3}$ be a small neighborhood of $SO(3)$ and let $\Pi_3:U\to SO(3)$ be the smooth projection onto $SO(3)$. We define the vector space 
	\begin{equation*}
	\mathcal{V}^{\prime}:=\{\varphi\in C^2([0,L],\mathbb{M}^{3\times 3}_{skew}):\;\varphi(0)=0,\,\langle\varphi^{\prime}(0)e_3,e_2\rangle=0,\,\varphi(L)e_1=0\}
	\end{equation*}
	and choose an open neighborhood $V^{\prime}$ of $0$ in the $C^2([0,L])$-topology such that $I+\varphi(x)\in U$ for all $\varphi\in V^{\prime}$. To reduce notation we introduce the auxiliary affine function $T:\mathbb{M}^{3\times 3}\to\R^{3}$ defined as $T(M)=-c_{23}m_{23}e_1+c_{13}(m_{13}-k)e_2-c_{12}m_{12}e_3$. We set $G:\R\times V^{\prime}\to C((0,L),\R^3)\times\R$ as
	\begin{equation*}
	G(t,\varphi)=\left(\begin{array}{c}\left(\Pi_3(I+\varphi)T(\Pi_3(I+\varphi)^TD \Pi_3(I+\varphi)\varphi^{\prime})\right)^{\prime}-te_1\times \Pi_3(I+\varphi)e_1\\
	\langle (\Pi_3(I+\varphi)^TD\Pi_3(I+\varphi)\varphi^{\prime})(L)e_3,e_2\rangle\end{array}\right),
	\end{equation*}
	so that due to the definition of $T$ and (\ref{vectordensity}), $G(t,\varphi)=0$ if and only if $R=\Pi_3(I+\varphi)$ solves (\ref{ODE}) and (\ref{mixedbc}) with $f=t$. Indeed, the boundary conditions (\ref{mixedbc}) follow from Lemma \ref{projection}, the second component of $G$ and the chain rule. Note that $G(t,0)=0$ and $G$ has the required regularity of Theorem \ref{CR} (ii). Next we calculate
	\begin{equation}\label{Gphi}
	G_\varphi(f^{crit},0)[h]=\left(\begin{array}{c}-c_{13}kh^{\prime}e_2+T(h^{\prime\prime})+c_{13}ke_2-f^{crit}e_1\times he_1\\
	\langle h^{\prime}(L)e_3,e_2\rangle\end{array}\right),
	\end{equation}
	where we used that $D \Pi_3(I)$ is the projection onto $M^{3\times 3}_{skew}$. We now define $\hat T:\mathcal{V}^{\prime}\to C((0,L),\R^3)\times\R$ as $\hat T(h)=(T(h^{\prime\prime})+c_{13}ke_2,\langle h^{\prime}(L)e_3,e_2\rangle)$ and we notice that this is a Fredholm operator of index zero since it is bijective. Since all the remaining terms in \eqref{Gphi} are a compact perturbation of $\hat T$, it follows that $G_\varphi(f^{crit},0)$ is a Fredholm operator with index zero, too. Moreover, if $G_\varphi(f^{crit},0)[h]=0$, then the axial vector $\omega_{h}$ of $h$ solves the system 
	\begin{equation}\label{bifurcationsystem}
	\begin{split}
	&c_{23}\omega_1^{\prime\prime}+c_{13}k\omega_3^{\prime}=0,\\
	&c_{13}\omega_2^{\prime\prime}-f^{crit}\omega_2=0,\\
	&c_{12}\omega_3^{\prime\prime}-c_{13}k\omega_1^{\prime}-f^{crit}\omega_3=0
	\end{split}
	\end{equation}
	together with the boundary conditions $\omega(0)=0,\,\omega_{2}(L)=\omega_3(L)=0$ and $\omega_1^{\prime}(L)=0$.
	The solutions are given by 
	\begin{equation*}
	\omega^{crit}(x_1)=\begin{pmatrix}
	Lc_{13}k(1-\cos(\frac{\pi}{L}x_1))\\ 0 \\ -\pi c_{23}\sin(\frac{\pi}{L}x_1)
	\end{pmatrix}.
	\end{equation*}
	if $f^{crit}=\frac{(c_{13}k)^2}{c_{23}}-\frac{\pi^2c_{12}}{L^2}$, while in the case $f^{crit}=-\frac{\pi^{2}c_{13}}{L^{2}}$ it is given by $\omega^{crit}(x_1)=(0,\sin(\frac{\pi}{L}x_1),0)$. Since we have assumed that $\frac{(c_{13}k)^2}{c_{23}}-\frac{\pi^2c_{12}}{L^2}\neq-\frac{\pi^2 c_{13}}{L^2}$ the kernel of the system (\ref{bifurcationsystem}) is one-dimensional and $\omega_{h}\in\text{span}\{\omega^{crit}\}$. 
	
	It now remains to check the transversality condition (iv). Note that
	$G_{t\varphi}(f^{crit},0)[h]=(-e_1\times he_1,0)$. We distinguish between two cases: If $f^{crit}=-\frac{\pi^2c_{13}}{L^2}$, then we have to be sure that the following system of differential equations has no solutions:
	\begin{equation*}
	-c_{13}kh^{\prime}e_2+T(h^{\prime\prime})+c_{13}ke_2-f^{crit}e_1\times he_1=\sin\left(\frac{\pi}{L}x_1\right)e_2.
	\end{equation*}
	The second component reads as
	\begin{equation*}
	c_{13}h_{13}^{\prime\prime}+\frac{\pi^2c_{13}}{L^2}h_{13}=\sin\left(\frac{\pi}{L}x_1\right).
	\end{equation*}
	Multiplying the equation with the right hand side and integrating by parts then gives a contradiction as the left hand side vanishes since $h_{13}(0)=h_{13}(L)=0$. In the case $f^{crit}=\frac{(c_{13}k)^2}{c_{23}}-\frac{\pi^2c_{12}}{L^2}$, we obtain the differential equation
	\begin{equation*}
	-c_{13}kh^{\prime}e_2+T(h^{\prime\prime})+c_{13}ke_2-f^{crit}e_1\times he_1=-\pi c_{23}\sin\left(\frac{\pi}{L}x_1\right)e_3
	\end{equation*}
	and the additional condition $h_{23}^{\prime}(L)=0$. Hence, using also the other boundary conditions in $\mathcal{V}^{\prime}$ we can integrate the first component and obtain $c_{23}h_{23}^{\prime}=-c_{13}kh_{12}$,
	which turns the third component into
	\begin{equation*}
	-\frac{\pi^2c_{12}}{L^2}h_{12}-c_{12}h_{12}^{\prime\prime}=-\pi c_{23}\sin\left(\frac{\pi}{L}x_1\right).
	\end{equation*}
	Again multiplying the equation with the right hand side and integrating by parts leads to a contradiction as the left hand side vanishes due to the boundary conditions $h_{12}(0)=h_{12}(L)=0$. 
	
Now we can apply Theorem \ref{CR} to obtain a sequence $(t_n,\varphi_n)$ converging to $(f^{crit},0)$ such that $G(t_n,\varphi_n)=0$. Hence setting $f_{n}=t_{n}$ and $R_{n}=\Pi_3(I+\varphi_n)$ we have that $R_{n}$ are critical points of $E_{0}^{f_{n}}$. Notice that, since $\varphi_n(x)\in\mathbb{M}^{3\times 3}_{skew}$, $\varphi_{n}\neq 0$ and $\mathbb{M}^{3\times 3}_{skew}$ is the tangent space of $SO(3)$ at $I$, it follows that $\Pi_3(I+\varphi_n)\neq I$. The $C^{\infty}$-regularity of $R_n$ follows by a standard bootstrap argument in \eqref{ELbc}.
\end{proof}
 
\subsection{Stability of helical solutions}\label{stabhel}
In this section we apply the results of Section \ref{convisolated} to discuss the stability of helical solutions to \eqref{ELbc} in the clamped-clamped case under their own Dirichlet boundary conditions. By this we mean that we will consider as boundary data $R_{0}$ and $R_{L}$ in \eqref{clamp-clamp} as $R_{0}=R(0)$ and $R_{L}=R(L)$. It is worth noticing that these are the only possible boundary data for proper helices (by this we mean configurations having both non-trivial curvature and torsion) which in general do not satisfy the natural boundary conditions in \eqref{neumannbc} (see Remark \ref{bdproperhelix} below). 

As in the previous section we focus on the isotropic case, thus considering the functional (\ref{example}). 
In terms of the limit variable $R$ we define a helix to be a function of the form
\begin{equation}\label{defhelix}
R(x_1)=R_0e^{Ax_1}Q(\beta x_1+c),
\end{equation}
where $A\in{\mathbb M}^{3\times3}_{skew}$ is a constant matrix, $Q(\theta)=(e_1|\cos(\theta)e_2+\sin(\theta)e_3|\cos(\theta)e_3-\sin(\theta)e_2)$, $R_0\in SO(3),\beta,c\in \R$. The definition is motivated by the fact that under these assumptions the first column of $R(x_{1})$ is the tangent vector of a curve having constant curvature and torsion (a property characterizing helices). In \eqref{defhelix} the matrix $R_{0}$ (which does not affect the energy) is responsible of a constant rotation of the coordinates system while $Q$, through the {\em twist} parameter $\beta$ and the phase shift $c$, accounts for the possibility of rotating the vectors $d_{2},d_{3}$ at constant velocity in the plane orthogonal to the helix. We mention that a first effect of having intrinsic curvature in the functional \eqref{example} is that of ruling out (proper) helical solutions with $\beta\neq 0$. This effect has been already observed in the literature (see the appendix in \cite{tendril}). 
Here for reader's convenience such a feature will be shortly discussed in the proof of Lemma \ref{statiohelix} among other properties. 

In this case it is an easy computation to get that 
\begin{align}\label{omegaR}
\omega_{R^{T}R'}(x_1)=\begin{pmatrix} \beta-a_{23}\\ a_{13}\cos(\beta x_1+c)-a_{12}\sin(\beta x_1+c)\\ -a_{12}\cos(\beta x_1+c)-a_{13}\sin(\beta x_1+c)\end{pmatrix}.
\end{align}
Moreover we remark that the following equivalent formula for \eqref{defhelix} holds by the definition of exponential of a matrix and since $Q\in SO(3)$:
\begin{equation}\label{Qriparameter}
R(x_1)=R_0Q(\beta x_1+c)e^{(Q^{T}(\beta x_1+c)AQ(\beta x_1+c))x_1}
\end{equation}

We have the following proposition we discuss some stationarity conditions for helical solutions.

\begin{proposition}\label{statiohelix}
Assume that an helix $R$ as in (\ref{defhelix}) is a stationary point of the functional $E_0^f$ defined in (\ref{example}) subject to its own boundary conditions at $x_1=0,L$.  Then there exist $A\in{\mathbb M}^{3\times3}_{skew}$ such that $R(x_1)=R(0)e^{Ax_1}$. Furthermore, if additionally $f\neq 0$, then $A R(0)^Te_1=0$.
\end{proposition}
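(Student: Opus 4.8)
The plan is to read off the interior Euler--Lagrange system \eqref{ODE} and rewrite it in the body frame, where the explicit Darboux vector \eqref{omegaR} of a helix becomes easy to handle. Since $R$ is stationary subject to its own clamped--clamped data, the test space is $W^{1,2}_0((0,L),\mathbb{M}^{3\times3}_{skew})$, so the weak form \eqref{weakform} gives \eqref{ODE} in the interior with force $fe_1$, namely $(R\nabla_y q_2(\omega))'=2fe_1\times Re_1$, where I abbreviate $\omega:=\omega_{R^TR'}$ and, by \eqref{vectordensity}, $\nabla_y q_2(\omega)=2\mathbf{C}(\omega-ke_2)$. Left--multiplying by $R^T$, using $R^T(a\times b)=(R^Ta)\times(R^Tb)$, and setting $u:=R^Te_1$ (so that $u'=u\times\omega$ and $|u|\equiv1$), I would reduce \eqref{ODE} to the purely body--frame identity
\begin{equation*}
\mathbf{C}\omega'+\omega\times\mathbf{C}(\omega-ke_2)=f\,u\times e_1 ,
\end{equation*}
which is the cleaner object to analyse.

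For the first assertion I would extract the $e_1$--component. Since $\omega_1=\beta-a_{23}$ is constant and $u\times e_1=(0,u_3,-u_2)$ has no $e_1$--component, this line collapses to the scalar relation
\begin{equation*}
\omega_3\bigl[(c_{12}-c_{13})\omega_2+c_{13}k\bigr]=0\qquad\text{on }(0,L).
\end{equation*}
Both factors are analytic in $x_1$ (finite trigonometric sums, by \eqref{omegaR}), so their product vanishes identically only if one factor does. Arguing by contradiction, suppose $\omega$ is non--constant; by \eqref{omegaR} this forces $\beta\neq0$ and $(a_{12},a_{13})\neq(0,0)$, whence $\omega_3\not\equiv0$. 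Then the bracket vanishes identically, i.e. $(c_{12}-c_{13})\omega_2\equiv-c_{13}k$; since $\omega_2$ is then non--constant this needs $c_{12}=c_{13}$ and hence $c_{13}k=0$, contradicting $c_{13}>0$ and $k>0$ (recall that $k$ in \eqref{curvature-prestrain} is strictly positive in this setting). Therefore $\omega$ is constant, so $A:=R^TR'$ is a constant skew matrix and $R(x_1)=R(0)e^{Ax_1}$, the first claim.

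For the second assertion I would substitute $\omega'\equiv0$ back into the body--frame identity, which now reads $\omega\times\mathbf{C}(\omega-ke_2)=f\,u\times e_1$ with a constant left--hand side. If $f\neq0$ this forces $u\times e_1$ to be a constant vector; as $u\times e_1=(0,u_3,-u_2)$, the components $u_2,u_3$ are constant, and because $|u|\equiv1$ the continuous scalar $u_1$ has constant square, hence is itself constant. Thus $u$ is constant, so $0=u'=u\times\omega$, giving $\omega\times R(0)^Te_1=\omega\times u(0)=0$, that is $AR(0)^Te_1=\omega\times R(0)^Te_1=0$ (the case $\omega=0$ being trivial).

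The only genuinely delicate point is the identity--principle step in the first part: one must argue that the pointwise product being zero forces one analytic factor to vanish identically, and then invoke $k>0$ to exclude the degenerate balance $c_{12}=c_{13}$, $c_{13}k=0$. This is exactly where intrinsic curvature rules out proper (twisted) helices, matching the heuristic announced before the statement. Everything else is a direct computation, and the smoothness of the helix makes the passage from the weak form to the pointwise ODE routine.
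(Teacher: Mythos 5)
Your proof is correct and follows essentially the same route as the paper's: both reduce the Euler--Lagrange equation \eqref{ELhelix} to its first body-frame component $\omega_3\bigl[(c_{12}-c_{13})\omega_2+c_{13}k\bigr]=0$, use the trigonometric structure of \eqref{omegaR} together with $c_{13}k\neq 0$ to force $\omega$ (equivalently $\beta$, hence $A$) to be constant, and then play the constancy of the moment term against the $x_1$-dependence of $R^T(x_1)e_1$ to obtain $AR(0)^Te_1=0$ when $f\neq 0$. The only differences are cosmetic: you absorb the degenerate case $a_{12}=a_{13}=0$ into one unified contradiction argument (the paper treats it separately before assuming $a_{12}^2+a_{13}^2>0$), and you conclude the $f\neq 0$ step by observing directly that $u=R^Te_1$ must be constant, rather than differentiating the equation and ruling out $AR(0)^Te_1=\theta e_1$ with $\theta\neq 0$ by contradiction; both variants are sound.
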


\begin{proof}
Let us consider $R$ as in \eqref{defhelix} and set $\omega=\omega_{R^{T}R'}$ for notational simplicity. First notice, that the first part of the statement is satisfied in the case
$a_{12}=a_{13}=0$, which corresponds to a twisted straight rod. Indeed, by \eqref{omegaR}, in this case the matrix $\tilde{A}:=R^{T}R'$ is constant, and the statement follows upon replacing $A$ with $\tilde A$ and with $R(0)=R_0\,Q(c)$, since the initial and differential condition together uniquely determine $R(x_1)$. We can therefore additionally assume that
\begin{align}\label{addition}
a_{12}^2+a_{13}^2>0\,.
\end{align}
Requiring $R$ to be a stationary point of $E_0^f$ subject to its own boundary conditions amounts to solving the following Euler-Lagrange equation:
\begin{equation}\label{ELhelix}
\left(R(x_1){\mathbf C}(\omega(x_{1})-ke_{2})\right)^{\prime}=fe_1\times R(x_1)e_1.
\end{equation}
On multiplying the differential equation above by $R^T(x_1)$ and taking the first component we have
\begin{equation*}
-c_{13}\omega_{3}(x_1)(\omega_{2}(x_1)-k)+c_{12}\omega_{2}(x_1)\omega_{3}(x_1)=0.
\end{equation*}
This implies that, for all $x_1 \in [0,L]$, it holds either $\omega_{3}(x_1)=0$ or $(c_{12}-c_{13})\omega_2(x_1)=-c_{13}k$. Since $k \neq 0$, it follows that either $\omega_3(x_1)$ or $\omega_2(x_1)$ has to be constant on a non-empty interval: looking at \eqref{omegaR} and \eqref{addition}, this is only possible when $\beta=0$. Then, using \eqref{Qriparameter}, the first part of the statement follows upon replacing $A$ with $Q^{T}(c)AQ(c)$ and for $R(0)=R_0\,Q(c)$.

For the second part of the statement, we begin by observing that, for $R(x_1)=R(0)e^{Ax_1}$, (\ref{ELhelix}) further simplifies to
\begin{equation*}
A(-c_{23}a_{23}e_1+c_{13}(a_{13}-k)e_2-c_{12}a_{12}e_3)=fR^T(x_1)e_1\times e_1.
\end{equation*}
As we assume that $f\neq 0$, differentiating the above equation again yields
\begin{equation}\label{necessary}
e^{-Ax_1}AR(0)^Te_1\times e_1=0.
\end{equation}
For $x_1=0$ we get that there exists $\theta \in \R$ such that $AR(0)^Te_1=\theta e_1$. Therefore we are only left to show that $\theta=0$. If we assume by contradiction that this does not hold, inserting again  into \eqref{necessary} gives $e^{-Ax_1}e_1\times e_1=0$; since $e^{-Ax_1} \in SO(3)$, this implies $e^{-Ax_1}e_1=e_1$ for all $x_1$, and consequently $Ae_1=0$. Since $A$ is skew-symmetric we then have $a_{11}=a_{12}=a_{13}=0$. This entails that the first component of the vector $AR(0)^Te_1$ must be zero, and contradicts the assumption $\theta \neq 0$. The proof is therefore concluded.
\end{proof}
\begin{remark}\label{bdproperhelix}
Let us consider the energy \eqref{example}. For a possibly twisted helix as in \eqref{omegaR} the natural boundary conditions \eqref{neumannbc} corresponding to weak clamping imply that the first component in \eqref{omegaR} must vanish, that is $\beta=a_{23}$. On the other, as shown in the previous proposition, a helix having non-trivial curvature can be stable only if $\beta=0$. Hence only a curved beam can be stable. This justifies our choice of Dirichlet boundary conditions in order to consider proper helices.
\end{remark}

We now derive from Theorem \ref{mainlocal} a sufficient condition for strict local minimality of helices.

\begin{proposition}\label{cpstable}
Let an helix $R(x_1)=R(0)e^{Ax_1}$ be a stationary point of the functional $E_0^f$ in (\ref{example}) subject to its own boundary conditions. Let $\Omega_{A,k}\in{\mathbb M}^{3\time 3}_{skew}$ be the matrix having as axial vector ${\mathbf C}(\omega_{A}-k e_{2})$ and let $r=R(0)^{T}e_{1}$.
Setting 
\begin{equation}\label{DB}
\begin{split}
D&=\frac{1}{2}\Omega_{A,k}-A{\mathbf C},\\
B&=A^T{\mathbf C}A+\frac{1}{2}(\Omega_{A,k}A+A\Omega_{A,k})+fr_1(e_2\otimes e_2+e_3\otimes e_3)-\frac{fr_2}{2}(e_1\otimes e_2+e_2\otimes e_1),
\end{split}
\end{equation}
consider the linear system with constant coefficients
\begin{equation}\label{secondorder}
{\mathbf C}\xi^{\prime\prime}=(D-D^T)\xi^{\prime}+B\xi.
\end{equation}
If for all $t\in(0,L]$ \eqref{secondorder} has only the trivial solution $\xi=0$ within the space $W^{1,2}_0((0,t),\R^3)$, then
$R(x_{1})$ is a strict local minimizer of $E_{0}^{f}$.
\end{proposition}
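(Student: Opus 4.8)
The strategy is to verify the hypotheses of Theorem \ref{mainlocal}. Since $R$ is by assumption a stationary point of $E_0^f$ subject to its own (clamped--clamped) boundary conditions, it solves the weak Euler--Lagrange equation \eqref{weakform}, so here the relevant test space is $\mathcal T=W^{1,2}_0((0,L),\mathbb M^{3\times 3}_{skew})$. It therefore remains only to show that the second variation is coercive, $D^2E_0^f(R)[BR,BR]\ge c\|BR\|_{W^{1,2}}^2$ for some $c>0$ and all $B\in\mathcal T$; once this is done, Theorem \ref{mainlocal} yields that $R$ is a strict local minimizer of $E_0^f$ (we only need its one-dimensional conclusion, via Proposition \ref{weak-strong}).

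The first step is to rewrite the second variation \eqref{secondvariation} in constant-coefficient form through the change of variables to the body frame $\xi:=R^T\omega_B$. Writing $\omega_B=R\xi$ and using $R'=RA$ one gets $R^T\omega_{B'}=\xi'+A\xi$, so the principal term becomes $\langle\mathbf C(\xi'+A\xi),\xi'+A\xi\rangle$, producing the coercive leading term $\langle\mathbf C\xi',\xi'\rangle$ and the zeroth-order contribution $\langle A^T\mathbf CA\xi,\xi\rangle$. Because $A$ is constant along the helix, $\nabla_yq_2(x_1,\omega_A)=2\mathbf C(\omega_A-ke_2)$ is constant by \eqref{vectordensity}; together with $R^T(\omega_B\times\omega_{B'})=\xi\times(\xi'+A\xi)$ and the scalar triple product, the term $-\langle R\nabla_yq_2(\omega_A),B\omega_{B'}\rangle$ turns into constant-coefficient quadratic expressions in $\xi,\xi'$ carrying exactly the matrix $\Omega_{A,k}$. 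For the force term $-2\langle f,B^2Re_1\rangle$ one uses $B^2=\omega_B\otimes\omega_B-|\omega_B|^2I$ and $R^Te_1=e^{-Ax_1}r$; when $f\neq 0$, Proposition \ref{statiohelix} gives $Ar=0$, hence $R^Te_1=r$ is constant, while for $f=0$ this term vanishes. All coefficients are thus constant, and a direct computation identifies
\[
D^2E_0^f(R)[BR,BR]=\tfrac12\,\mathcal Q(\xi),\quad \mathcal Q(\xi):=\int_0^L\langle\mathbf C\xi',\xi'\rangle-2\langle D\xi,\xi'\rangle+\langle B\xi,\xi\rangle\,\mathrm{d}x_1,
\]
with $D,B$ as in \eqref{DB}; by construction the Euler--Lagrange equation of $\mathcal Q$ is precisely the Jacobi system \eqref{secondorder} (note that only the skew part $D-D^T$ enters it). Since $R$ is smooth with values in $SO(3)$, the map $B\mapsto\xi$ is a linear isomorphism of $W^{1,2}_0((0,L),\mathbb M^{3\times3}_{skew})$ onto $W^{1,2}_0((0,L),\R^3)$ with $\|BR\|_{W^{1,2}}\simeq\|\xi\|_{W^{1,2}}$, so coercivity of $\mathcal Q$ is equivalent to coercivity of the second variation.

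The heart of the proof is to deduce positivity of $\mathcal Q$ from the absence of conjugate points, by a classical Jacobi-type continuation argument. For $t\in(0,L]$ let $\mathcal Q_t$ be the same form on $W^{1,2}_0((0,t),\R^3)$ and set $\phi(t)=\inf\{\mathcal Q_t(\xi):\xi\in W^{1,2}_0((0,t),\R^3),\ \|\xi'\|_{L^2(0,t)}=1\}$. Rescaling to the fixed interval $(0,1)$ shows that $\phi$ is continuous on $(0,L]$, and since $\mathbf C=\mathrm{diag}(c_{23},c_{13},c_{12})$ is positive definite, the Poincar\'e inequality yields $\mathcal Q_t(\xi)\ge(c_{\min}-Ct)\|\xi'\|^2_{L^2}$ with $c_{\min}:=\min\{c_{12},c_{13},c_{23}\}$, so $\phi(t)>0$ for $t$ small. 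Suppose, for contradiction, that $\mathcal Q$ is not positive definite; then $\phi(L)\le 0$, and by the intermediate value theorem there is a smallest $t_0\in(0,L]$ with $\phi(t_0)=0$. By the direct method---weak $W^{1,2}_0$-compactness, weak lower semicontinuity of the principal term, and strong $L^2$-convergence of the lower-order terms---the value $\phi(t_0)=0$ is attained at some $\xi_0\neq 0$ (if the weak limit were $0$ the lower-order terms would vanish and $\mathcal Q_{t_0}$ would stay $\ge c_{\min}>0$). Moreover $\mathcal Q_{t_0}(\xi)\ge\phi(t_0)\|\xi'\|^2_{L^2}=0$ for all $\xi$, so $\xi_0$ is an unconstrained minimizer of the nonnegative form $\mathcal Q_{t_0}$ and therefore solves its Euler--Lagrange equation, i.e.\ the Jacobi system \eqref{secondorder} on $(0,t_0)$, with $\xi_0\in W^{1,2}_0((0,t_0),\R^3)$ nontrivial. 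This contradicts the hypothesis. Hence $\mathcal Q$ is positive definite on $W^{1,2}_0((0,L),\R^3)\setminus\{0\}$.

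Finally, positivity upgrades to coercivity by the same compactness scheme: if $\mathcal Q(\xi_n)\to\inf\le 0$ along $\|\xi_n'\|_{L^2}=1$, extract $\xi_n\rightharpoonup\xi$ in $W^{1,2}_0$ and $\xi_n\to\xi$ in $L^2$; the lower-order terms converge and the principal term is weakly lower semicontinuous, so $\mathcal Q(\xi)\le 0$, forcing $\xi=0$, whence $\mathcal Q(\xi_n)=\langle\mathbf C\xi_n',\xi_n'\rangle_{L^2}+o(1)\ge c_{\min}+o(1)>0$, a contradiction. Thus $\mathcal Q(\xi)\ge c\|\xi'\|^2_{L^2}\simeq c\|\xi\|^2_{W^{1,2}}$, equivalently $D^2E_0^f(R)[BR,BR]\ge c\|BR\|^2_{W^{1,2}}$, and Theorem \ref{mainlocal} gives the conclusion. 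The two main obstacles are the constant-coefficient reduction of the second step---matching the cross and zeroth-order terms precisely to $D$ and $B$ in \eqref{DB}, exploiting $R^TR'=A$ and $R^Te_1=r$ constant---and the careful execution of the conjugate-point continuation, particularly the justification that $\phi(t_0)=0$ is attained at a nontrivial minimizer solving \eqref{secondorder}.
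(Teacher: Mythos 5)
Your proposal is correct and follows essentially the same route as the paper: reduce to the coercivity hypothesis of Theorem \ref{mainlocal} via Proposition \ref{weak-strong}, pass to the body frame $\xi=R^T\omega_B$ (using Proposition \ref{statiohelix} to make $R^Te_1=r$ constant when $f\neq 0$, the force term vanishing when $f=0$) so that the second variation becomes the constant-coefficient form with $D,B$ as in \eqref{DB}, and then invoke the conjugate-point criterion for \eqref{secondorder}. The only difference is that the paper outsources the implication ``no conjugate points in $(0,L]$ $\Rightarrow$ coercivity'' to the literature (Ioffe--Tihomirov, \S 6.3), whereas you prove it directly with a continuation argument in $t$ plus the direct method; this self-contained step is sound and makes the argument independent of the cited reference.
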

\begin{proof}
According to Section \ref{convisolated}, the class of admissible test functions for Dirichlet boundary conditions is $W_{0}^{1,2}((0,L),{\mathbf M}^{3\times 3}_{skew})$. For $B$ in such a space we set $\xi=R^T\omega_{B}$ which belongs to $W^{1,2}_0((0,L),\R^3)$. Since the helix $R$ is fixed, one can easily show that there exists a constant (that is uniform whenever $R^{\prime}$ is in a bounded set of $L^2$) such that 
\begin{equation}\label{norms}
\|\xi\|_{W^{1,2}}\geq c\|BR\|_{W^{1,2}}.
\end{equation}
We first consider the case $f\neq 0$. In this case Theorem \ref{statiohelix} implies that $R^T(x_1)e_1=r$ for all $x_{1}\in [0,L]$. By this, the second variation $D^2E^f_0(R)[BR,BR]$ in \eqref{secondvariation} can be written in terms of $\xi$ as 
\begin{equation*}
F(\xi):=\int_0^L \langle {\mathbf C}(\xi^{\prime}+A\xi),\xi^{\prime}+A\xi\rangle+\langle\xi,\Omega_{A,k} (\xi^{\prime}+A\xi)\rangle+f\langle\xi\times r,\xi\times e_1\rangle\,\mathrm{d}x_1.
\end{equation*}
By Theorem \ref{mainlocal} and \eqref{norms}, we simply need to show that $F(\xi)\geq c\|\xi\|^2_{W^{1,2}}$. 
Using that the matrix $B$ is symmetric, one can collect the terms in the above expression to get
\begin{equation*}
F(\xi)=\int_0^L\langle {\mathbf C}\xi^{\prime},\xi^{\prime}\rangle+2\langle\xi,D\xi^{\prime}\rangle+\langle B\xi,\xi\rangle\,\mathrm{d}x_1.
\end{equation*}
Since ${\mathbf C}$ is positive definite, the stability of such a quadratic functional can be studied via the method of conjugate points, that is we search for non-trivial solutions $\xi\in W^{1,2}_0((0,t),\R^3)$ of the equation \eqref{secondorder} where $t\in(0,L]$. It is well-known that if there is no such $t$, then the required estimate holds (see for example \cite{thexprob} \S 6.3, Theorem 6 and the corresponding corollary). This concludes the proof in the case $f\neq 0$.

In the case $f=0$, the proof follows the same line as above. Notice that the property $R(x_1)^{T}e_{1}=r$ may be no longer satisfied, but at the same time $r$ does not intervene in the definition of $B$ since $f=0$.
\end{proof}

The existence of non-trivial solutions to the system \eqref{secondorder} is equivalent to an algebraic condition. In the next proposition we give a precise statement for such a condition.
\begin{proposition}\label{howtosolve}
Let an helix $R(x_1)=R(0)e^{Ax_1}$ be a stationary point of the functional $E_0^f$ in (\ref{example}) subject to its own boundary conditions. For $D$ and $B$ as in \eqref{DB} and $t\in \R$ consider the matrix $M(t)\in{\mathbb M}^{3\times 3}$, defined through
\begin{equation*}
M(t)=\left(\begin{array}{c|c|c}
\xi_1(t) & \xi_2(t) & \xi_3(t)
\end{array}\right),
\end{equation*}
where for $i=1,2,3$ the function $\xi_i(t)$ is the unique solution of \eqref{secondorder} with the initial data $\xi_i(0)=0$ and $\xi_i^\prime(0)=e_i$. If for all $t\in (0,L]$ it holds that $\det(M(t))\neq 0$, then $R(x_{1})$ is a strict local minimizer of $E_{0}^{f}$.
\end{proposition}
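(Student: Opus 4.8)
The plan is to reduce the statement to Proposition~\ref{cpstable} by translating its conjugate-point hypothesis—absence of non-trivial solutions of \eqref{secondorder} in $W^{1,2}_0((0,t),\R^3)$ for every $t\in(0,L]$—into the algebraic condition $\det(M(t))\neq 0$. First I would observe that, since $\mathbf C=\mathrm{diag}(c_{23},c_{13},c_{12})$ is positive definite and hence invertible, the system \eqref{secondorder} can be rewritten in normal form $\xi''=\mathbf C^{-1}(D-D^T)\xi'+\mathbf C^{-1}B\xi$, a second-order linear ODE with constant coefficients. Its solution space is six-dimensional, each solution being uniquely determined by the Cauchy data $(\xi(0),\xi'(0))\in\R^3\times\R^3$, and every solution is automatically smooth.

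Next I would address regularity so that the weak notion of solution in $W^{1,2}_0$ matches the classical one. A function $\xi\in W^{1,2}_0((0,t),\R^3)$ solving \eqref{secondorder} satisfies $\xi''\in L^2$, hence $\xi\in W^{2,2}((0,t))\hookrightarrow C^1([0,t])$, and a bootstrap argument promotes it to $C^\infty$; moreover, since $W^{1,2}$ functions on an interval are absolutely continuous, the membership $\xi\in W^{1,2}_0$ is exactly the pointwise requirement $\xi(0)=\xi(t)=0$. Thus a non-trivial element of $W^{1,2}_0((0,t),\R^3)$ solving \eqref{secondorder} is precisely a non-trivial classical solution of the two-point boundary value problem with homogeneous Dirichlet data at both endpoints.

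I would then carry out the linear-algebra step. The solutions with $\xi(0)=0$ form a three-dimensional subspace spanned by the fundamental solutions $\xi_1,\xi_2,\xi_3$ fixed in the statement, so that any such solution is $\xi=\sum_{i=1}^3\lambda_i\xi_i$ for a unique $\lambda=(\lambda_1,\lambda_2,\lambda_3)\in\R^3$. Evaluating at $t$ gives $\xi(t)=\sum_{i=1}^3\lambda_i\xi_i(t)=M(t)\lambda$, so the second endpoint condition $\xi(t)=0$ reads $M(t)\lambda=0$. Consequently a non-trivial solution in $W^{1,2}_0((0,t),\R^3)$ exists if and only if $\ker M(t)\neq\{0\}$, that is, if and only if $\det(M(t))=0$. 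The hypothesis $\det(M(t))\neq 0$ for all $t\in(0,L]$ therefore rules out conjugate points, and Proposition~\ref{cpstable} yields that $R(x_1)$ is a strict local minimizer of $E_0^f$.

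The computations here are entirely routine; the only point requiring care—and the one I would treat as the main (mild) obstacle—is the identification of the weak $W^{1,2}_0$-formulation of the conjugate-point condition with the classical boundary value problem, ensuring both interior smoothness of weak solutions and the correct pointwise interpretation of the homogeneous Dirichlet data at the two endpoints. Once this is settled, the equivalence with the vanishing of $\det(M(t))$ is immediate.
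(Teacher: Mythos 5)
Your proposal is correct and follows essentially the same route as the paper: the paper's own proof consists precisely of the observation that every solution of \eqref{secondorder} with $\xi(0)=0$ is a linear combination of $\xi_1,\xi_2,\xi_3$, so that a non-trivial solution vanishing at $t$ exists if and only if $\det(M(t))=0$, and then invokes Proposition \ref{cpstable}. The only difference is that you spell out the (routine) regularity identification between weak $W^{1,2}_0$ solutions and classical solutions of the two-point boundary value problem, which the paper leaves implicit.
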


\begin{proof}
Since every solution $\xi$ of \eqref{secondorder} satisfying $\xi(0)=0$ can be written as a linear combination of $\xi_1$, $\xi_2$, and $\xi_3$, the condition $\xi(t)=0$ is equivalent to  $\det(M(t))= 0$. The conclusion  follows by Proposition \ref{cpstable}.
\end{proof}

\subsection{Numerical results}

By using the results of the previous section, we here aim to provide some numerical evidence for an experimentally observed behavior of the physical model we are considering (see \cite{Plos}). At the critical force $f^{crit}$ (here assumed to be non zero) provided in Theorem \ref{idstable}, under some condition on the parameters of the problem, helical solutions arbitrarily close at the origin to the straight configuration emerge as a branch of local minima of the energy $E_{0}^{f}$ with respect to their own boundary conditions. Our stability analysis can be related to the results in \cite{Plos} at least for the experiments that are conducted in a quasi-static regime (see \cite[S2.2]{Plossup}).

More precisely we look for helical local minima of $E_{0}^{f}$ under Dirichlet boundary conditions close to the identity matrix. 

We start by proving that for every given $f\neq 0$ there exists a sequence of stationary helical solutions for the energy $E_0^{f}$ converging uniformly to the straight configuration. 

\begin{proposition}\label{flathelix}
Assume that $f\neq 0$. Then there exists a family $R_{\delta}(x_{1}):=R_{\delta}(0)e^{A_{\delta}x_{1}}$ of helical stationary points of $E_{0}^{f}$ with Dirichlet boundary conditions such that $R_{\delta}\to I$ in  $C^{\infty}([0,L],SO(3))$.
\end{proposition}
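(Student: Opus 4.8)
The plan is to use Proposition~\ref{statiohelix} to reduce the stationarity of a helix $R_\delta(x_1)=R_\delta(0)e^{A_\delta x_1}$ (i.e. a helix with $\beta=0$ in \eqref{defhelix}) to a single algebraic equation for the axial vector $\omega=\omega_{A_\delta}$, and then to produce the branch by the implicit function theorem after a rescaling that removes a singularity located exactly at the straight configuration. First I would record that for $R=R(0)e^{Ax_1}$ with constant $A\in\mathbb{M}^{3\times 3}_{skew}$ one has $R'=RA$, hence $R^TR'=A$ and $\omega:=\omega_A$ is constant. Since $f\neq0$, Proposition~\ref{statiohelix} forces $Ar=0$ with $r=R(0)^Te_1$; so $r$ is parallel to $\omega$ and $R^T(x_1)e_1\equiv r$. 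Inserting this into \eqref{ELhelix} and multiplying by $R^T(x_1)$ collapses the Euler--Lagrange equation to the $x_1$-independent identity
\begin{equation*}
\omega\times\mathbf{C}(\omega-ke_2)=f\,r\times e_1,\qquad r=\frac{\omega}{|\omega|}. \tag{$\dagger$}
\end{equation*}
Conversely, any $\omega\neq0$ solving $(\dagger)$ yields a stationary helix once $R(0)$ is taken to be \emph{any} rotation with $R(0)^Te_1=\omega/|\omega|$ (then $Ar=A\omega/|\omega|=0$ automatically, and the self-imposed Dirichlet data hold trivially). Thus the whole task becomes: find a one-parameter family of solutions $\omega_\delta$ of $(\dagger)$ with $\omega_\delta\to0$.

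Writing out the three components of $(\dagger)$, the first two read $\omega_3[(c_{12}-c_{13})\omega_2+c_{13}k]=0$ and $\omega_3[(c_{23}-c_{12})\omega_1-f/|\omega|]=0$. Because $c_{13}k\neq0$, neither is compatible with both $\omega_3\neq0$ and $\omega\to0$, so I would look for \emph{flat} helices with $\omega_3=0$ (the axis $r$ then lies in the $e_1$--$e_2$ plane, which explains the name). With $\omega_3=0$ the first two equations hold identically and only the third survives:
\begin{equation*}
\Phi(\omega_1,\omega_2):=\omega_1\big[(c_{13}-c_{23})\omega_2-c_{13}k\big]+\frac{f\,\omega_2}{\sqrt{\omega_1^2+\omega_2^2}}=0. \tag{$\ddagger$}
\end{equation*}

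The main obstacle is that $\Phi$ fails to be differentiable precisely at $(\omega_1,\omega_2)=0$, the very point the family must approach, so the implicit function theorem cannot be applied directly. I would overcome this by anticipating the correct relative orders of the two components: the leading balance $-c_{13}k\,\omega_1+f\,\omega_2/|\omega|\approx0$ forces $\omega_2=O(\omega_1^2)$, which suggests setting $\omega_1=\delta$ and $\omega_2=\delta^2\sigma$. Then $|\omega|=\delta\sqrt{1+\delta^2\sigma^2}$ and, dividing $(\ddagger)$ by $\delta$, one obtains
\begin{equation*}
\tilde\Phi(\sigma,\delta):=(c_{13}-c_{23})\delta^2\sigma-c_{13}k+\frac{f\sigma}{\sqrt{1+\delta^2\sigma^2}}=0,
\end{equation*}
which \emph{is} smooth near $(\sigma,\delta)=(c_{13}k/f,0)$ since the square root stays bounded away from $0$. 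As $\tilde\Phi(c_{13}k/f,0)=0$ and $\partial_\sigma\tilde\Phi(c_{13}k/f,0)=f\neq0$, the implicit function theorem provides a smooth $\delta\mapsto\sigma(\delta)$ with $\sigma(0)=c_{13}k/f$ solving $\tilde\Phi(\sigma(\delta),\delta)=0$ for small $|\delta|$; both uses of $f\neq0$ are essential here, and this rescaling step is the crux of the argument.

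Finally I would set $\omega_\delta=(\delta,\delta^2\sigma(\delta),0)$, let $A_\delta$ be the associated skew matrix, $r_\delta=\omega_\delta/|\omega_\delta|$, and choose $R_\delta(0)$ to be the rotation carrying $r_\delta$ to $e_1$ (smooth and close to $I$ for $r_\delta$ near $e_1$). Since $r_\delta=(1,\delta\sigma(\delta),0)/\sqrt{1+\delta^2\sigma(\delta)^2}\to e_1$, we get $R_\delta(0)\to I$; and as $A_\delta\to0$, every derivative $\partial_{x_1}^jR_\delta=R_\delta(0)A_\delta^je^{A_\delta x_1}$ converges uniformly on $[0,L]$ to the corresponding derivative of the constant map $I$, i.e. $R_\delta\to I$ in $C^\infty([0,L],SO(3))$. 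It remains only to note that $A_\delta\neq0$ (indeed $A_\delta e_1=(0,0,-\delta^2\sigma(\delta))\neq0$), so each $R_\delta$ is a genuine non-straight helix, which concludes the construction.
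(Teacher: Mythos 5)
Your proof is correct and, at its core, follows the same route as the paper: both arguments use Proposition \ref{statiohelix} to reduce stationarity of a helix $R(0)e^{Ax_1}$ to a pointwise algebraic equation for the constant axial vector, both restrict to ``flat'' helices whose axis lies in the $e_1$--$e_2$ plane, and in fact both produce the same family of solutions --- substituting the paper's ansatz $\omega=\theta\,r_\delta$ with $r_\delta=(1-\delta)e_1+\sqrt{2\delta-\delta^2}\,e_2$ into your scalar equation $(\ddagger)$ recovers exactly the paper's quadratic $p_\delta(\theta)=0$. The one genuine difference is how the degeneracy at the straight configuration is resolved: the paper prescribes the unit axis $r_\delta$ and solves the resulting quadratic in the magnitude $\theta$ explicitly, checking that the roots are real and that the relevant one behaves like $\sqrt{\delta}$ (with a case distinction between $c_{13}=c_{23}$ and $c_{13}\neq c_{23}$ to select it), whereas you parametrize by $\omega_1=\delta$ and desingularize by the rescaling $\omega_2=\delta^2\sigma$, after which the implicit function theorem yields a smooth branch $\sigma(\delta)$ with $\sigma(0)=c_{13}k/f$. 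Your device avoids the case distinctions and root selection and directly exhibits the leading-order geometry $\omega_2\sim(c_{13}k/f)\,\omega_1^2$ of the branch, while the paper's computation gives $\theta_\delta$ in closed form; up to reparametrization the two constructions coincide, and both constitute complete proofs of the proposition.
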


\begin{proof}
Given $|\delta|<<1$ we consider $R_{\delta}(0)$ such that 
\begin{equation}\label{helicalbc}
r_{\delta}=R_{\delta}(0)^Te_1=(1-\delta)e_1+\sqrt{2\delta-\delta^2}e_2.
\end{equation} 
We look for a stationary point to $E_{0}^{f}$ under the ansatz $\omega_{A_{\delta}}=\theta_{\delta} r_{\delta}$ for some $\theta_{\delta}$ to be suitably chosen.
Such an ansatz is indeed motivated by the necessary condition in Proposition \ref{statiohelix}. We first notice that, under the assumption \eqref{helicalbc}, it holds that $R_{\delta}^T(x_1)e_1=R_{\delta}(0)^Te_1$. Moreover the differential equation (\ref{ELhelix}) becomes an algebraic equation:
\begin{equation}\label{algebraic}
\begin{pmatrix}
a_{12}(c_{13}(a_{13}-k)-c_{12}a_{13})\\
(c_{23}-c_{12})a_{12}a_{23}\\
a_{23}(c_{23}a_{13}-c_{13}(a_{13}-k))\end{pmatrix}=\begin{pmatrix} 0\\ 0\\ -f \sqrt{2\delta-\delta^2} \end{pmatrix}.
\end{equation}
where we have also taken into account that the third component of $r_{\delta}$ is zero by \eqref{helicalbc}. It is readily seen that the only condition on the coefficients fulfilling the first and second equation which are at the same time compatible with $f\neq 0$ in the third one, is $a_{12}=0$. 
By using the explicit expression of $\omega_{A_{\delta}}$ in \eqref{omegaR} (notice that here $\beta=0$ as observed in the proof of Proposition \eqref{statiohelix})
the third equation is satisfied if $\theta_{\delta}$ is a root of the following second order polynomial
\begin{equation*}
p_{\delta}(\theta)=(c_{23}-c_{13})\sqrt{2\delta-\delta^2}(1-\delta)\theta^2+c_{13}k(1-\delta)\theta-f\sqrt{2\delta-\delta^2}.
\end{equation*}
Since $k\neq 0$ the roots of the polynomial are  reals. The case $c_{13}=c_{23}$ leads to $\theta_{\delta}\simeq C\sqrt{\delta}\to 0$ as $\delta\to 0$. If instead  $c_{13}\neq c_{23}$ the former asymptotic behavior is satisfied by the root with smaller absolute value. Using \eqref{omegaR} the helix corresponding to  $\theta_{\delta}$ via the ansatz is such that $a^{\delta}_{13}, a^{\delta}_{23} = O(\theta_{\delta})$. Hence it converges to the straight configuration uniformly on $[0,L]$ as $\delta\to 0$. By the formula $R_{\delta}(x_1)=R_{\delta}(0)e^{A_{\delta}x_1}$ the same convergence holds for all derivatives.
\end{proof}

\begin{remark}\label{comment}
For $f=0$, one can still derive the algebraic system (\ref{algebraic}). As for a proper helix $a_{23}\neq 0$, we obtain $a_{13}=-\frac{c_{13}k}{c_{23}-c_{13}}$ whenever this is well-defined (otherwise there exists no helix). Note that this will not converge to the straight configuration. 

%If $r_3\neq 0$ and $f\neq 0$ (for $f=0$ see (i)), then $a_{12}\neq 0$, hence $a_{13}=\frac{c_{13}k}{c_{13}-c_{23}}$ whenever this is well-defined (otherwise no helix exists). Note that this will not converge to the straight configuration. Moreover, this determines $\theta=\frac{a_{13}}{r_2}$, so that $|\theta|\to +\infty$ when $r_2\to 0$. Thus $|a_{23}|\to +\infty$. Moreover, this solves the whole system if and only if
%	\begin{equation*}
%	f=\frac{r_1}{r_2}\frac{(kc_{13})^2}{(c_{13}-c_{12})^2}(c_{23}-c_{12}).
%	\end{equation*}
\end{remark}

Equipped with the results of Propositions \ref{howtosolve} and \ref{flathelix}, in what follows we investigate the stability of helical solutions in the vicinity of the straight configuration and $f$ close to $f^{crit}$ by testing numerically the condition $\det(M(t))\neq 0$ for all $t\in (0,L]$. We warn the reader that our numerical tests do not aim at providing a complete description of the physical phenomenon (as they are limited to small ranges of the parameters), but more to provide some insight on the dependence of the model on some of the parameters. 

In what follows we consider the case of a rectangular cross section $S=(-w_y,w_y)\times(-w_z,w_z)$ of fixed area $|S|=1$. We recall that in this case the torsion constant $\tau_{\scriptscriptstyle{S}}$ appearing in formula \eqref{quadraticpart} describing the quadratic part of the energy admits a series representation. Indeed a separation of variables ansatz leads to the following  formula for the torsion function $\varphi$ solving \eqref{torsionpde}:	
\begin{equation*}
\varphi(x_2,x_3)=x_2x_3-\sum_{n\geq 0}Z_n\sin(\zeta_nx_3)\sinh(\zeta_nx_{2}),
\end{equation*}
where $\zeta_n=\frac{(2n+1)\pi}{2w_z}$ and $Z_n=\frac{4(-1)^n}{w_z\zeta_n^3\cosh(\zeta_nw_y)}$. Differentiating inside the sum and using \eqref{torsionconstant} we calculate 
\begin{equation}\label{torsionalrig}
\tau_{\scriptscriptstyle{S}}=16w_yw_z^3\left(\frac{1}{3}-\frac{64w_z}{\pi^5w_y}\sum_{n\geq 0}\frac{\tanh(\zeta_nw_y)}{(2n+1)^5}\right).
\end{equation}
To perform the numerical computations we need to set the following free parameters of the model:
\begin{itemize}
	\item[] $\lambda,\mu$: Lam\'e constants of the material,
	\item[] $L,w_z$: Length of the rod and width of the rectangular cross section,
	\item[] $\chi$: Effective strength of the prestrain,
	\item[] $f$: External force along the $e_1$-direction,
	\item[] $\delta$: parameter in the boundary condition \eqref{helicalbc}.
\end{itemize}
In the case of a rectangular cross section these parameters determine analytically ${\mathbf C}$ and the given intrinsic curvature $k$ (we approximate the torsional rigidity with a suitable partial sum of the series in formula \eqref{torsionalrig}). To consider rubber materials, we choose the Lam\'e constants as $\lambda=0.326$ GPa and $\mu=0.654\cdot10^{-3}$ GPa (p. 78 in \cite{Sadd}) and we divide the energy by $\mu$ to introduce more stable scales. 
The functions $\xi_1(t)$, $\xi_2(t)$, and $\xi_3(t)$ entering in the definition of $M(t)$ can be calculated rewriting \eqref{secondorder} into the equivalent first-order linear system with matrix
\begin{equation}\label{gamma}
\Gamma=\begin{pmatrix} 0 &I\\ {\mathbf C}^{-1}B &{\mathbf C}^{-1}(D-D^T)\end{pmatrix}
\end{equation}

In the following we plot the values of the function $\Delta(t)$, defined as the smallest eigenvalue in modulus of the matrix $M(t)$, for $t\in (0,L]$. We recall that the helix has a strictly positive second variation (in the sense of Theorem \ref{mainlocal}) if and only if $\Delta(t)>0$ for all $t\in (0,L]$.\\
Before discussing the results of our computations, we have to warn the reader that considering a broader range of the parameters than we did involves the solution of some non-trivial numerical issues. For instance, when increasing the value of the prestrain $\chi$, the matrix $\Gamma$ in \eqref{gamma} turns out to have a large positive eigenvalue. Since the solution of \eqref{secondorder} involves combinations of exponential functions, this can heavily affect the accuracy of the computations. While a more complete analysis of the stability would require a delicate treatment for this problem, we decide to confine our plots to ranges of parameters where the issue does not appear, keeping ourself content with highlighting some qualitative behavior of the system.

The next plots single out two factors that can influence the stability of helical solutions, namely aspect ratio $w_{y}/w_{z}$ and intrinsic curvature $k$ (through the prestrain parameter $\chi$). By looking at the presence of conjugate points in the interval $(0,L]$ the following behavior is observed. \\

\noindent {\it Aspect ratio} (see Figure \ref{fig1}): as the aspect ratio $w_{y}/w_{z}=\frac{|S|}{4w_{z}^{2}}=\frac{1}{4w_{z}^{2}}$ increases conjugate points leave the interval $(0,L]$ showing phase transitions from unstable helices to stable helices. On the other hand, stability can be lost again for too big aspect ratios when not compensated by a big enough prestrain (see the case of $w_{z}=0.45$ and $\chi=10,15,20$ in Figure \ref{fig2}). \\

\noindent {\it Intrinsic curvature} (see Figure \ref{fig2}): a bigger prestrain $\chi$, which is to say by formula \eqref{curvature-prestrain} a bigger intrinsic curvature $k$, favors stability of helical solutions. \\
\begin{figure}[h]
\includegraphics[trim=5cm 9cm 2cm 9cm, scale=0.5]{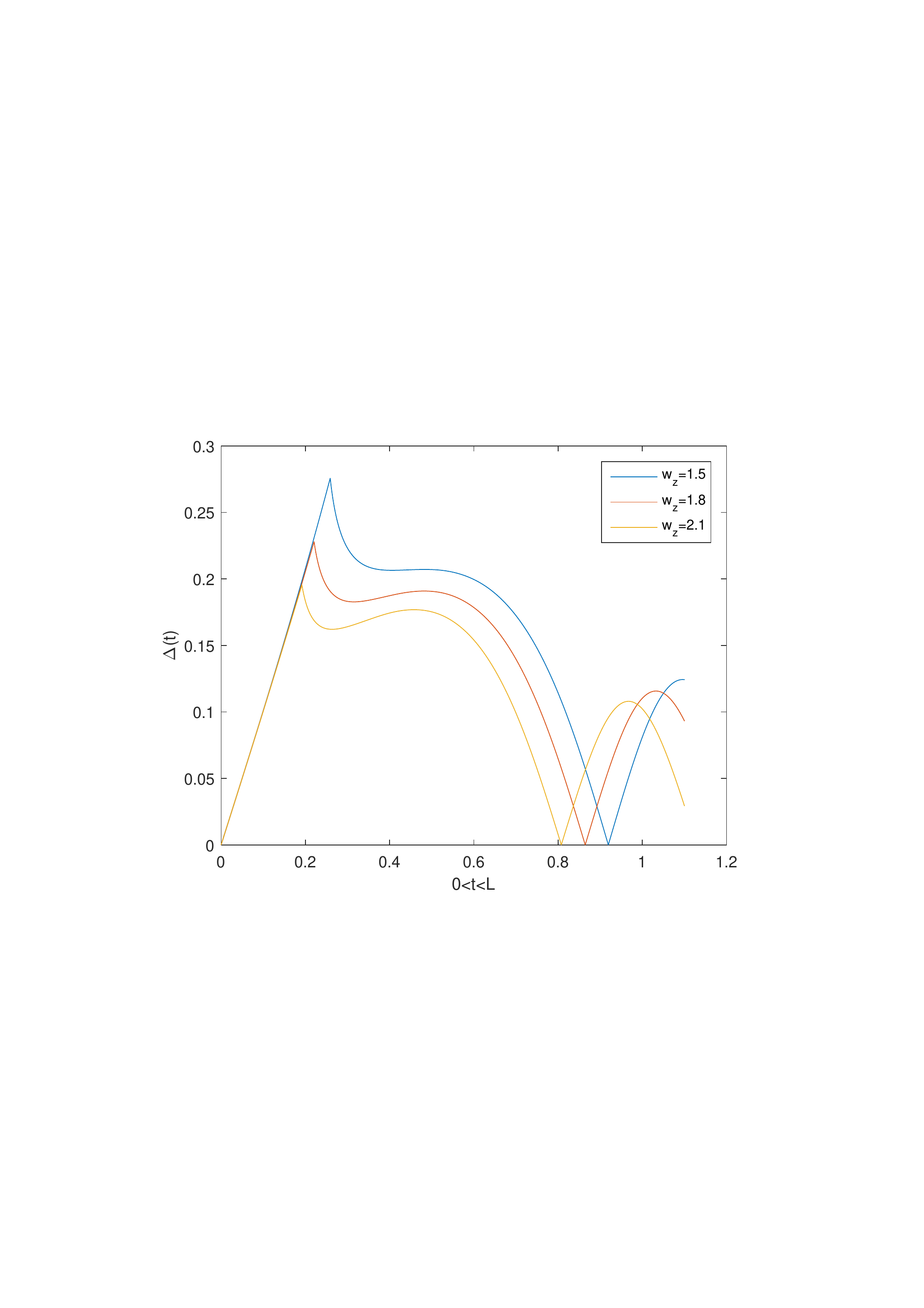}%
\includegraphics[trim=5cm 9cm 0cm 9cm, scale=0.5]{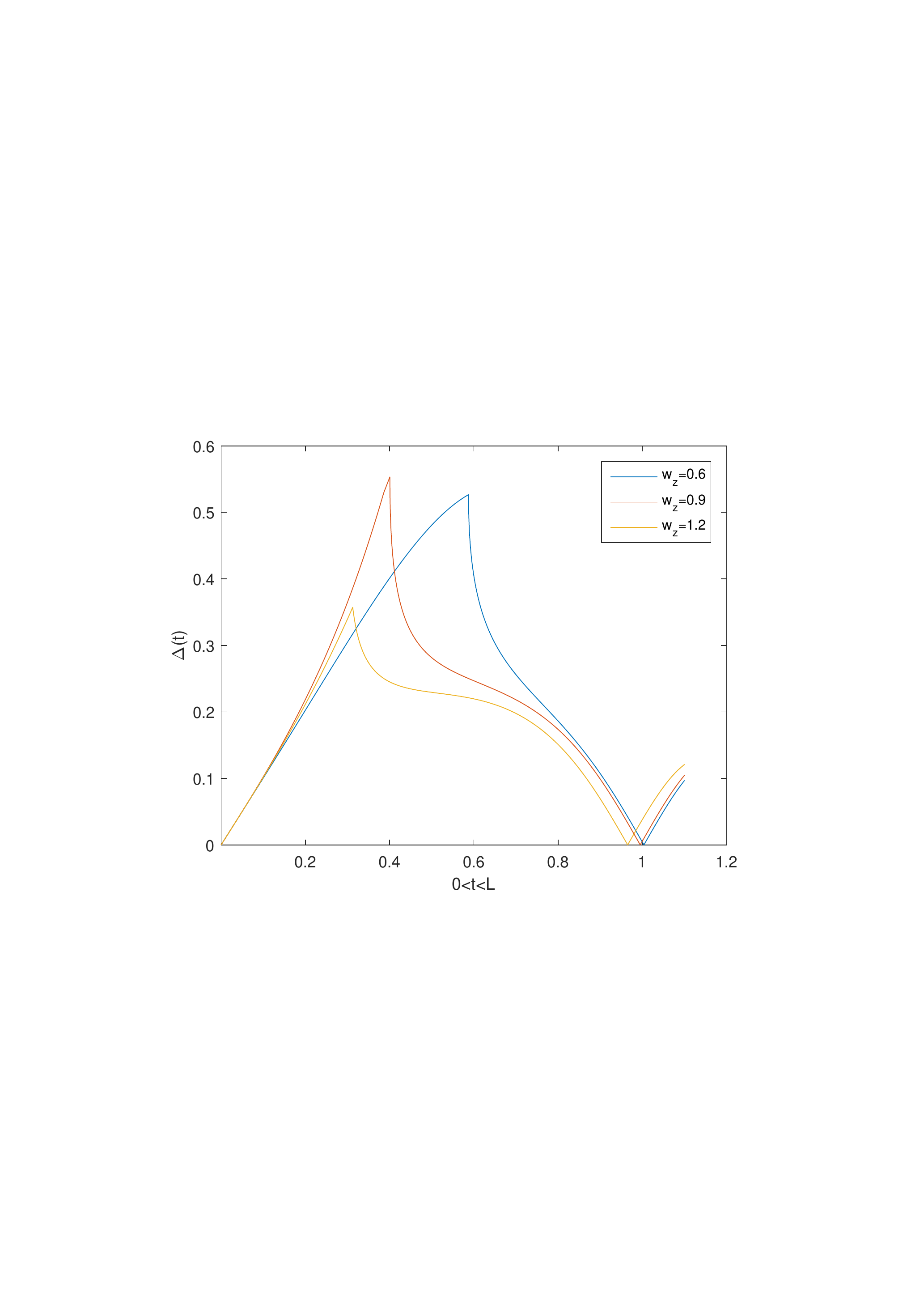}\\
\includegraphics[trim=5cm 9cm 2cm 9cm, scale=0.5]{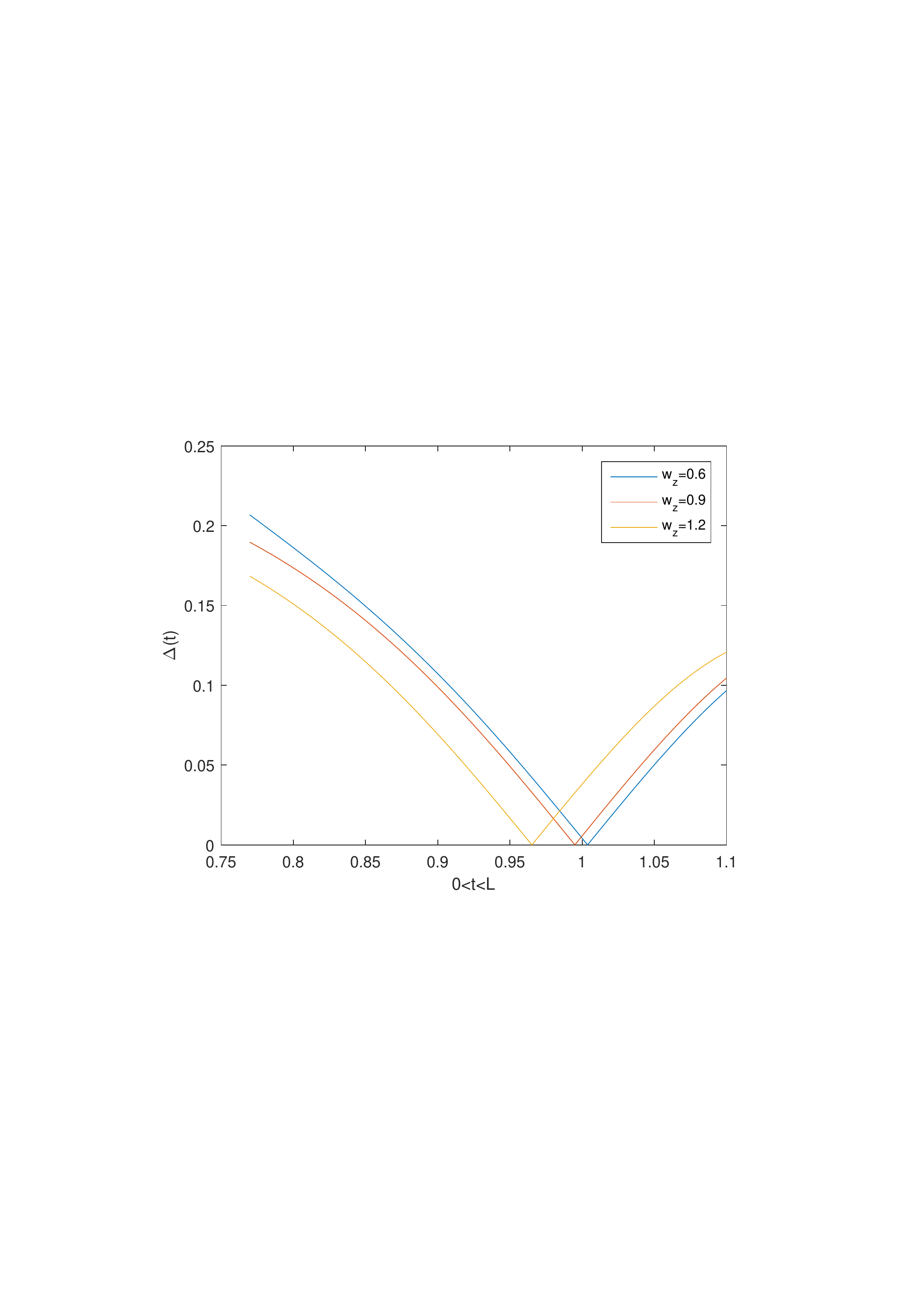}%
\caption{Numerical simulations of $\Delta(t)$ for the choice of parameters $L=1$, $\delta=0.05$, $f=0.999\cdot f^{crit}$, $\chi=6$ and $w_{z}$ decreasing from $2.1$ to $0.6$ (from left to right). As $w_{z}$ decreases (increasing aspect ratio) a conjugate point moves to the right end of the interval $(0,L]$, finally leaving it, thus showing the transition from unstable to stable helices. The bottom picture is a zoomed view of the second one.}
\label{fig1}
\end{figure}

\begin{figure}[h]
\includegraphics[trim=5cm 9cm 0cm 9cm, scale=0.6]{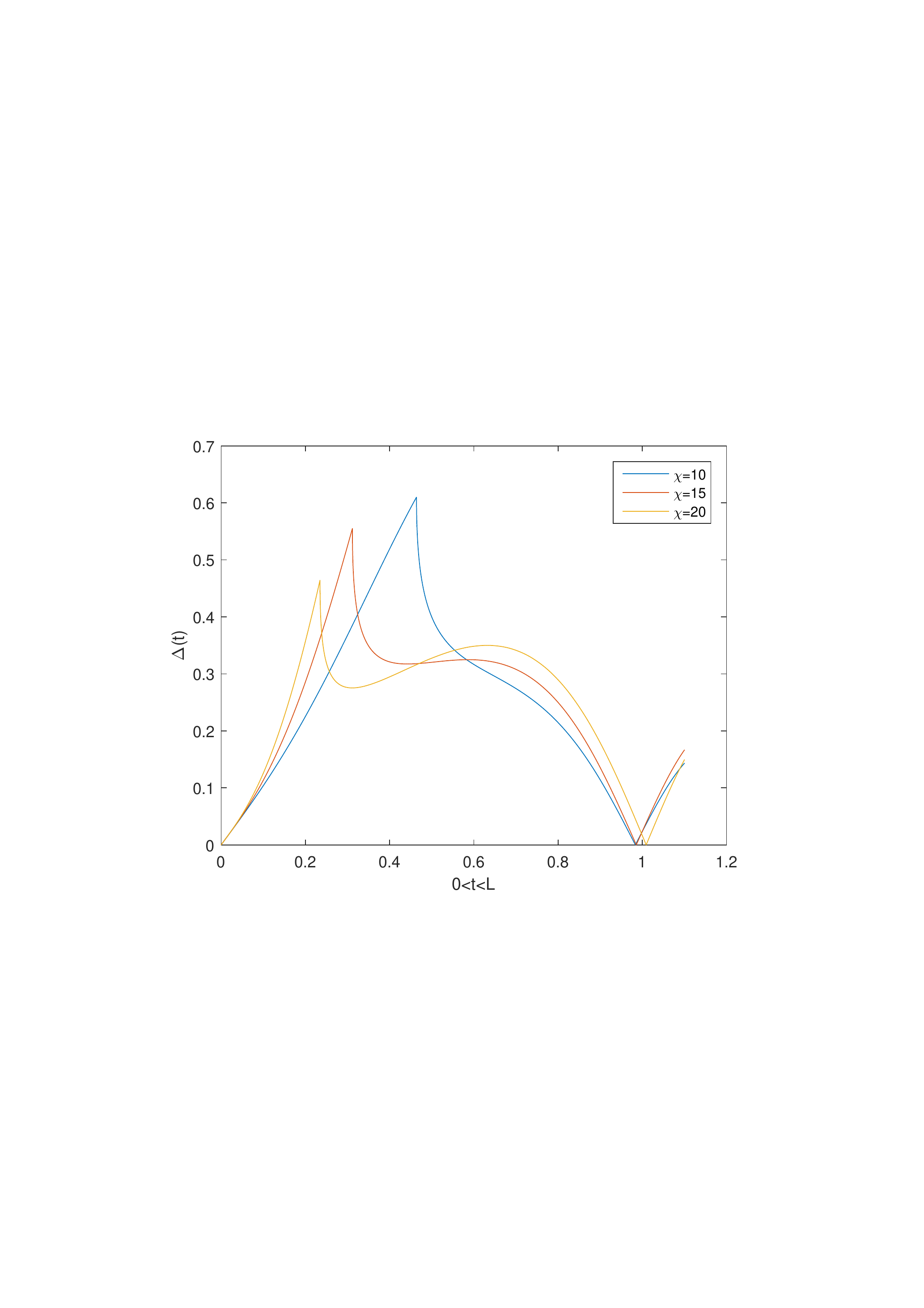}
\captionof{figure}{Numerical simulations of $\Delta(t)$ for the choice of parameters $L=1$, $\delta=0.05$, $f=0.999\cdot f^{crit}$, $w_{z}=0.45$. As the prestrain $\chi$ increases from $10$ to $20$, a conjugate point exits the interval $(0,L]$ leading to the stability of helical solutions.}
\label{fig2}
\end{figure}

\noindent {\bf Acknowledgement} The authors would like to thank G. Albi for some useful discussions on the numerical results contained in the paper.

\appendix
\section{}

Here we prove that the functions $(g,\a)$ can be assumed to be measurable with respect to the product $\sigma$-algebra. 

\begin{lemma}\label{measurable}
Let $A\in L^2((0,L),\mathbb{M}^{3\times 3}_{skew})$ and $\overline{A}\in L^{\infty}(\Omega,\mathbb{M}^{3\times 3})$. Then, for almost every $x_1\in (0,L)$ there exists a minimizer $\alpha_{x_1}$ of (\ref{Q2equiv}). Moreover one can choose $\overline{\a}\in L^2(\Omega,\R^3)$ such that $\partial_k\overline{\a}\in L^2(\Omega,\R^3)$ for $k=2,3$ and $\overline{\alpha}(x_1,\cdot)=\a_{x_1}$.
\end{lemma}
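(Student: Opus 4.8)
The plan is to recognize that, for almost every fixed $x_1$, the minimization in \eqref{Q2equiv} is a \emph{quadratic} problem whose quadratic part is independent of $x_1$, so that its minimizer depends on $x_1$ only through a linear, measurable forcing term; both the joint measurability on $\Omega$ and the integrability will then follow from abstract Hilbert-space arguments. First I would split the integrand: writing $P(x_1)(x_2,x_3):=\left(b(x_1,x_2,x_3)\mid 0\mid 0\right)+\overline A(x_1,x_2,x_3)$ with $b=A(x_1)(x_2e_2+x_3e_3)+g_{{\rm min}}(x_1)$, and $M(\a):=\left(0\mid \partial_2\a\mid\partial_3\a\right)$, the integrand equals $Q_3(P(x_1)+M(\a))$. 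Expanding $Q_3$ around $P(x_1)$ gives $F(x_1,\a)=a(\a,\a)+\ell(x_1)(\a)+c(x_1)$, where $a(\a,\a)=\int_SQ_3(M(\a))\,\mathrm{d}x_2\mathrm{d}x_3$ is independent of $x_1$, the term $\ell(x_1)(\a)=2\int_SD^2W(I)[{\rm sym}\,M(\a),{\rm sym}\,P(x_1)]\,\mathrm{d}x_2\mathrm{d}x_3$ is linear in $\a$, and $c(x_1)$ does not depend on $\a$. By non-degeneracy (ii) one has $Q_3(M)\geq c_0|{\rm sym}\,M|^2$, and a direct computation identifies the kernel $\mathcal R:=\{\rho\in W^{1,2}(S,\R^3):{\rm sym}\,M(\rho)=0\}$ as the four-dimensional space of infinitesimal rigid motions (the constants together with the in-plane rotation $(0,-x_3,x_2)$). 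On the closed complement $V_0:=\mathcal R^{\perp}$, Korn's and Poincar\'e's inequalities yield $a(\a,\a)\geq\kappa\|\a\|_{W^{1,2}(S)}^2$ with $\kappa>0$ independent of $x_1$; moreover, since $D^2W(I)$ depends only on symmetric parts, $\ell(x_1)$ annihilates $\mathcal R$, so the infimum is finite and attained on each coset $\a+\mathcal R$.

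Consequently, for a.e. $x_1$ there is a unique minimizer $\a_{x_1}\in V_0$, characterized by the Lax--Milgram identity $2a(\a_{x_1},\psi)=-\ell(x_1)(\psi)$ for all $\psi\in V_0$, equivalently $\a_{x_1}=-\tfrac12 L^{-1}\ell(x_1)$, where $L:V_0\to V_0^{*}$ is the fixed isomorphism induced by the $x_1$-independent form $a$. I would set $\overline{\a}(x_1,\cdot):=\a_{x_1}$ and verify the two claimed properties. For measurability, note that $x_1\mapsto P(x_1)\in L^2(S,\mathbb{M}^{3\times 3})$ is measurable (because $A\in L^2$, $g_{{\rm min}}\in L^{\infty}$ and $\overline A\in L^{\infty}(\Omega)$), hence $x_1\mapsto\ell(x_1)\in V_0^{*}$ is weakly, and therefore by Pettis' theorem strongly, measurable into the separable Hilbert space $V_0^{*}$; composing with the bounded operator $-\tfrac12 L^{-1}$ shows that $x_1\mapsto\a_{x_1}$ is strongly measurable into $W^{1,2}(S,\R^3)\hookrightarrow L^2(S,\R^3)$. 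Approximating this Bochner-measurable map by simple $L^2(S)$-valued functions and invoking the identification $L^2((0,L);L^2(S))\cong L^2(\Omega)$ yields joint measurability of $\overline{\a}$ on $\Omega$; applying the same argument to the bounded map $x_1\mapsto\partial_k\a_{x_1}$ gives the joint measurability of $\partial_k\overline{\a}$ for $k=2,3$.

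Finally, the integrability follows from the uniform coercivity of $a$: the estimate $\|\a_{x_1}\|_{W^{1,2}(S)}\leq C\|\ell(x_1)\|_{V_0^{*}}\leq C\big(|A(x_1)|+|g_{{\rm min}}(x_1)|+\|\overline A(x_1,\cdot)\|_{L^2(S)}\big)$ has a right-hand side that is square-integrable on $(0,L)$, since $A\in L^2$ and $\overline A,g_{{\rm min}}\in L^{\infty}$; integrating in $x_1$ gives $\int_0^L\|\a_{x_1}\|_{W^{1,2}(S)}^2\,\mathrm{d}x_1<\infty$, that is $\overline{\a}\in L^2(\Omega,\R^3)$ and $\partial_k\overline{\a}\in L^2(\Omega,\R^3)$ for $k=2,3$. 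The main obstacle is precisely the measurability step: the $x_1$-wise minimizers are a priori defined only pointwise and only up to the rigid-motion ambiguity, so one must ensure they glue into a single function measurable for the product $\sigma$-algebra on $\Omega$. Fixing the normalization $\a_{x_1}\in V_0$ removes the ambiguity and renders the selection genuinely single-valued, after which the Bochner/Pettis measurability of $x_1\mapsto\a_{x_1}$ combined with the identification $L^2((0,L);L^2(S))\cong L^2(\Omega)$ performs the gluing; an equivalent, more hands-on alternative is a Galerkin scheme in which the finite-dimensional minimizers depend measurably on $x_1$ through Cramer's rule and converge, for a.e. $x_1$, to $\a_{x_1}$ by the same uniform coercivity of $a$.
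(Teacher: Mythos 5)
Your proof is correct, and it reaches the joint measurability --- the real crux of the lemma --- by a genuinely different route than the paper. Both arguments share the same existence/uniqueness core: a complement of the four-dimensional kernel $\mathcal{R}$ of infinitesimal rigid motions on which Korn's inequality gives $x_1$-independent coercivity (your orthogonal complement $V_0=\mathcal{R}^{\perp}$ plays exactly the role of the paper's constrained space $\mathcal{M}=\{\alpha:\int_S\alpha=0,\ \int_S\partial_2\alpha_3-\partial_3\alpha_2=0\}$). The divergence is in how the fiberwise minimizers are glued into a jointly measurable function. You exploit the quadratic structure head-on: the minimizer is $-\tfrac12 L^{-1}\ell(x_1)$ with $L$ a fixed isomorphism, so weak measurability of $x_1\mapsto\ell(x_1)$ plus Pettis' theorem gives Bochner measurability, and the identification $L^2((0,L);L^2(S))\cong L^2(\Omega)$ finishes. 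The paper instead argues by hand: it first takes $A$ and $\overline{A}$ piecewise constant (so the fiberwise minimizer is piecewise constant in $x_1$, hence trivially jointly measurable), then for general data shows that for a.e.\ $x_1$ the fiberwise functionals $\Gamma$-converge in $L^2(S)$, so the approximate minimizers converge to the (unique) minimizer $\alpha_{x_1}$, and finally identifies the limit via weak $L^2(\Omega)$-compactness, Mazur's lemma and Fubini. Your argument is shorter and makes transparent that measurability is automatic once the solution operator is linear and bounded; the paper's approximation scheme avoids vector-valued integration theory entirely, is robust beyond the quadratic setting (it only uses stability of minimizers under $\Gamma$-convergence, not linearity of the solution map), and its construction is reused in Remark \ref{ext} to obtain extended solutions. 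One small step you should still spell out: that the fiberwise weak derivatives $\partial_k\alpha_{x_1}$, $k=2,3$, are also the distributional derivatives of $\overline{\alpha}$ on the product domain $\Omega$ --- a one-line Fubini argument, which the paper states explicitly.
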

\begin{proof}
Note first that it is convenient to restrict the admissible set to the subspace
\begin{equation*}
\mathcal{M}:=\{\alpha\in W^{1,2}(S,\R^3):\;\int_S\a=0,\,\int_S\partial_2\a_3-\partial_3\a_2=0\},
\end{equation*}
On $\mathcal{M}$ a minimizing pair exists since $Q_3(M)\geq c\,|{\rm sym}(M)|^2$, hence minimizing sequences are weakly compact in $W^{1,2}(S,\R^3)$ by Korn's inequality (note that $S$ is connected). The class $\mathcal{M}$ is closed by this convergence and by weak lower semicontinuity the limit is a minimizer. Since $Q_3$ is strictly convex on symmetric matrices it follows that the minimizer in the set $\mathcal{M}$ is unique.

Testing $\a=0$ in the infimum problem and using the fact that $Q_3(M)\leq C|M|^2$, we deduce that 
\begin{equation}\label{infbound}
Q_2(x_1,A(x_1))\leq C(|A(x_1)|^2+\|\overline{A}\|_{\infty}^2),
\end{equation}
while the lower bound $Q_3(M)\geq c|{\rm sym}(M)|^2$ together with the inequality $|a+b|^2\geq \frac{1}{2}|a|^2-|b|^2$ yields
\begin{equation}\label{infbound2}
Q_2(x_1,A(x_1))\geq c\|\nabla\a\|^2-\frac{1}{c}(|A(x_1)|^2+\|\overline{A}\|_{\infty}^2),
\end{equation}
where we have again used Korn's inequality for a minimizer $(\a_2,\a_3)$. Combining (\ref{infbound}) and (\ref{infbound2}) and the Poincar\'e inequality we find that
\begin{equation}\label{control}
\|\a\|_{W^{1,2}}^2\leq C(|A(x_1)|^2+\|\overline{A}\|_{\infty}^2).
\end{equation}
We now prove joint measurability. For the moment assume that $A$ and $\overline{A}$ are both piecewise constant (to be more precise we consider the case where $A$ is constant on intervals of the form $k/n+(0,1/n)$ with $k\in\mathbb{Z}$ and $\overline{A}$ is constant on cells of the form $z/n+(0,1/n)^3$ with $z\in\mathbb{Z}^3$). Then the minimizer in $\mathcal{M}$ constructed for every $x_1\in (0,L)$ will be piecewise constant in $x_1$, too. Hence in this case $\alpha$ and the weak derivatives are indeed jointly measurable. Integrating (\ref{control}) we obtain that $\a\in L^2(\Omega)$. Notice also that the pointwise obtained weak derivative of $\a(x_1,\cdot)\in W^{1,2}(S,\R^3)$ is also the weak derivative of $\a\in L^2(\Omega,\R^3)$ by Fubini's theorem, hence $\partial_k\a\in L^2(\Omega,\R^3)$ for $k=2,3$.

We conclude with an approximation argument. Let $A_n\in L^2((0,L),\mathbb{M}^{3\times 3}_{skew})$ and $\overline{A}_n\in L^{\infty}(\Omega,\mathbb{R}^{3\times 3})$ be piecewise constant functions converging to $A$ and $\overline{A}$ respectively in the $L^2$-norm and pointwise almost everywhere. We can assume that $\|\overline{A}_n\|_{\infty}\leq\|\overline{A}\|_{\infty}$. For each $n\in\mathbb{N}$ and every $x_1\in(0,L)$ let $\a_n(x_1,\cdot)\in\mathcal{M}$ be the solution defining $Q_2(x_1,A_n(x_1))$. By (\ref{control}) and the Sobolev embedding theorem we may assume that $\a_n(x_1,\cdot)\to \a(x_1,\cdot)$ in $L^2(S)$ for almost every $x_1\in (0,L)$. Note that indeed the whole sequence converges. This follows from the fact that for almost every $x_1\in (0,L)$ the functionals
\begin{equation*}
F_n(\a):=
\begin{cases}
\int_SQ_3\left(\left(\begin{array}{c|c|c} A_n(x_1)(x_2e_2+x_3e_3)+g_n(x_1) &\partial_2\a&\partial_3\a\end{array}\right)+\overline{A}_n(x)\right)\,\mathrm{d}x &\mbox{if $\a\in\mathcal{M}$,}\\
+\infty &\mbox{otherwise}
\end{cases}
\end{equation*}
$\Gamma$-converge with respect to the strong topology on $L^2(S,\R^3)$ to the functional 
\begin{equation*}
F(\a):=
\begin{cases}
\int_SQ_3\left(\left(\begin{array}{c|c|c} A(x_1)(x_2e_2+x_3e_3)+ g(x_1) &\partial_2\a&\partial_3\a\end{array}\right)+\overline{A}(x)\right)\,\mathrm{d}x &\mbox{if $\a\in\mathcal{M}$,}\\
+\infty &\mbox{otherwise.}
\end{cases}
\end{equation*}
To conclude the measurability of $\a$ note that due to (\ref{control}), up to subsequences we have $\a_n\rightharpoonup \overline{\a}$ in $L^2(\Omega,\R^3)$. Then there exists a convex combination of the $\a_n$ that converges strongly to $\overline{\a}$ in $L^2(\Omega,\R^3)$. From Fubini's theorem we deduce that for almost every $x_1\in (0,L)$ it holds that $\overline{\a}(x_1,\cdot)=\a(x_1,\cdot)$. The measurability of the partial derivatives for $k=2,3$ can be proven the same way.
\end{proof}

\begin{remark}\label{ext}
If we consider a bounded extension of $A$ to $\R$ and $\overline{A}$ to $\R\times S$ and then extend the solutions obtained for the piecewise constant approximations in the proof of Lemma \ref{measurable} for fixed $x_1\in \R$ such that $\a_n(x_1,\cdot)\in W^{1,2}(\R^2,\R^3)$ in such a way that (\ref{control}) still holds, then we can prove that $\overline\a\in L^2(\Omega^{\prime},\R^3)$ and $\partial_k\overline\a\in L^2(\Omega^{\prime},\R^3)$ for $k=2,3$ with $\Omega\subset\subset\Omega^{\prime}$.
\end{remark}
Here below we prove a compatibility property of the projection on $SO(3)$ with the weak clamping boundary conditions.

\begin{lemma}\label{projection}
Assume that $M\in\mathbb{M}^{3\times 3}$ is such that $\det(M)>0$ and $Me_1=e_1$ as well as $\langle Me_2,e_1\rangle=\langle Me_3,e_1\rangle=0$. Then $\Pi_3(M)e_1=e_1$.
\end{lemma}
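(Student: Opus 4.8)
The plan is to identify the projection $\Pi_3$ with the polar factor of $M$ and then to exploit the block structure that the three hypotheses impose. Recall that on the neighborhood $U_0$ of $SO(3)$ where $\Pi_3$ is defined, the nearest-point projection onto $SO(3)$ coincides with the polar decomposition: for $M\in U_0$ with $\det M>0$ one writes $M=RU$ with $R=\Pi_3(M)\in SO(3)$ and $U=(M^TM)^{1/2}$ the symmetric positive-definite square root, so that $\Pi_3(M)=MU^{-1}$. Since the hypotheses are tailored to an $M$ lying in this neighborhood (in the intended application $M=I+\varphi$ with $\varphi$ small), this identification is the only analytic input needed.

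First I would translate the three conditions into a statement about the entries of $M$. The condition $Me_1=e_1$ fixes the first column to be $(1,0,0)^T$, while $\langle Me_2,e_1\rangle=\langle Me_3,e_1\rangle=0$ force $m_{12}=m_{13}=0$, so the first row is $(1,0,0)$ as well. Hence, with respect to the orthogonal splitting $\R^3=\R e_1\oplus e_1^\perp$ with $e_1^\perp=\mathrm{span}\{e_2,e_3\}$, the matrix is block diagonal,
\[
M=\begin{pmatrix} 1 & 0 \\ 0 & N \end{pmatrix},
\]
where $N$ represents the restriction of $M$ to $e_1^\perp$ and $\det N=\det M>0$.

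The key (and essentially only) computation is then that the polar factor respects this block decomposition. Since $M^TM=\mathrm{diag}(1,N^TN)$ is block diagonal, so are its square root $U=\mathrm{diag}\bigl(1,(N^TN)^{1/2}\bigr)$ and its inverse $U^{-1}$. Therefore
\[
\Pi_3(M)=MU^{-1}=\begin{pmatrix} 1 & 0 \\ 0 & N(N^TN)^{-1/2} \end{pmatrix},
\]
and the block $N(N^TN)^{-1/2}\in SO(2)$ is precisely the polar factor of $N$ (using $\det N>0$). In particular the first column of $\Pi_3(M)$ is $e_1$, which gives $\Pi_3(M)e_1=e_1$, as claimed.

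I do not expect any genuine obstacle here: the content is the commutation of the square root with the block decomposition, which is immediate once the block structure is observed. The only point requiring a word of care is the identification of $\Pi_3$ with the polar factor; this is legitimate because on $U_0$ the nearest rotation is unique and equals $M(M^TM)^{-1/2}$, so no additional argument for well-posedness of $\Pi_3(M)$ is needed beyond $\det M>0$ and $M\in U_0$.
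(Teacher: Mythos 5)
Your proof is correct and takes essentially the same approach as the paper: both identify $\Pi_3(M)$ with the orthogonal polar factor of $M$ (valid since $\det M>0$) and exploit the block-diagonal structure $M=\mathrm{diag}(1,N)$ forced by the hypotheses. The only cosmetic difference is that the paper block-extends the polar decomposition of the $2\times 2$ submatrix and invokes uniqueness of the polar decomposition, whereas you compute $M(M^TM)^{-1/2}$ directly using that the positive-definite square root of a block-diagonal matrix is block-diagonal; these rest on the same uniqueness fact.
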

\begin{proof}
Let $M=UP$ be the unique polar decomposition of $M$. It is well known that $\dist(M,O(3))=|U-M|$ (see \cite{NH}). As $\det(M)>0$ we have $U\in SO(3)$, hence $\Pi_3(M)=U$. Setting $M^{11}\in\mathbb{M}^{2\times 2}$ as the submatrix of $M$ where the first column and row have been removed, we have $\det(M^{11})>0$ and thus there exists also a unique polar decomposition $M^{11}=\tilde{U}\tilde{P}$ with $\tilde{U}\in SO(2)$. It is now easy to verify that 
\begin{equation*}
M=\begin{pmatrix} 1&0&0\\0& \tilde{u}_{11}& \tilde{u}_{12}\\
0 &\tilde{u}_{21} &\tilde{u}_{22}\end{pmatrix}\begin{pmatrix}
 1&0&0\\0& \tilde{p}_{11}& \tilde{p}_{12}\\
 0 &\tilde{p}_{21} &\tilde{p}_{22}
\end{pmatrix},
\end{equation*}
hence by uniqueness $Ue_1=e_1$.
\end{proof}

\end{document}